\documentclass[1 [leqno,11pt]{amsart}
\usepackage{amssymb, amsmath}
\allowdisplaybreaks[4]
\usepackage{xcolor}
\usepackage[hidelinks]{hyperref}
\hypersetup{colorlinks={true},linkcolor={blue},citecolor=blue}

\let\f=\frac

\let\wt=\widetilde

\let\D=\Delta
\let\p=\partial

\def\ka{\kappa}

\def\v{{\rm v}}

\def\ff{\frak{f}}
\def\fg{\frak{g}}

\def\oo{\infty}
\def\ff{\frak{f}}
\def\B{{\rm B}}

\def\rC{{\rm C}}

\def\Z{\mathop{\mathbb Z\kern 0pt}\nolimits}
\def\N{\mathop{\mathbb N\kern 0pt}\nolimits}
\def\Q{\mathop{\mathbb Q\kern 0pt}\nolimits}
\def\R{\mathop{\mathbb R\kern 0pt}\nolimits}

\def\dive{{\mathop{\rm div}\nolimits}\,}

\def\A{{\rm A}}
\def\h{{\rm h}}

\def\vv{{\rm v}}
\def\fg{\frak{g}}
\def\na{\nabla}
\def\rb{{\rm b}}

\def\dB{\dot{B}}
\def\dH{\dot{H}}
\def\dDv{\dot{\Delta}^\vv}
\def\dSv{\dot{S}^\vv}
\def\dTv{\dot{T}^\vv}
\def\dRv{\dot{R}^\vv}

\def\dBl{\dot{B}_{{\rm lo}}}
\def\dHh{\dot{H}_{{\rm hi}}}

\def\eqdefa{\buildrel\hbox{\footnotesize def}\over =}

\newcommand{\Rmnum}[1]{\uppercase\expandafter{\romannumeral #1} }

\newcommand{\beq}{\begin{equation}}
\newcommand{\eeq}{\end{equation}}
\newcommand{\ben}{\begin{eqnarray}}
\newcommand{\een}{\end{eqnarray}}
\newcommand{\beno}{\begin{eqnarray*}}
\newcommand{\eeno}{\end{eqnarray*}}

 \numberwithin{equation}{section}
\newcommand{\andf}{\quad\hbox{and}\quad}
\newcommand{\with}{\quad\hbox{with}\quad}

\newtheorem{defi}{Definition}[section]
\newtheorem{thm}{Theorem}[section]
\newtheorem{lem}{Lemma}[section]
\newtheorem{rmk}{Remark}[section]

\newtheorem{prop}{Proposition}[section]

\begin{document}

\title[Stability of solutions to the 3-D anisotropic Navier-Stokes equations]
{On the stability of solutions to the 3-D anisotropic Navier-Stokes equations in the critical space}

\author[N. Burq]{Nicolas Burq}
\address[N. Burq]
{Laboratoire de Math\'ematiques d'Orsay, Universit\'e Paris-Saclay, Orsay, France, CNRS, UMR 8628 $\&$ Institut Universitaire de
France.}
\email{nicolas.burq@universite-paris-saclay.fr}
\author[N. Liu]{Ning Liu}
\address[N. Liu]
 {Department of Mathematics, New York University Abu Dhabi, Saadiyat Island, P.O. Box 129188, Abu Dhabi, United Arab Emirates.} \email{liuning09281@126.com $\&$ nl2983@nyu.edu  }
\author[P. Zhang]{Ping Zhang}%
\address[P. Zhang]
 {State Key Laboratory of Mathematical Sciences, Academy of Mathematics $\&$ Systems Science, The Chinese Academy of
	Sciences, Beijing 100190, China, and School of Mathematical Sciences,
	University of Chinese Academy of Sciences, Beijing 100049, China. }
\email{zp@amss.ac.cn}

\date{\today}

\begin{abstract} 
In this paper, we study the continuous dependence of solutions to the three-dimensional incompressible anisotropic Navier–Stokes equations $(ANS)$ with initial data in the critical space \smash{$\dot{B}^{0,\frac12}$}. We prove that the data-to-solution map is continuous in this setting. Due to the critical regularity of the problem,  the classical Bona-Smith argument~\cite {BoSm} (see also the abstract result~\cite{ABITZ}) does not appear to apply because a key ingredient (the Lipschitz continuity at lower regularity) does not appear to hold. As a consequence, we develop a new direct Fourier-weighted method within the Besov framework.
\end{abstract}
\maketitle

\noindent {\sl Keywords:} Anisotropic Navier-Stokes equations, anisotropic Littlewood-Paley theory, continuous dependence, critical spaces.

\vskip 0.2cm
\noindent {\sl AMS Subject Classification (2000):} 35Q30, 76D03

\setcounter{equation}{0}

\section{Introduction}
In this paper, we investigate the continuity of the data-to-solution map for the following three-dimensional incompressible anisotropic Navier–Stokes equations with initial data in the critical Besov space:
\begin{equation*}
(ANS)\quad \left\{
\begin{array}{l}
\displaystyle \partial_t u + u \cdot \nabla u - \Delta_{\mathrm{h}} u = -\nabla p, \qquad (t,x) \in \mathbb{R}^+ \times \mathbb{R}^3, \\[4pt]
\displaystyle \operatorname{div} u = 0, \\[4pt]
\displaystyle u|_{t=0} = u_0,
\end{array}
\right.
\end{equation*}
where $\Delta_{\mathrm{h}} \eqdefa \partial_1^2 + \partial_2^2$, $u$ denotes the velocity of the fluid, and $p$ is the scalar pressure function that enforces the divergence-free condition on the velocity field. We refer to \cite{CDGG, Pedlovsky} for the physical background of this system.

\smallskip

Let us first recall the following anisotropic Sobolev spaces.

\begin{defi}\label{S1def1}
For $s, s' \in \mathbb{R}$, we define the anisotropic Sobolev space $H^{s,s'}(\mathbb{R}^3)$ to be the space of tempered distributions $f$ such that
\[
\|f\|_{H^{s,s'}} \eqdefa \| \langle \xi_{\mathrm{h}} \rangle^s \langle \xi_3 \rangle^{s'} \widehat{f}(\xi) \|_{L^2} < \infty,
\]
where $\widehat{f}$ denotes the Fourier transform of $f$ and $\xi_{\mathrm{h}} = (\xi_1, \xi_2)$. Here and throughout the paper, we always  denote $\langle \xi \rangle \eqdefa (1 + |\xi|^2)^{\frac12}$.
\end{defi}

For initial data in $H^{0,s}$ with $s \in (1/2, 1)$, Chemin et al. \cite{CDGG} first proved that system $(ANS)$ has a local solution $u \in C([0, T^*); H^{0,s})$ with $\nabla_{\mathrm{h}} u \in L^2(0, T^*; H^{0,s})$ for some maximal existence time $T^*$. If, in addition,
\begin{equation}\label{smallCDGG}
\|u_0\|_{L^2}^{1 - \frac{1}{2s}} \|u_0\|_{\dot{H}^{0,s}}^{\frac{1}{2s}} \leq c
\end{equation}
for some sufficiently small constant $c$, then $T^* = \infty$. Nevertheless, due to the lack of dissipation in the vertical variable in $(ANS),$  the authors \cite{CDGG} {required} one more derivative in the $x_3$ direction to prove uniqueness of such a solution. Iftimie \cite{Ift} later resolved the uniqueness issue for $(ANS)$ with initial data in $H^{0,s}$ for $s > \frac12$.

Notice that, just as for the classical Navier–Stokes system, system $(ANS)$ possesses the following scaling invariance property:
\begin{equation}\label{NSscaling}
u_\lambda(t,x) \eqdefa \lambda u(\lambda^2 t, \lambda x), \qquad u_{0,\lambda}(x) \eqdefa \lambda u_0(\lambda x),
\end{equation}
which means that if $u$ is a solution of $(ANS)$ on $[0,T]$ with initial data $u_0$, then $u_\lambda$ defined by \eqref{NSscaling} is also a solution of $(ANS)$ on $[0, T/\lambda^2]$ with initial data $u_{0,\lambda}$. The scaling \eqref{NSscaling} determines the minimal regularity required of the initial data for local well-posedness of $(ANS)$. A function space is called critical if its norm is invariant under the scaling transformation $u_0 \mapsto u_{0,\lambda}$.

For the convenience of the reader, we recall the anisotropic dyadic operators from \cite{BCD}. Let $\chi(\tau)$ and $\varphi(\tau)$ be smooth functions such that
\begin{align*}
&\operatorname{Supp} \varphi \subset \Bigl\{ \tau \in \mathbb{R} : \frac34 \leq |\tau| \leq \frac83 \Bigr\}, \quad \text{and} \quad \forall \tau > 0,\ \sum_{j \in \mathbb{Z}} \varphi(2^{-j} \tau) = 1; \\
&\operatorname{Supp} \chi \subset \Bigl\{ \tau \in \mathbb{R} : |\tau| \leq \frac43 \Bigr\}, \quad \text{and} \quad \forall \tau \geq 0,\ \chi(\tau) + \sum_{j \geq 0} \varphi(2^{-j} \tau) = 1,
\end{align*}
and define the anisotropic  dyadic operators as follows:
\begin{equation}\label{S1eq1}
\begin{split}
&\dot{\Delta}_{j}^{\mathrm{v}} a \eqdefa \mathcal{F}^{-1}\bigl( \varphi(2^{-j} |\xi_3|) \widehat{a} \bigr)\andf 
\dot{S}_{j}^{\mathrm{v}} a \eqdefa \mathcal{F}^{-1}\bigl( \chi(2^{-j} |\xi_3|) \widehat{a} \bigr), \quad \text{for } j \in \mathbb{Z},
\end{split}
\end{equation}
where $\mathcal{F}^{-1} a$ denotes the inverse Fourier transform of $a$.

\begin{defi}\label{anibesov}
We define $\dot{B}^{0,\frac12}(\mathbb{R}^3)$ as the set of homogeneous tempered distributions $a$ such that
\[
\|a\|_{\dot{B}^{0,\frac12}} \eqdefa \sum_{j \in \mathbb{Z}} 2^{\frac{j}{2}} \| \dot{\Delta}_{j}^{\mathrm{v}} a \|_{L^2} < \infty.
\]
\end{defi}

Paicu \cite{Paicu} proved the local existence of solutions to $(ANS)$ for any solenoidal vector field $u_0 \in \dot{B}^{0,\frac12}$, as well as global existence for sufficiently small initial data in $\dot{B}^{0,\frac12}$. Chemin and the third author \cite{CZ07} introduced the critical anisotropic Besov space with negative index, $\mathcal{B}^{-\frac12,\frac12}_4$, and established global existence of solutions to $(ANS)$ for sufficiently small initial data in that space. We mention that, due to the special structure of the solutions constructed in \cite{Paicu, CZ07}, uniqueness was obtained by establishing $H^{0,-\frac12}$ estimates for the difference between two solutions. We refer to \cite{LPZ} and the references therein for further well-posedness results for $(ANS).$  We emphasize that in all these works, the existence and uniqueness spaces differ, due to the lack of dissipation in the vertical variable.

On the other hand, we recall that the classical notion of well-posedness in the sense of Hadamard for a Cauchy problem on $[0,T]$ requires
\begin{itemize}
\item[(1)] existence and uniqueness of the solution on $[0,T]$;
\item[(2)] continuous dependence of the solution on the initial data.
\end{itemize}
For the classical Navier–Stokes equations, the data-to-solution map is Lipschitz continuous with respect to the initial data (see \cite{fujitakato}). The third author and Zhu \cite{ZZhu1} proved that solutions of $(ANS)$ depend continuously on the initial data in $H^{0,s}$ for $s > \frac12$, yet the data-to-solution map is not uniformly continuous on bounded subset of  $H^{0,s}$ with  $s > \frac12$. Prior to the present work, continuity in the critical space $\dot{B}^{0,\frac12}$ remained an open problem.

The goal of this paper is to establish the continuity of the data-to-solution map in the critical spaces. Our main result gives the continuity in the space $\dot{B}^{0,\frac12}$:

\begin{thm} \label{thm2}
{\sl Let $u^\infty$ be a strong solution of $(ANS)$ on $[0,T]$ with initial data $u^\infty_0 \in \dot{B}^{0,\frac12}$, and let $\{ u^n_0 \}_{n \in \mathbb{N}}$ be a sequence of functions in $\dot{B}^{0,\frac12}$. If $u^n_0$ converges to $u^\infty_0$ in $\dot{B}^{0,\frac12}$ as $n \to \infty$, then there exists $N \in \mathbb{N}$ such that for all $n > N$, there exists a unique solution $u^n$ of $(ANS)$ on $[0,T]$ with initial data $u^n_0$, and
\begin{equation}\label{S1eq4}
\lim_{n \to \infty} \Bigl( \|(u^n - u^\infty)\|_{L^\infty_T(\dot{B}^{0,\frac12})} + \|\nabla_{\mathrm{h}} (u^n - u^\infty)\|_{L^2_T(\dot{B}^{0,\frac12})} \Bigr) = 0.
\end{equation}}
\end{thm}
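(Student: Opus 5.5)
The plan is to work directly with the difference $w^n\eqdefa u^n-u^\infty$ and to estimate it in $\dot B^{0,\frac12}$ itself, not at lower regularity. To control the low horizontal regularity, we use a time-dependent Fourier weight built from $u^\infty$, as in Paicu's existence theory.

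\emph{Step 1: preparation.} Because $u^\infty$ is a strong solution on $[0,T]$, we have $u^\infty\in C([0,T];\dot B^{0,\frac12})$ and $\nabla_{\rm h}u^\infty\in L^2_T(\dot B^{0,\frac12})$. By absolute continuity of the integral and summability in $j$, for every $\varepsilon>0$ there is a splitting of $u^\infty$ into two pieces:
\begin{itemize}
\item a vertical low/high-frequency part, measured via $\sum_{|j|>J}2^{j/2}\|\dDv_j u^\infty\|_{L^\infty_T(L^2)}$ together with $\bigl(\sum_{|j|>J}2^{j/2}\|\dDv_j\nabla_{\rm h}u^\infty\|_{L^2_T(L^2)}\bigr)$, which is smaller than $\varepsilon$;
\item a smooth remainder $\bar u$, spectrally localized in $2^{-J}\le|\xi_3|\le 2^J$ and of horizontal frequencies $\le 2^K$.
\end{itemize}
We then introduce the weight $f(t)\eqdefa \int_0^t\|\nabla_{\rm h}u^\infty(s)\|^2_{\dot B^{0,\frac12}}\,ds$, which is finite, and set $w_\lambda\eqdefa w^n e^{-\lambda f(t)}$. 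The same is done for the localized pieces, whose norms in stronger spaces, such as $L^2_T(L^\infty_{\rm h}(L^\infty_\vv))$ for $\bar u$, are bounded by constants $C_{\varepsilon}$.

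\emph{Step 2: the a priori estimate.} Applying $\dDv_j$ to the equation for $w^n$ gives
\[
\partial_t w+\mathbb P\bigl(u^\infty\cdot\nabla w+w\cdot\nabla u^\infty+w\cdot\nabla w\bigr)-\Delta_{\rm h}w=0.
\]
We take the $L^2$ inner product with $\dDv_j w_\lambda$ and use the anisotropic Bony decomposition together with the standard trilinear lemmas from Paicu and Chemin--Zhang, which control $\int_0^t|(\dDv_j(a\cdot\nabla b)\,|\,\dDv_j b)|$ by $d_j2^{-j}\|a\|\,\|b\|$-type quantities. The goal is the inequality
\[
\|w_\lambda\|_{\wt L^\infty_t(\dot B^{0,\frac12})}+\|\nabla_{\rm h}w_\lambda\|_{\wt L^2_t(\dot B^{0,\frac12})}\le \|w_0\|_{\dot B^{0,\frac12}}+\Bigl(\tfrac{C}{\sqrt\lambda}+C\varepsilon\Bigr)\|w_\lambda\|_{X_t}+C\|w\|_{X_t}^2 .
\]
The contribution of the high/low part of $u^\infty$ is absorbed through smallness $\varepsilon$. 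The contribution of the smooth part $\bar u$ is absorbed by the weight, after a further weight $\exp(-\lambda\int_0^t C_\varepsilon\|\bar u\|^2)$. The transport term $u^\infty\cdot\nabla w$ with $\partial_3$ landing on $w$ is handled using $\dive u=0$ and commutator estimates, since no vertical dissipation is available.

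\emph{Step 3: conclusion by continuity.} Choose $\varepsilon$ first, then $\lambda$, so that the linear terms are absorbed. For $n$ large, $\|w_0^n\|_{\dot B^{0,\frac12}}\le\eta$ with $\eta e^{\lambda(f(T)+C_\varepsilon)}$ small. A continuity argument then gives that the local solution $u^n$ from Paicu's theory extends to $[0,T]$ and satisfies
\[
\|w^n\|_{X_T}\le C e^{\lambda(f(T)+C_\varepsilon)}\|w^n_0\|_{\dot B^{0,\frac12}}\to0 ,
\]
which is \eqref{S1eq4}. Uniqueness follows from the known $H^{0,-\frac12}$ uniqueness argument, or directly from the same estimate.

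\emph{Main obstacle.} The expected difficulty is the term $(\dDv_j(w\cdot\nabla u^\infty)\,|\,\dDv_j w)$, and in particular $w^3\partial_3 u^\infty$. Here a vertical derivative falls on $u^\infty$, which is only in $\dot B^{0,\frac12}$ vertically. This term cannot be closed at lower regularity, which is why Bona--Smith fails. I would first rewrite $w^3=-\int\dive_{\rm h}w^{\rm h}\,dx_3$-type (using $\partial_3w^3=-\dive_{\rm h}w^{\rm h}$), trading $\partial_3$ for $\nabla_{\rm h}$ on $w$, and use the frequency splitting of $u^\infty$ so that its high vertical frequencies are small in norm. The remaining localized part gains $\partial_3$ at a cost of $2^J$, and that cost is paid by the time weight.
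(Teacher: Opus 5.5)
\textbf{There is a genuine gap in Step 2.} The inequality you set as the goal cannot be closed, and it fails at exactly the term you flag as the main obstacle.

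Take the part $u^\infty-\bar u$ of $u^\infty$ that carries the high vertical frequencies. The paraproduct $\dTv_{w^3}\partial_3(u^\infty-\bar u)$ contributes, at output frequency $2^j$, a quantity bounded essentially only by
$\|\dSv_{j-1}w^3\|_{L^4_\h(L^\infty_\v)}\,2^j\|\dDv_j u^\infty\|_{L^2}\,\|\dDv_j w\|_{L^4_\h(L^2_\v)}$.
Multiplying by $2^{j/2}$ and summing then requires $\sum_j 2^{\frac{3j}{2}}\|\dDv_j u^\infty\|_{L^\infty_T(L^2)}<\infty$. That is one full vertical derivative on $u^\infty$ more than you have.

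None of your three devices removes this loss:
\begin{enumerate}
\item Making this piece $\varepsilon$-small in $\dot B^{0,\frac12}$ is irrelevant, because what is lost is a factor $2^j$, not a constant.
\item The relation $\partial_3w^3=-\operatorname{div}_{\rm h}w^{\rm h}$ does not help. $\dSv_{j-1}w^3$ contains vertical frequencies $2^\ell\ll 2^j$, where it gains only $2^{-\ell}$, not the $2^{-j}$ you need.
\item A time weight $e^{-\lambda\int_0^t(\cdots)}$ can absorb zeroth-order linear terms with time-integrable coefficients, never a derivative loss.
\end{enumerate}

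Your Step 3 bound $\|w^n\|\le C(u^\infty)\|w^n_0\|_{\dot B^{0,\frac12}}$ is a local Lipschitz estimate in the critical norm itself, and one should not expect it to hold. Heuristically, a small, vertically low-frequency $w^3$ displaces the high vertical frequencies of $u^\infty$ by roughly $t\|w_0\|$. The ratio of the resulting $\dot B^{0,\frac12}$ difference to $\|w_0\|$ is then unbounded as $\|w_0\|\to 0$ unless $\sum_j 2^{\frac{3j}{2}}\|\dDv_j u^\infty\|_{L^2}<\infty$. This is why the paper never measures the high frequencies of the difference in $\dot B^{0,\frac12}$, only in $\dHh^{0,-\frac12}$.

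\textbf{The missing idea} is to avoid estimating $u^n-u^\infty$ at high vertical frequencies in $\dot B^{0,\frac12}$ altogether. The paper does this in three steps.
\begin{enumerate}
\item It upgrades the convergence of the data: there is a slowly growing weight $b_j\to\infty$ such that $u^n_0\to u^\infty_0$ in $\dot B^{0,\frac12}_\rb$ (Proposition~\ref{prop1.4}).
\item It propagates this regularity by a tame estimate whose constant depends only on the critical quantity ${\rm A}(t)$ (Proposition~\ref{prop1.5}). Combined with the bootstrap time $T^\star_n$, this makes $(\mathrm{Id}-\dSv_J)u^n$ and $(\mathrm{Id}-\dSv_J)u^\infty$ \emph{each} small, uniformly in $n$, of size $b_J^{-1}$. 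Your splitting gives smallness for $u^\infty$ only, with nothing uniform for the $u^n$.
\item Only the low frequencies $\dSv_J(u^n-u^\infty)$ are treated as a difference. They are bounded by $2^J$ times the weak norm $X$, which is $\dHh^{0,-\frac12}$ at high frequencies precisely so that $\dTv_{w^3}\partial_3 v$ is harmless. In $X$ one gets only an Osgood-type, non-Lipschitz modulus (Proposition~\ref{prop1.6}). That suffices because $J$ is fixed before letting $n\to\infty$.
\end{enumerate}
Some device giving uniform-in-$n$ control of the high vertical frequencies is indispensable. A time weight built from $u^\infty$ alone cannot supply it.
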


We shall sketch the main ideas used in the proof of Theorem~\ref{thm2} in Section \ref{Sect5}.
\medskip

\begin{rmk}
We conclude  by comparing our approach with previous continuity results.
\begin{itemize}
  \item[(1)]  As shown in the abstract framework of \cite{ABITZ}, continuity in a space $X^s$ can be derived from a Lipschitz estimate in a weaker space $X^{s_1}$ together with a tame estimate in a smoother space $X^{s_2}$. The constants in these estimates need to depend only on the $X^s$ norm, and the argument can be viewed as a nonlinear interpolation for $s_1 < s < s_2$. The continuity result in $H^{0,s}$ for $s > \frac12$ obtained in \cite{ZZhu1} follows this strategy, which is basically an elaboration of a classical argument by Bona and Smith~\cite{BoSm} (see also the blog article by T. Tao~\cite{TT})
  
  \item[(2)] However, in the critical space $\dot{B}^{0,\frac12}$, this general method is difficult to apply due to the lack of a suitable Lipschitz estimate in weaker norms. To avoid the loss of vertical derivatives, one naturally considers the weak space $\dot{B}^{0,-\frac12}$. The only available continuity result at this level is \eqref{eq:prop1.3} obtained in \cite{Paicu}, which roughly speaking shows that the norm in the weaker space of the difference of two solutions, $N(t)$,  satisfy
  $$N(t) \leq e^{\log (N(t_0))^{e^{-C|t-t_0|}}}$$ which still implies uniqueness ($e^{-\infty } =0$) but is significantly weaker than a Lipschitz estimate of the form 
  $$ N(t) \leq C_t  N(t_0)$$ 
  required in \cite{ABITZ}.
  
  \item[(3)] Our approach is fundamentally different. Since we show that the initial data actually belong to the smoother space $\dot{B}^{0,\frac12}_{\rm b}$ (see Definition \ref{def:dBb} below), there is no derivative loss when treating the high-frequency part in $\dot{B}^{0,\frac12}$. As a result, we do not rely on abstract interpolation, and any type of continuity in a weaker space suffices to control the low-frequency part, performing in some sense a direct elementary interpolation. In particular, Proposition \ref{prop1.6}, despite being weaker than a Lipschitz estimate, is sufficient for our argument.  Our approach is in fact very general and we believe it should apply to other cases where the Lipschitz contraction estimates does not hold.
\end{itemize}
\end{rmk}

Let us end this section with some notations that will be used throughout this paper.\smallskip

\noindent \textbf{Notations:} For $a \lesssim b$, we mean that there is a uniform constant $C$, which may differ at each occurrence, such that $a \leq C b$. We denote by $(a \mid b)$ the $L^2(\mathbb{R}^3)$ inner product of $a$ and $b$. The sequences $(d_j)_{j \in \mathbb{Z}}$ and $(c_j)_{j \in \mathbb{Z}}$ denote generic elements on the unit spheres of $\ell^1(\mathbb{Z})$ and $\ell^2(\mathbb{Z})$, respectively, i.e., $\sum_{j\in\mathbb{Z}} d_j = \sum_{j\in\mathbb{Z}} c_j^2 = 1$. Finally, we denote by $L^r_T(L^p_{\mathrm{h}}(L^q_{\mathrm{v}}))$ the space $L^r(0,T; L^p(\mathbb{R}_{x_1} \times \mathbb{R}_{x_2}; L^q(\mathbb{R}_{x_3})))$, and set $\nabla_{\mathrm{h}} \eqdefa (\partial_{x_1}, \partial_{x_2})$, $\operatorname{div}_{\mathrm{h}} \vec{f} = \partial_{x_1} f_1 + \partial_{x_2} f_2$ for $\vec{f} = (f_1, f_2)$.
\smallskip

\section{Ideas and structures of  the proof of Theorem~\ref{thm2}}\label{Sect5}

In this section, we sketch the main ideas used in the proof of Theorem~\ref{thm2}. The key idea is the following: given a sequence of initial data $u^n_0 \to u^\infty_0$ in $\dot{B}^{0,\frac12}$, we show that this convergence actually holds in a slightly smoother space than $\dot{B}^{0,\frac12}$. More precisely, we introduce the following Fourier-weighted Besov space:

\begin{defi}\label{def:dBb}
Given an arithmetic function ${\rm b} = \{ b_j \}_{j \in \mathbb{Z}}$ with positive values, we define the anisotropic Besov-type space $\dot{B}^{0,\frac12}_{\rm b}$ as the space of tempered distributions $f$ satisfying
\begin{equation}\label{eqdef:dBb}
\|f\|_{\dot{B}^{0,\frac12}_{\rm b}} \eqdefa \sum_{j\in \mathbb{Z}} b_j2^{\frac{j}{2}}  \| \dot{\Delta}_j^{\mathrm{v}} f \|_{L^2}.
\end{equation}
To exploit the optimal regularity for parabolic equations, for any $T>0,$  we also need  the following Chemin-Lerner type norm $\|\cdot\|_{\widetilde L^p_T(\dB_{\rm b}^{0,\f12})}$ (\cite{CL95}):
\begin{equation}\label{CLdBb}\|f\|_{\widetilde L^p_T(\dB^{0,\f12}_{\rm b})}\eqdefa\sum_{j\in\Z}
b_j 2^{\frac{j}{2}}\| \dDv_j f\|_{L^p_T(L^2)}.\end{equation}
\end{defi}

We first prove that there exists a suitable choice of the arithmetic function ${\rm b}$ such that the initial data also converge in $\dot{B}^{0,\frac12}_{\rm b}$.

\begin{prop}\label{prop1.4}
{\sl If a sequence of functions $\{ u^n_0 \}_{n \in \mathbb{N}} \in \dot{B}^{0,\frac12}$ converges to $u^\infty_0$ in $\dot{B}^{0,\frac12}$ as $n \to \infty$, then there exists a non-decreasing arithmetic function ${\rm b} = \{ b_j \}_{j \in \mathbb{Z}}$ of positive numbers satisfying $\lim_{j \to \infty} b_j = +\infty$, such that for all $n \in \mathbb{N}$, $u^n_0$  belongs to $\dot{B}^{0,\frac12}_{\rm b}$ and $\{u^n_0\}_{n\in\N}$ converges to $u^\infty_0$ in $\dot{B}^{0,\frac12}_{\rm b}$ as $n \to \infty$.}
\end{prop}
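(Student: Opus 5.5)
This is a de la Vallée-Poussin type argument: a sequence converging in $\ell^1$ is uniformly summable, so it can absorb a weight tending to infinity. Set $a^n_j\eqdefa 2^{\frac j2}\|\dDv_j u^n_0\|_{L^2}$ for $n\in\N\cup\{\infty\}$. The hypothesis says $a^n\in\ell^1(\Z)$ and $\|a^n-a^\infty\|_{\ell^1}\to0$; more precisely, with $e^n_j\eqdefa 2^{\frac j2}\|\dDv_j(u^n_0-u^\infty_0)\|_{L^2}$ we have $\sum_j e^n_j\to0$. The goal is a non-decreasing ${\rm b}$ with $b_j\to\infty$ such that $\sum_j b_j a^n_j<\infty$ for every $n$ and $\sum_j b_j e^n_j\to0$.

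\emph{Step 1: uniform tail control.} Define the tails
\[
R(J)\eqdefa \sup_{n\in\N\cup\{\infty\}}\sum_{j\geq J} e^n_j+\sum_{j\geq J}a^\infty_j .
\]
I claim $R(J)\to0$ as $J\to\infty$. Given $\varepsilon>0$, choose $N$ such that $\sum_j e^n_j<\varepsilon$ for all $n>N$. For the finitely many indices $n\le N$, and for the single sequence $a^\infty$, each tail is eventually below $\varepsilon$, since each is an $\ell^1$ sequence. Hence $R(J)\le 2\varepsilon$ for $J$ large. Since $a^n_j\le a^\infty_j+e^n_j$, the tails of every $a^n$ are controlled by $R(J)$ as well.

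\emph{Step 2: construct ${\rm b}$.} Choose an increasing sequence $J_1<J_2<\cdots$ with $R(J_k)\le 4^{-k}$. Set $b_j=1$ for $j<J_1$ and $b_j=2^k$ for $J_k\le j<J_{k+1}$. Then ${\rm b}$ is positive, non-decreasing, and $b_j\to\infty$. For any $n$, summing block by block gives
\[
\sum_{j\ge J_1}b_je^n_j\le\sum_k 2^k R(J_k)\le\sum_k 2^{-k}<\infty ,
\]
uniformly in $n$; the same bound holds for $a^\infty$. Adding the finite weight $b_j=1$ on $j<J_1$ gives $u^n_0\in\dB^{0,\f12}_{\rm b}$ for all $n$, with a uniform bound, since $a^n_j\le a^\infty_j+e^n_j$.

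\emph{Step 3: convergence.} Fix $K$ and split
\[
\sum_j b_je^n_j\le 2^{K}\sum_j e^n_j+\sum_{k>K}2^kR(J_k).
\]
The second term is at most $2^{-K}$, uniformly in $n$, and the first term tends to $0$ as $n\to\infty$. Letting $n\to\infty$ and then $K\to\infty$ gives convergence in $\dB^{0,\f12}_{\rm b}$.

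The only delicate point is Step 1: the weight must work for all $n$ simultaneously. This works because a convergent sequence in $\ell^1$ is relatively compact and therefore has uniformly small tails. Only $j\to+\infty$ needs weighting, because $b_j\ge1$ at negative $j$ costs nothing.
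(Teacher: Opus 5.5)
Your argument is correct and is essentially the paper's construction. Both use block-constant weights placed at thresholds $J_k$ beyond which the relevant tails are at most $4^{-k}$, and both obtain convergence by splitting into low blocks (bounded weight times $\|u_0^n-u_0^\infty\|_{\dB^{0,\f12}}\to0$) and high blocks (uniformly small tails). The difference is only in packaging: you absorb the uniformity in $n$ and the limit $u_0^\infty$ into the single uniform tail function $R(J)$ and build one weight, whereas the paper handles $u_0^\infty$ (Lemma \ref{lem2.1}) and the differences (Lemma \ref{lem2.2}, via the interleaved $N_k$, $J_k$ induction) separately and then takes $b_j=\min(b_j^1,b_j^2)$.
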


\begin{rmk}
One can further choose $\rb$ to be a smaller function growing sufficiently slowly. In particular, throughout this paper, we assume that $\rb$ satisfies $b_j \leq b_{j+1} \leq 2b_j$.
\end{rmk}

The proof of the elementary Proposition \ref{prop1.4} will be presented in Section \ref{Sect2}.  We then establish the following estimate to propagate the $\dot{B}^{0,\frac12}_\rb$ regularity, the proof of which will be postponed in Section \ref{Sect3}.

\begin{prop}[Tame estimate in $\dot{B}^{0,\frac12}_{\rm b}$]\label{prop1.5}
{\sl Let ${\rm b} = \{ b_j \}_{j \in \mathbb{Z}}$ be a non-decreasing arithmetic function of positive numbers satisfying $b_j \leq b_{j+1} \leq 2b_j$. If $u(t)$ is a solution of $(ANS)$ on $[0,T]$ with initial data $u_0 \in \dot{B}^{0,\frac12}_{\rm b}$, then for all $t \in [0,T]$, one has
\begin{equation}\label{eq:prop1.5}
\begin{aligned}
&\|u\|_{\wt{L}^\infty_t(\dot{B}^{0,\frac12}_{\rm b})} + \|\nabla_{\mathrm{h}} u\|_{\wt{L}^2_t(\dot{B}^{0,\frac12}_{\rm b})}  \leq C e^{C {\rm A}(t)} \|u_0\|_{\dot{B}^{0,\frac12}_\rb} \\
&\text{with } {\rm A}(t) \eqdefa \int_0^t \left( 1 + \|u(\tau)\|_{\dot{B}^{0,\frac12}}^2 \right) \left( 1 + \|\nabla_{\mathrm{h}} u(\tau)\|_{\dot{B}^{0,\frac12}}^2 \right) \, d\tau.
\end{aligned}
\end{equation}}
\end{prop}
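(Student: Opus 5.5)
Our plan is to run the standard anisotropic Littlewood--Paley energy method of Paicu for $(ANS)$ in $\dot{B}^{0,\frac12}$, keeping track of the weights $b_j$ and using only $b_j\le b_{j+1}\le 2b_j$. We apply $\dDv_j$ to $(ANS)$ and take the $L^2$ inner product with $\dDv_j u$. The pressure term vanishes because $\dive \dDv_j u=0$. This gives
\begin{equation*}
\frac12\frac{d}{dt}\|\dDv_j u\|_{L^2}^2+\|\nabla_{\rm h}\dDv_j u\|_{L^2}^2=-\bigl(\dDv_j(u\cdot\nabla u)\mid \dDv_j u\bigr).
\end{equation*}
We integrate in time, take square roots, multiply by $b_j2^{\frac j2}$ and sum over $j$. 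The left-hand side then controls the quantity $E(t)\eqdefa\|u\|_{\wt{L}^\infty_t(\dB^{0,\frac12}_\rb)}+\|\nabla_{\rm h}u\|_{\wt{L}^2_t(\dB^{0,\frac12}_\rb)}$, up to $\|u_0\|_{\dB^{0,\frac12}_\rb}$. It remains to bound $\sum_j b_j2^{\frac j2}\bigl(\int_0^t|(\dDv_j(u\cdot\nabla u)\mid\dDv_j u)|\,d\tau\bigr)^{1/2}$.

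We split $u\cdot\nabla u=u^{\rm h}\cdot\nabla_{\rm h}u+u^3\p_3u$. The horizontal part contains no vertical derivative. The vertical part uses $\p_3u^3=-\dive_{\rm h}u^{\rm h}$, so a vertical derivative on the low-frequency factor $u^3$ is traded for a horizontal one. For each piece we use Bony's vertical decomposition $ab=\dTv_ab+\dTv_ba+\dRv(a,b)$, together with anisotropic Bernstein inequalities such as $\|\dDv_k a\|_{L^2_{\rm h}(L^\infty_{\rm v})}\lesssim 2^{\frac k2}\|\dDv_ka\|_{L^2}$ and $\|a\|_{L^4_{\rm h}}\lesssim\|a\|_{L^2_{\rm h}}^{1/2}\|\nabla_{\rm h}a\|_{L^2_{\rm h}}^{1/2}$. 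The aim is to prove, as in Paicu's $\dB^{0,\frac12}$ estimate but with the weight carried along, the bound
\begin{equation*}
\int_0^t\bigl|\bigl(\dDv_j(u\cdot\nabla u)\mid\dDv_ju\bigr)\bigr|d\tau\lesssim d_j^2\,b_j^{-2}2^{-j}\Bigl(\int_0^t f(\tau)\|u\|_{\dB^{0,\frac12}_\rb}^2d\tau\Bigr)^{\frac12}\|\nabla_{\rm h}u\|_{\wt{L}^2_t(\dB^{0,\frac12}_\rb)}+\ldots,
\end{equation*}
where $f(\tau)=\bigl(1+\|u\|_{\dB^{0,\frac12}}^2\bigr)\bigl(1+\|\nabla_{\rm h}u\|_{\dB^{0,\frac12}}^2\bigr)$. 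The key point is that each term is \emph{tame}: exactly one factor is measured in the weighted norm and the others in $\dB^{0,\frac12}$. We place the weight on the factor whose vertical frequency is comparable to $2^j$. The high--low paraproduct $\dTv_{u}\nabla u$ is the term where the weight lands on the high-frequency factor. Its worst part, where the derivative hits the high-frequency factor, we handle by the usual commutator $[\dDv_j,\dSv_{k-1}u]\p_3\dDv_ku$ together with the divergence-free cancellation. In the low--high paraproduct we put the weight on the factor at frequency $\sim2^j$, using $b_{k}\lesssim b_j$ for $k\le j+1$, which follows from monotonicity and $b_{j+1}\le2b_j$. In the remainder $\dRv$, with $k\ge j-N_0$, we need $b_j\le b_k$ (monotonicity), and the summation $\sum_{k\ge j}2^{\frac{j-k}{2}}$ converges thanks to the $2^{\frac j2}$ gain from Bernstein.

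After summing in $j$ and using Young's inequality $ab\le\frac12a^2+Cb^2$, we absorb $\frac12\|\nabla_{\rm h}u\|_{\wt L^2_t(\dB^{0,\frac12}_\rb)}$ into the left-hand side and obtain
\begin{equation*}
E(t)\lesssim\|u_0\|_{\dB^{0,\frac12}_\rb}+\Bigl(\int_0^tf(\tau)\,\|u\|_{\wt L^\infty_\tau(\dB^{0,\frac12}_\rb)}^2d\tau\Bigr)^{\frac12}.
\end{equation*}
Squaring and applying Gronwall then gives \eqref{eq:prop1.5} with ${\rm A}(t)=\int_0^tf$. We expect the main obstacle to be the trilinear estimate with the time-weighted factor $f$: obtaining $d_j^2$-type summability in Chemin--Lerner norms while keeping a time-integrable weight that makes Gronwall applicable. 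The standard way to achieve this is to interpolate $\|\dDv_k u\|_{L^4_{\rm h}(L^2_{\rm v})}$ between $L^\infty_t$ and $L^2_t(\nabla_{\rm h})$ bounds, which produces exactly the product structure of $f$. If the Chemin--Lerner setting makes this awkward, our fallback is to run the argument on short time intervals where $\int f$ is small and iterate.
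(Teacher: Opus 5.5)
\textbf{The closing step of your main argument does not go through.} Your trilinear bookkeeping matches the paper: the split $u^{\h}\cdot\nabla_\h u+u^3\p_3u$, the vertical Bony decomposition, the commutator together with $\p_3u^3=-\operatorname{div}_{\rm h}u^{\rm h}$, and the weight placed on the block at frequency $\gtrsim 2^j$. The problem is the summable factor $d_j^2$. It is only available when each block is measured in $L^2$ in time \emph{before} summing in $j$. After you take square roots, multiply by $b_j2^{\frac j2}$ and sum, what the estimates actually produce is the Chemin--Lerner quantity $\|f^{\frac12}u\|_{\wt L^2_t(\dB^{0,\frac12}_\rb)}=\sum_j b_j2^{\frac j2}\|f^{\frac12}\dDv_j u\|_{L^2_t(L^2)}$, times powers of $\|\nabla_\h u\|_{\wt L^2_t(\dB^{0,\frac12}_\rb)}$.

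Minkowski's inequality goes the wrong way for what you need. This quantity dominates $\bigl(\int_0^t f\|u(\tau)\|^2_{\dB^{0,\frac12}_\rb}\,d\tau\bigr)^{\frac12}$. It is not controlled by $\bigl(\int_0^t f(\tau)\|u\|^2_{\wt L^\infty_\tau(\dB^{0,\frac12}_\rb)}\,d\tau\bigr)^{\frac12}$ either, because the $j$-sum sits outside the time integral. Blocks that are active only on shorter and shorter intervals just before $t$ rule out any such comparison. Consequently neither your displayed per-$j$ bound nor $E(t)\lesssim\|u_0\|_{\dB^{0,\frac12}_\rb}+\bigl(\int_0^t f\|u\|^2_{\wt L^\infty_\tau(\dB^{0,\frac12}_\rb)}d\tau\bigr)^{\frac12}$ follows, and Gronwall cannot be applied. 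This is exactly the obstruction the paper points out: a Gronwall argument does not work in Chemin--Lerner norms. Interpolating $\|\dDv_k u\|_{L^4_\h(L^2_\v)}$ explains the product form of $f$ but does not address this.

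A minor point: to move $b_j$ onto a block $\dDv_k$ with $k\ge j-4$ you need $b_j\lesssim b_k$. For $k<j$ this uses $b_{j+1}\le 2b_j$, not monotonicity alone. You state these inequalities in the opposite direction.

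\textbf{Your fallback does close the argument.}
\begin{itemize}
\item On $I=[t_k,t_{k+1}]$ with $\int_I f\le\varepsilon$, one has block by block $\|f^{\frac12}u\|_{\wt L^2_I(\dB^{0,\frac12}_\rb)}\le\varepsilon^{\frac12}\|u\|_{\wt L^\infty_I(\dB^{0,\frac12}_\rb)}$.
\item After Young's inequality everything is absorbed, giving $\|u\|_{\wt L^\infty_I(\dB^{0,\frac12}_\rb)}+\|\nabla_\h u\|_{\wt L^2_I(\dB^{0,\frac12}_\rb)}\le C\|u(t_k)\|_{\dB^{0,\frac12}_\rb}$.
\item Partition $[0,t]$ into $K\le 1+\A(t)/\varepsilon$ such intervals and iterate.
\item Glue the pieces using $\sup_{[0,t]}\le\sum_k\sup_{I_k}$ and $(\sum_k a_k^2)^{\frac12}\le\sum_k a_k$. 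This yields $C^{K+1}\|u_0\|_{\dB^{0,\frac12}_\rb}\lesssim e^{C\A(t)}\|u_0\|_{\dB^{0,\frac12}_\rb}$.
\end{itemize}

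The paper does the same thing in a single pass with a time weight. Multiplying the localized energy identity by $\ff^2(t)=e^{-2C_0\A(t)}$ produces $2C_0\int_0^t\A'\ff^2\|\dDv_ju\|_{L^2}^2\,d\tau$ on the left. After summation this is $\sqrt{2C_0}\|(\A')^{\frac12}\ff u\|_{\wt L^2_t(\dB^{0,\frac12}_\rb)}$, which absorbs the right-hand-side terms of the same Chemin--Lerner form once $C_0$ is large. The weight is then removed because $\ff$ is non-increasing.

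The weight avoids partitioning and restarting the estimate at intermediate times. Your interval splitting is more elementary and equally valid, but it has to be your main argument rather than a fallback.
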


Next, we want to derive a continuity estimate in some weaker space. The $H^{0,-\frac12}$ norm can overcome the vertical derivative loss for high frequencies, while for low frequencies, it is better to consider the $\dot{B}^{0,\frac12}$ norms, since the initial data does not belong to any space with lower regularity. More precisely, we first introduce two seminorms concerning  different frequencies:
\begin{equation}\label{2.4}
  \|f\|_{\dBl^{0,\f12}}\eqdefa\sum_{j\leq -1} 2^\f{j}2\|\dDv_j f\|_{L^2} 
  \andf
  \|f\|_{\dHh^{0,-\f12}}\eqdefa \Bigl( \sum_{j\geq 0} 2^{-j}{\|\dDv_j f\|_{L^2}^2}\Bigr)^\f12,
\end{equation}
and the norm $\|\cdot\|_X:$
\begin{equation}\label{S1eq1a}
\begin{split}
  \|f\|_{X} \eqdefa \|f\|_{\dBl^{0,\f12}} + \|f\|_{\dHh^{0,-\frac12}}& = \sum_{j\leq -1} 2^{\frac{j}{2}} \|\dot{\Delta}_j^{\mathrm{v}} f\|_{L^2} + \Bigl( \sum_{j\geq 0} 2^{-j} \|\dot{\Delta}_j^{\mathrm{v}} f\|_{L^2}^2 \Bigr)^{\frac12}.
  \end{split}
\end{equation}

Although the Osgood argument works for the high frequency seminorm $\dHh^{0,-\f12}$, we cannot apply Gronwall type argument for the low frequency part in Chemin-Lerner  type Besov spaces. For this reason,  as in \cite{PZ11}, we need to use time-weighted Chemin-Lerner type norm.  Motivated by \cite{CL95},  for any $p\in [1,\infty],$ we introduce the associated Chemin–Lerner seminorms:
\begin{equation}\label{S1eq2aq}
  \|f\|_{\wt L^p_T(\dBl^{0,\f12})}\eqdefa\sum_{j\leq -1} 2^\f{j}2\|\dDv_j f\|_{L^p_T(L^2)} 
  \andf
  \|f\|_{\wt L^p_T(\dHh^{0,-\f12})}\eqdefa \Bigl( \sum_{j\geq 0} 2^{-j}{\|\dDv_j f\|_{L^p_T(L^2)}^2}\Bigr)^\f12.
\end{equation}
and denote their summation by:
\begin{equation}\label{S1eq2a}
\begin{split}
\|f\|_{\widetilde L^p_T(X)} &\eqdefa \|f\|_{\widetilde L^p_T(\dBl^{0,\frac12})} + \|f\|_{\widetilde L^p_T(\dHh^{0,-\frac12})}.
\end{split}
\end{equation}

Then we present the estimates for the difference of any two solutions  of $(ANS)$ in the time-weighted $\wt{L}^\infty_T(X)\cap\wt{L}^2_T(X)$ space, the proof of which will be postponed to Section \ref{Sect4}:

\begin{prop}[Weak continuity in $X$]\label{prop1.6}
{\sl Let $u(t)$ and $v(t)$ be two solutions of $(ANS)$ on $[0,T]$ with initial data $u_0$ and $v_0$, respectively. If $\|u - v\|_{\widetilde L^\infty_T(X)} \leq 2^{-10}$, then for all $t \in [0,T]$, there holds
\begin{equation}\label{eq:prop1.6}
\begin{aligned}
 &\|\fg(u-v)\|_{\widetilde L^\infty_t(X)}^2 + \|\fg\nabla_{\mathrm{h}} (u - v)\|_{\widetilde L^2_t(X)}^2 \\
 &\leq C \|u_0-v_0\|_{X}^2+C\int_0^t \B'(\tau)\Bigl(-\ln\|\left(\fg(u-v)\right)\|_{\widetilde L^\infty_\tau(X)}^2\Bigr)\\
 &\qquad\times\ln\Bigl(-\ln\|\left(\fg(u-v)\right)\|_{\widetilde L^\infty_\tau(X)}^2\Bigr) \|\left(\fg(u-v)\right)\|_{\widetilde L^\infty_\tau(X)}^2\, d\tau, \with \\
\frak{g}(t)\eqdefa &e^{-C{\rm B}(t)}\andf {\rm B}(t) \eqdefa \int_0^t \Bigl( 1 + \|(u,v)(\tau)\|_{\dot{B}^{0,\frac12}}^2 \Bigr)
\Bigl( 1 + \|\nabla_{\mathrm{h}} (u,v)(\tau)\|_{\dot{B}^{0,\frac12}}^2 \Bigr) \, d\tau.
\end{aligned}
\end{equation}}
\end{prop}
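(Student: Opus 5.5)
We set $w\eqdefa u-v$ and $\pi\eqdefa p_u-p_v$. The difference solves
\[
\partial_t w-\Delta_{\rm h} w+u\cdot\nabla w+w\cdot\nabla v=-\nabla\pi,\qquad \dive w=0,\qquad w|_{t=0}=u_0-v_0 .
\]
The plan is to localise in vertical frequency, do an $L^2$ energy estimate on each block, multiply by the weight $\fg(t)$, and then sum. Low frequencies $j\le -1$ are summed with weight $2^{j/2}$ in $\ell^1$. High frequencies $j\ge0$ are summed with weight $2^{-j}$ in $\ell^2$.

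\textbf{Step 1: block estimate.} Apply $\dDv_j$ and take the $L^2$ inner product with $\fg^2\dDv_j w$. Using $\fg'=-C\B'\fg$, this gives
\[
\tfrac12\tfrac{d}{dt}\|\fg\dDv_j w\|_{L^2}^2+C\B'(t)\|\fg\dDv_jw\|_{L^2}^2+\|\fg\nabla_{\rm h}\dDv_jw\|_{L^2}^2
=-\fg^2\bigl(\dDv_j(u\cdot\nabla w+w\cdot\nabla v)\,\big|\,\dDv_jw\bigr).
\]
The pressure term vanishes by the divergence-free condition.

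The nonlinear terms are treated by anisotropic Bony decomposition in $x_3$. Horizontal derivatives $\nabla_{\rm h}$ are absorbed by dissipation. Vertical derivatives are rewritten through $\partial_3 w^3=-\dive_{\rm h} w^{\rm h}$, so $w^3\partial_3$ is converted into horizontal derivatives. The term $u^3\partial_3 w$ is handled by a commutator $[\dDv_j,\dSv_{k-1}u^3]\partial_3$. The aim is a bound of the form
\[
\bigl|\bigl(\dDv_j(\cdots)\mid\dDv_jw\bigr)\bigr|\lesssim \|\nabla_{\rm h}\dDv_jw\|\,\bigl(\|(u,v)\|_{\dB^{0,\f12}}^{\f12}\|\nabla_{\rm h}(u,v)\|_{\dB^{0,\f12}}^{\f12}\bigr)\,\Theta_j(w),
\]
where $\Theta_j$ is a weighted block sum of $\|\dDv_k w\|^{\f12}\|\nabla_{\rm h}\dDv_kw\|^{\f12}$, with $k\sim j$ or $k\le j$. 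After Young's inequality, the dissipation is absorbed, and the remaining term $\B'(t)\|\dDv w\|^2$ is absorbed by the $C\B'$ term coming from the weight $\fg$. This absorption is the purpose of the time weight, as in \cite{PZ11}.

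\textbf{Step 2: summation and the Osgood-type term.} Integrate in time, take square roots for the $\wt L^\infty_t\cap\wt L^2_t$ norms, and sum.

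For $j\le-1$, the low-frequency $\ell^1$ sum closes exactly at the critical level, with no loss, because vertical frequencies are small.

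For $j\ge0$, the $\dHh^{0,-\f12}$ norm loses in the paraproduct $\dSv_{k-1}w\cdot\partial_3\dDv_k v$ with $k\sim j$ large. There, $\|\dSv_{k-1}w\|_{L^\infty_{\rm v}}$ involves $\sum_{k'\le k}2^{k'/2}\|\dDv_{k'}w\|$, which is controlled by $\sqrt{k}\,\|w\|_{\dHh^{0,-\f12}}$ plus low frequencies. This logarithmic divergence is the one familiar from \cite{Paicu}.

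To handle it, split the high-frequency sum at a level $N$.
\begin{itemize}
\item For $j\le N$, the loss is at most a factor $N$ (or $\sqrt N$).
\item For $j>N$, use the bound $\|w\|_{\dB^{0,\f12}}\lesssim\|(u,v)\|_{\dB^{0,\f12}}$ together with the gain $2^{-N/2}$ from the $2^{-j}$ weight.
\end{itemize}
Choose $2^{-N}\approx\|\fg w\|^2_{\wt L^\infty_\tau(X)}$, which is permitted since $\|w\|_{\wt L^\infty_T(X)}\le2^{-10}$ forces $N\ge 1$. This converts the loss into
\[
\B'(\tau)\,\bigl(-\ln\|\fg w\|^2\bigr)\,\ln\bigl(-\ln\|\fg w\|^2\bigr)\,\|\fg w\|^2 .
\]
The extra factor $\ln(-\ln)$ comes from the $\ell^2$ versus $\ell^1$ mismatch between the low and high parts, and from summing a $\sqrt k$ loss over dyadic blocks.

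\textbf{Main obstacle.} I expect the main difficulty to be the interaction between the two frequency regimes in the trilinear estimates. A low-frequency block of $w$ measured in $\ell^1$ at regularity $\f12$ can be fed by high-frequency products measured only in $H^{-\f12}$, and conversely. This must be checked paraproduct by paraproduct, and it must be done so that every constant enters only through $\B'$, which makes $\fg$ absorb it.

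I would first treat the remainder term $\sum_{k\ge j-N_0}\dDv_k w\,\dDv_k v$ projected to a low block $j\le-1$. Bernstein's inequality in $x_3$ gives a factor $2^{j/2}$. The sum over $k$ should then be controlled by $\|w\|_{\dHh^{0,-\f12}}\|v\|_{\dB^{0,\f12}}$ using the Cauchy--Schwarz inequality against $2^{-k/2}\cdot2^{k/2}$. If this fails, I would use $\|w\|_{\dB^{0,\f12}}$ boundedness with a smallness gain instead.

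Finally, integrating Step 1 from $0$ to $t$ and collecting the estimates of Step 2 yields \eqref{eq:prop1.6}.
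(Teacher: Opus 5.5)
Your overall architecture matches the paper's. That covers the time-weighted block energy estimates, the $\ell^1$ Chemin--Lerner sums for $j\le -1$, and the $\dHh^{0,-\f12}$ sums for $j\ge 0$ with the $j$-sum taken before estimating the trilinear terms. It also covers a Paicu-type cutoff $N$ with $2^{-N}$ comparable to a power of $\|w\|_X$, and control of the tail by the a priori $\dB^{0,\f12}$ bounds of $u,v$. Fixing $N$ by the running supremum, rather than by the paper's pointwise $J(t)$ plus monotonicity of $\Phi_1$, is harmless.

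The gap is the step you assert without deriving: how the logarithmic loss becomes $\B'(-\ln\|w\|_X^2)\ln(-\ln\|w\|_X^2)\|w\|_X^2$. First, the loss is misidentified. For a general component, the high part of $\sum_{k'\le k}2^{k'/2}\|\dDv_{k'}w\|_{L^2}$ is only bounded by $2^{k}\|w\|_{\dHh^{0,-\f12}}$. The $\sqrt k$ appears only for $w^3$, via $2^{k'/2}\|\dDv_{k'}w^3\|_{L^2}\lesssim 2^{-k'/2}\|\dDv_{k'}\nabla_{\rm h}w\|_{L^2}$. So the loss is $\sqrt k\,\|\nabla_{\rm h}w\|_{\dHh^{0,-\f12}}$, and it sits on the dissipative factor. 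The same $\sqrt N$ (from $\sum_{0\le j\le N}c_j\lesssim\sqrt N$) multiplies $\|\nabla_{\rm h}w\|_X$ in the high-block remainders $\dRv(u^{\rm h},\nabla_{\rm h}w)$ and $\dRv(w^3,\partial_3 v)$, which you do not flag.

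Now apply the $L^4_{\rm h}$ interpolation of your Step 1: bound the coefficient by $\|(u,v)\|_{\dB^{0,\f12}}^{\f12}\|\nabla_{\rm h}(u,v)\|_{\dB^{0,\f12}}^{\f12}$ and the test block by $\|\dDv_jw\|^{\f12}\|\nabla_{\rm h}\dDv_jw\|^{\f12}$. These terms are then of size $\sqrt N\,(\B')^{\f14}\|w\|_X^{\f12}\|\nabla_{\rm h}w\|_X^{\f32}$. Absorbing $\|\nabla_{\rm h}w\|_X^{\f32}$ by Young leaves $N^2\B'\|w\|_X^2\approx\B'(\ln\|w\|_X^2)^2\|w\|_X^2$. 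That is not the stated bound and not an Osgood modulus, so it would not even give uniqueness. No $\ell^1$ versus $\ell^2$ bookkeeping turns $N^2$ into $N\ln N$.

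The missing ingredient, which the paper takes from Paicu, is to use $L^p_{\rm h}\times L^{\frac{2p}{p-2}}_{\rm h}$ instead of $L^4_{\rm h}\times L^4_{\rm h}$ in exactly these terms:
\begin{itemize}
\item For the $u$ or $v$ factor, use the sharp two-dimensional inequality $\|f\|_{L^p(\R^2)}\le C\sqrt p\,\|f\|_{L^2}^{\f2p}\|\nabla_{\rm h}f\|_{L^2}^{1-\f2p}$, which gives $\sqrt p\,(\B')^{\f12-\f1p}$.
\item For the test block, use $\|\dDv_jw\|_{L^{\frac{2p}{p-2}}_{\rm h}(L^2_{\rm v})}\lesssim\|\dDv_jw\|_{L^2}^{1-\f2p}\|\nabla_{\rm h}\dDv_jw\|_{L^2}^{\f2p}$.
\end{itemize}
The term becomes $\sqrt{Np}\,(\B')^{\f12-\f1p}\|w\|_X^{1-\f2p}\|\nabla_{\rm h}w\|_X^{1+\f2p}$. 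Young now leaves $(Np)^{\frac{p}{p-2}}\B'\|w\|_X^2$. The choice $p=\ln N+2$ gives $(Np)^{\frac{p}{p-2}}\lesssim N\ln N$. This $p$-optimization is the sole source of the $\ln(-\ln)$ factor.

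Relatedly, in $\dTv_{w^3}\partial_3 v$ the cut must be made on the frequency $\ell$ of $w^3$ inside $\dSv_{k-1}$, not on $j$. There the $2^{-j}$ weight is entirely consumed by $\partial_3$ and the test block, so it gives no gain. The smallness for $\ell>N$ comes instead from $\sum_{\ell>N}2^{\ell/2}\|\dDv_\ell w^3\|_{L^2}\lesssim 2^{-N}\|\nabla_{\rm h}(u,v)\|_{\dB^{0,\f12}}$. In other words, it comes from the one-derivative gap between the $\dB^{0,\f12}$ control of $(u,v)$ and the $H^{-\f12}$ scale.
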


\begin{rmk}
  We have several comments in order concerning the estimate \eqref{eq:prop1.6}:
  
  \begin{itemize}
    \item[(1)] Paicu \cite{Paicu}  established the following  $H^{0,-\frac12}$ estimate for the difference of any two solutions of $(ANS)$:
    \begin{equation}\label{eq:prop1.3}
\begin{aligned}
\frac{d}{dt} \|(u - v)(t)\|_{H^{0,-\frac12}}^2 &+ \|\nabla_{\mathrm{h}} (u - v)(t)\|_{H^{0,-\frac12}}^2 \leq C \frak{B}'(t) \|(u- v)(t)\|_{H^{0,-\frac12}}^2 \\
& \times \Bigl( 1 - \ln\|(u- v)(t)\|_{H^{0,-\frac12}}^2 \Bigr)
   \ln\Bigl( 1 - \ln\|(u- v)(t)\|_{H^{0,-\frac12}}^2 \Bigr), \\
\text{with } \frak{B}(t) &\eqdefa \int_0^t \Bigl( 1 + \|(u,v)(\tau)\|_{H^{0,\frac12}}^2 \Bigr)
                     \Bigl( 1 + \|\nabla_{\mathrm{h}} (u,v)(\tau)\|_{H^{0,\frac12}}^2 \Bigr) \, d\tau.
\end{aligned}
\end{equation}
The estimate \eqref{eq:prop1.6} can be understood as an integrated counterpart of  \eqref{eq:prop1.3}, yet the low frequency part of the difference in the $X$ norm  is  $\dot{B}^{0,\frac12}$ norm, and the  time-weighted function $\fg(t)$ depends only on the critical  $\dot{B}^{0,\frac12}$ norm of the solutions.

\item[(2)]  We get, by  applying Osgood's lemma to \eqref{eq:prop1.6}, that
\begin{equation}\notag
  \begin{aligned}
    \|&\fg(u-v) \|_{\widetilde L^\infty_t(X)}^2 + \|\fg\nabla_{\mathrm{h}} (u - v)\|_{\widetilde L^2_t(X)}^2\leq \exp \Bigl(- \Bigl(\ln \frac{1}{C \|u_0-v_0\|_{X}^2}\Bigr)^{\fg(t)}  \Bigr),
  \end{aligned}
\end{equation}
from which and the definition of $\fg(t)$ in \eqref{eq:prop1.6}, we infer
\begin{equation}\label{eq1.9}
  \|(u-v) \|_{\widetilde L^\infty_t(X)}^2 + \|\nabla_{\mathrm{h}} (u - v)\|_{\widetilde L^2_t(X)}^2 \leq e^{C {\rm B}(t)} \exp \Bigl(- \Bigl(\ln \frac{1}{C \|u_0-v_0\|_{X}^2}\Bigr)^{\fg(t)}  \Bigr).
\end{equation}
If ${\rm B}(t)$ is bounded and $\|u_0-v_0\|_{X} \leq C\|u_0-v_0\|_{\dot{B}^{0,\frac12}},$ which is sufficiently small, then the left-hand side of \eqref{eq1.9} is small enough as well.
In particular, the estimate \eqref{eq1.9} yields the uniqueness of solution of $(ANS)$ in the space  $\dot{B}^{0,\frac12}.$
  \end{itemize}
\end{rmk}

Finally to prove Theorem~\ref{thm2}, we decompose the difference $u^n(t) - u^\infty(t)$ into high-frequency and low-frequency parts.  Since Proposition \ref{prop1.5} ensures boundedness of $\{ u^n(t) \}$ in $\dot{B}^{0,\frac12}_\rb$ with $\lim_{j \to \infty} b_j = \infty,$ the high-frequency part is automatically small in $\dot{B}^{0,\frac12}$. For the low-frequency part, Proposition \ref{prop1.6} yields continuity in a weaker norm, and also strong norm due to the low frequency condition.

We now admit Propositions \ref{prop1.4}-\ref{prop1.6} for the time being and continue our  proof of Theorem~\ref{thm2}.

\begin{proof}[Proof of Theorem~\ref{thm2}]
  As $u^\infty(t)$ is a solution of $(ANS)$ with initial data $u_0^\infty \in \dot{B}^{0,\frac12}$ on $[0,T]$, we deduce from Theorem~1 of \cite{Paicu} that there exists some positive constant $\rC_1$ such that
\begin{equation}\label{eq6.1}
\begin{split}
\|u^\infty\|_{L^\infty_T(\dot{B}^{0,\frac12})} + \|\nabla_{\mathrm{h}} u^\infty\|_{L^2_T(\dot{B}^{0,\frac12})}
\leq \rC_1.
\end{split}
\end{equation}
Since $u_0^n$ converges to $u_0^\infty$ in $\dot{B}^{0,\frac12}$, the sequence $\{ u^n_0 \}_{n \in \mathbb{N}}$ is uniformly bounded in $\dot{B}^{0,\frac12}$. Therefore, by taking $\rC_1$ large enough, we may assume without loss of generality that $\rC_1 \geq \sup_{n \in \mathbb{N}} \|u^n_0\|_{\dot{B}^{0,\frac12}}$.

On the other hand, corresponding to the initial data $u^n_0 \in \dot{B}^{0,\frac12}$, we deduce from Theorem~1 of \cite{Paicu} that the system $(ANS)$ has a unique solution $u^n(t)$ on $[0, T^*_n)$ for some maximal existence time $T^*_n$. Then, for every $n \in \mathbb{N}$, we introduce the following timespan:
\begin{equation}\label{eq:def Tn d}
\begin{split}
T_n^\star \eqdefa \sup \Bigl\{ t \in (0, \min(T, T^*_n)) \ : \ & \|u^n\|_{L^\infty_t(\dot{B}^{0,\frac12})} + \|\nabla_{\mathrm{h}} u^n\|_{L^2_t(\dot{B}^{0,\frac12})} \leq 2\rC_1 \Bigr\}.
\end{split}
\end{equation}
It is easy to observe that $T^\star_n > 0$ because $\rC_1 \geq \sup_{n \in \mathbb{N}} \|u^n_0\|_{\dot{B}^{0,\frac12}}$.

Thanks to \eqref{eq:def Tn d}, we have
\begin{equation}\label{eq6.3}
  \int_0^{T_n^\star} \Bigl(1 + \|u^n(t)\|_{\dot{B}^{0,\frac12}}^2\Bigr) \Bigl(1 + \|\nabla_{\mathrm{h}} u^n(t)\|_{\dot{B}^{0,\frac12}}^2\Bigr) \, dt \leq {(1 + 4\rC_1^2)(T + 4\rC_1^2)}.
\end{equation}

Since $u^n_0 \to u^\infty_0$ in $\dot{B}^{0,\frac12}$, we get, by applying Proposition \ref{prop1.4}, 
that there exists arithmetic function $\rb=\{b_j\}_{j\in\mathbb{Z}}$ such that $\lim_{j \to \infty} b_j = +\infty$, each $u^n_0$ for $n \in \mathbb{N}$ belongs to $\dot{B}^{0,\frac12}_\rb$, and the sequence $\{ u^n_0 \}_{n\in\N}$ converges to $u^\infty_0$ in $\dot{B}^{0,\frac12}_\rb$ as $n \to \infty$.

We decompose the difference $u^n - u^\infty$ as
\begin{equation} \label{S6eq1}
u^n - u^\infty = \dot{S}_J^{\mathrm{v}} (u^n - u^\infty) + (\mathrm{Id} - \dot{S}_J^{\mathrm{v}})(u^n - u^\infty),
\end{equation}
where $J \in \mathbb{N}$ is a sufficiently large integer to be chosen later on.

Below we consider the low- and high-frequency parts separately.

\smallskip
\noindent $\Large\bullet$ Estimates for the high-frequency part $(\mathrm{Id} - \dot{S}_J^{\mathrm{v}})(u^n - u^\infty)$.\smallskip

Since $u^n_0 \to u^\infty_0$ in $\dot{B}^{0,\frac12}_\rb$ as $n \to \infty$, the sequence of initial data $u_0^n$ is uniformly bounded in $\dot{B}^{0,\frac12}_\rb$. Let us denote
\[
\rC_2 \eqdefa \sup_{n \in \mathbb{N}} \|u^n_0\|_{\dot{B}_\rb^{0,\frac12}}.
\]

Thanks to \eqref{eq:prop1.5} and \eqref{eq6.3}, we get, by applying Proposition \ref{prop1.5} for $u^n$ on {$[0, T_n^\star]$,} that
\begin{align}\label{S6eq2}
  \|u^n\|_{L^\infty_{T_n^\star}(\dot{B}^{0,\frac12}_b)} + \|\nabla_{\mathrm{h}} u^n\|_{L^2_{T_n^\star}(\dot{B}^{0,\frac12}_b)}
\leq C e^{C {(1 + 4\rC_1^2)(T + 4\rC_1^2)}} \|u_0^n\|_{\dot{B}^{0,\frac12}_\rb}^2.
\end{align}

Observing that for any function $f \in \dot{B}_\rb^{0,\frac12}$, the monotonicity of $b_j$ implies
\[
\| (\mathrm{Id} - \dot{S}_J^{\mathrm{v}}) f \|_{\dot{B}^{0,\frac12}} = \sum_{j=J}^\infty b_j^{-1} \bigl( b_j 2^{j/2} \| \dot{\Delta}_j^{\mathrm{v}} f \|_{L^2} \bigr) \leq b_J^{-1} \| f \|_{\dot{B}^{0,\frac12}_\rb},
\]
from which, and \eqref{S6eq2}, we infer
\begin{equation}\notag
  \begin{aligned}
    &\| (\mathrm{Id} - \dot{S}_J^{\mathrm{v}})(u^n - u^\infty) \|_{L^\infty_{T_n^\star}(\dot{B}^{0,\frac12})}
      + \| (\mathrm{Id} - \dot{S}_J^{\mathrm{v}}) \nabla_{\mathrm{h}} (u^n - u^\infty) \|_{L^2_{T_n^\star}(\dot{B}^{0,\frac12})} \\
    &\leq b_J^{-1} \Bigl( \| (u^n, u^\infty) \|_{L^\infty_{T_n^\star}(\dot{B}^{0,\frac12}_\rb)}
      + \| \nabla_{\mathrm{h}} (u^n, u^\infty) \|_{L^2_{T_n^\star}(\dot{B}^{0,\frac12}_\rb)} \Bigr) \\
    &\leq b_J^{-1} \, 2C \rC_2 e^{C {(1 + 4\rC_1^2)(T + 4\rC_1^2)}}.
  \end{aligned}
\end{equation}

Given any $\varepsilon > 0$, we may choose $J = J_\varepsilon$ sufficiently large such that
\begin{equation}\label{eq6.4}
  b_{J_\varepsilon}^{-1} \, 2C \rC_2 e^{C {(1 + 4\rC_1^2)(T + 4\rC_1^2)}} \leq \frac{\varepsilon}{2},
\end{equation}
and then we conclude that we have (uniformly with respect to $n$), 
\begin{equation}\label{eq6.5}
  \| (\mathrm{Id} - \dot{S}_{J_\varepsilon}^{\mathrm{v}})(u^n - u^\infty) \|_{L^\infty_{T_n^\star}(\dot{B}^{0,\frac12})}
  + \| (\mathrm{Id} - \dot{S}_{J_\varepsilon}^{\mathrm{v}}) \nabla_{\mathrm{h}} (u^n - u^\infty) \|_{L^2_{T_n^\star}(\dot{B}^{0,\frac12})} \leq \frac{\varepsilon}{2}.
\end{equation}

\noindent ${\Large\bullet}$ Estimates for the low-frequency part $\dot{S}_J^{\mathrm{v}}(u^n - u^\infty)$.

For the low-frequency part, we first observe from \eqref{S1eq1a}  that
\[
\| \dot{S}_J^{\mathrm{v}} f \|_{\dot{B}^{0,\frac12}} \leq \sum_{j\leq J-1} 2^{j/2}  \| \dot{\Delta}_j^{\mathrm{v}} f \|_{L^2} 
\leq \|f\|_{\dBl^{0,\frac12}} + \Bigl( \sum_{j=0}^{J-1} 2^{2j} \Bigr)^{1/2} \| f\|_{\dHh^{0,-\frac12}} \leq 2^J \| f \|_{X},
\]
which implies that
\begin{equation}\label{S6eq3}
\begin{aligned}
\| \dot{S}_J^{\mathrm{v}} (u^n - u^\infty) \|_{L^\infty_{T_n^\star}(\dot{B}^{0,\frac12})}
&+ \| \dot{S}_J^{\mathrm{v}} \nabla_{\mathrm{h}} (u^n - u^\infty) \|_{L^2_{T_n^\star}(\dot{B}^{0,\frac12})} \\
&\leq 2^J \Bigl( \| u^n - u^\infty \|_{L^\infty_{T_n^\star}(X)}
+ \| \nabla_{\mathrm{h}} (u^n - u^\infty) \|_{L^2_{T_n^\star}(X)} \Bigr).
\end{aligned}
\end{equation}

By virtue of \eqref{eq6.1} and \eqref{eq6.3}, for $t \leq T_n^\star$, we have
\begin{align*}
{\rm B}_n(t) \eqdefa &\int_0^t \Bigl( 1 + \| (u^n, u^\infty)(\tau) \|_{\dot{B}^{0,\frac12}}^2 \Bigr)
 \Bigl( 1 + \| \nabla_{\mathrm{h}} (u^n, u^\infty)(\tau) \|_{\dot{B}^{0,\frac12}}^2 \Bigr) \, d\tau\\
 \leq& {\left(1 + 9\rC_1^2\right)\left(T + 9\rC_1^2\right).}
\end{align*}
Then we deduce from Osgood's lemma and \eqref{eq1.9}
 that if $\| u_0^n - u^\infty_0 \|_{X}$ is sufficiently small, then
\begin{equation}\label{S6eq4}
\begin{aligned}
\| (u^n - &u^\infty) \|_{ L^\infty_{T_n^\star}(X)}^2 + \| \nabla_{\mathrm{h}} (u^n - u^\infty) \|_{ L^2_{T_n^\star}(X)}^2 \\
&\leq Ce^{C {\rm B}_n({T_n^\star})} \exp \Bigl(- \Bigl(\ln \frac{1}{C \|u_0^n - u^\infty_0\|_{X}^2}\Bigr)^{\exp (-C {\rm B}_n({T_n^\star}))}  \Bigr).
\end{aligned}
\end{equation}

Since
\[
\| u_0^n - u^\infty_0 \|_{X} \leq \| u_0^n - u^\infty_0 \|_{\dot{B}^{0,\frac12}} \to 0 \quad \text{as } n \to \infty,
\]
we deduce from \eqref{S6eq3} and \eqref{S6eq4} that for  $J = J_\varepsilon$, which is determined by \eqref{eq6.4}, there exists a large enough integer $N_\varepsilon \in \mathbb{N}$ such that for all $n > N_\varepsilon$,
\begin{equation}\label{eq6.6}
\| \dot{S}_{J_\varepsilon}^{\mathrm{v}} (u^n - u^\infty) \|_{L^\infty_{T_n^\star}(\dot{B}^{0,\frac12})}
+ \| \dot{S}_{J_\varepsilon}^{\mathrm{v}} \nabla_{\mathrm{h}} (u^n - u^\infty) \|_{L^2_{T_n^\star}(\dot{B}^{0,\frac12})} \leq \frac{\varepsilon}{2}.
\end{equation}

By combining \eqref{eq6.5} with \eqref{eq6.6}, we conclude that for all $n > N_\varepsilon$,
\begin{equation}\label{eq6.7}
\| (u^n - u^\infty) \|_{L^\infty_{T_n^\star}(\dot{B}^{0,\frac12})}
+ \| \nabla_{\mathrm{h}} (u^n - u^\infty) \|_{L^2_{T_n^\star}(\dot{B}^{0,\frac12})} \leq \varepsilon,
\end{equation}
which together with \eqref{eq6.1} ensures
\begin{equation}\label{eq6.8}
\| u^n \|_{L^\infty_{T_n^\star}(\dot{B}^{0,\frac12})} + \| \nabla_{\mathrm{h}} u^n \|_{L^2_{T_n^\star}(\dot{B}^{0,\frac12})} \leq \rC_1 + \varepsilon,\quad \mbox{for all} \ n > N_\varepsilon.
\end{equation}

By taking $\varepsilon$ to be so small that $\varepsilon < \frac{\rC_1}{2} $ and using \eqref{eq:def Tn d} and \eqref{eq6.8}, a standard continuity argument ensures that $T^*_n > T_n^\star = T$ for all $n > N_{\varepsilon_0}$. This means that the lifespans of the solutions $u^n$ are all larger than $T$ for $n > N_{\varepsilon_0}$. Furthermore, \eqref{S1eq4} follows from \eqref{eq6.7}. This completes the proof of Theorem~\ref{thm2}.
\end{proof}
\smallskip

\section{The improved convergence for the initial data sequence}\label{Sect2}

In this section, we first show that given any initial data sequence $\{u^n_0\}_{n\in\N}$ which converges to $u_0^\infty$ in $\dot{B}^{0,\frac12}$, one can construct a special choice of the arithmetic function $\rb$ such that $u_0^n$ belongs to $\dot{B}^{0,\frac12}_\rb$ for {all} $n \in \mathbb{N}$. Then we present the proof of Proposition \ref{prop1.4}.

Since the Fourier weight only makes sense for high frequencies, it is sufficient to construct a non-decreasing sequence in $\N,$ instead of a function on $\mathbb{Z}$. We begin with the simple case that given any $\ell^1(\mathbb{N})$ sequence, it belongs to some weighted $\ell^1(\N)$ space.

\begin{lem}\label{lem2.1}
{\sl Let ${\rm d} = \{ d_j \}_{j \in \mathbb{N}}$ be a sequence such that $d_j \geq 0$ and $\sum_{j=0}^\infty d_j < \infty$. Then there exists a non-decreasing positive sequence $\rb = \{ b_j \}_{j \in \mathbb{N}}$ such that
\beq\label{S2eqa}
\lim_{j \to \infty} b_j = \infty \quad \text{and} \quad \sum_{j=0}^\infty b_j d_j \leq 2 \sum_{j=0}^\infty d_j.
\eeq}
\end{lem}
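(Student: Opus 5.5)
The plan is to build $\rb$ as a step function from the tail sums of ${\rm d}$. Set $S\eqdefa\sum_{j\ge0}d_j$ and $R_k\eqdefa\sum_{j\ge k}d_j$. Then $R_k$ is non-increasing and tends to $0$. If $S=0$, every $d_j$ vanishes and any choice such as $b_j=j+1$ works, so assume $S>0$.

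First choose an increasing sequence of indices $0=N_0<N_1<N_2<\cdots$ with $R_{N_k}\le 2^{-k}S$ for every $k\ge1$. This is possible because $R_k\to0$. Then define $b_j\eqdefa k+1$ for $N_k\le j<N_{k+1}$. This $\rb$ is positive, non-decreasing, and tends to $\infty$, since each block is finite and the indices $N_k$ increase.

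Next estimate the weighted sum by splitting it into blocks:
\[
\sum_{j\ge0} b_j d_j=\sum_{k\ge0}(k+1)\sum_{N_k\le j<N_{k+1}} d_j .
\]
The block sums could be bounded crudely by $R_{N_k}$, but it is cleaner to use the layer-cake form. Writing $b_j=1+\sum_{k\ge1}\mathbf 1_{\{j\ge N_k\}}$ gives
\[
\sum_{j} b_j d_j = S+\sum_{k\ge1}R_{N_k}\le S+\sum_{k\ge1}2^{-k}S=2S,
\]
which is exactly \eqref{S2eqa}.

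There is no real obstacle here. The only points needing care are getting the constant exactly $2$, which the layer-cake identity delivers cleanly, and the degenerate case $S=0$. If later use (as in the subsequent Remark) also requires $b_j\le b_{j+1}\le 2b_j$, this holds automatically: consecutive values differ by at most $1$ and $b_j\ge1$, so $b_{j+1}\le b_j+1\le 2b_j$.
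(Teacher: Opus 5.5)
Your proof is correct and uses the paper's construction: a step function $b_j=k+1$ on blocks $[N_k,N_{k+1})$ whose endpoints are chosen so that the tail sums fall below a threshold. The only difference is the summation: the paper takes thresholds $4^{-k}S$ and bounds each block sum crudely by its tail, giving $\sum_k (k+1)4^{-k}\le 2$, whereas your exact layer-cake identity $\sum_j b_jd_j=S+\sum_{k\ge1}R_{N_k}$ lets you get the same constant $2$ with the weaker thresholds $2^{-k}S$.
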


\begin{proof}
We define the sequence $\{ J_k \}_{k \in \mathbb{N}}$ by induction:
\[
J_0 \eqdefa 0, \qquad
J_{k+1} \eqdefa \min \Bigl\{ J \in \mathbb{N} : J > J_k,\; \sum_{j=J}^\infty d_j \leq 4^{-(k+1)} \sum_{j=0}^\infty d_j \Bigr\},
\]
which implies that for all $k$,
\begin{equation}\label{eq2.1}
\sum_{j=J_k}^\infty d_j \leq 4^{-k} \sum_{j=0}^\infty d_j.
\end{equation}
Then we define $b_j = k+1$ for $J_k \leq j < J_{k+1}$.

Now we compute
\[
\sum_{j=0}^\infty b_j d_j \leq \sum_{k=0}^\infty (k+1) \sum_{j=J_k}^{J_{k+1}-1} d_j
\leq \left( \sum_{k=0}^\infty (k+1) 4^{-k} \right) \sum_{j=0}^\infty d_j \leq 2 \sum_{j=0}^\infty d_j,
\]
where we used \eqref{eq2.1} and $\sum_{k=0}^\infty (k+1) 4^{-k} \leq \sum_{k=0}^\infty 2^{-k} \leq 2$. This finishes the proof of \eqref{S2eqa}.
\end{proof}

Next, we turn to the case with a sequence of $\ell^1(\mathbb{N})$ elements:

\begin{lem}\label{lem2.2}
{\sl For each $n \in \mathbb{N},$ let ${\rm d}^n = \{ d_j^n \}_{j \in \mathbb{N}}$  be sequences of positive numbers satisfying
\[
\lim_{n \to \infty} \sum_{j=0}^\infty d_j^n = 0.
\]
Then there exists a non-decreasing positive sequence $\rb = \{ b_j \}_{j \in \mathbb{N}}$ such that
\beq
\label{S2eqb}
\lim_{j \to \infty} b_j = \infty \quad \text{and} \quad \lim_{n \to \infty} \sum_{j=0}^\infty b_j d_j^n = 0.
\eeq}
\end{lem}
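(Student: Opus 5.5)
We will build $\rb$ by a diagonal version of the block construction in Lemma~\ref{lem2.1}. The key observation is that for each fixed $n$ the tail $\sum_{j\geq J} d^n_j$ tends to $0$ as $J\to\infty$. Also, $\sup_{n\geq M}\sum_j d^n_j\to 0$ as $M\to\infty$. So high $n$ are uniformly small, and each of the finitely many low $n$ has small tails. Set $S_n\eqdefa\sum_{j\geq0} d^n_j$. This is finite for every $n$, at least for $n$ large, and we may discard finitely many $n$ or assume finiteness for all $n$, as the statement implicitly does. Write $\sigma_M\eqdefa\sup_{n\geq M} S_n$, so that $\sigma_M\to0$.

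\textbf{Construction.} We first choose indices $M_1<M_2<\cdots$ with $\sigma_{M_k}\leq 4^{-k}$. Next we define $J_0\eqdefa0$ and, inductively, choose $J_{k}>J_{k-1}$ so large that
\[
\sum_{j\geq J_k} d^n_j\leq 4^{-k}\quad\text{for all } n< M_k .
\]
This is possible because only finitely many $n$ are involved and each tail tends to $0$. Then we put $b_j\eqdefa k+1$ for $J_k\leq j<J_{k+1}$. The resulting sequence is positive, non-decreasing and tends to infinity. If one also wants $b_{j+1}\le 2b_j$, which the remark requires, this already holds, since consecutive values $k+1,k+2$ satisfy it.

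\textbf{Estimate.} Fix $n$ and let $K=K(n)$ be the largest $k$ with $M_k\leq n$, taking $K=0$ if there is none. Since $n<M_k$ for all $k>K$, the construction gives $\sum_{j\geq J_k}d^n_j\leq 4^{-k}$ for every $k\geq K+1$. Splitting the sum at $J_{K+1}$ and summing by blocks as in Lemma~\ref{lem2.1}, we get
\[
\sum_j b_jd^n_j\leq (K+1)\,S_n+\sum_{k\geq K+1}(k+1)\sum_{j\geq J_k}d^n_j\leq (K+1)S_n+\sum_{k\geq K+1}(k+1)4^{-k}.
\]
If $K\geq1$, then $n\geq M_K$, so $S_n\leq\sigma_{M_K}\leq 4^{-K}$, and the first term is at most $(K+1)4^{-K}$. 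Since $M_k\to\infty$, we have $K(n)\to\infty$ as $n\to\infty$. Both terms therefore tend to $0$, which gives \eqref{S2eqb}.

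\textbf{Main obstacle.} The only delicate point is coordinating the two limits, $j\to\infty$ for a fixed $n$ and $n\to\infty$ uniformly. The weight $b_j$ must grow slowly enough to be absorbed by the uniform smallness of $S_n$ for $n\geq M_k$, yet each block boundary $J_k$ must be pushed far enough out to control the finitely many earlier $n$. Tying the $k$-th block to both $M_k$ and $J_k$ with the same factor $4^{-k}$ resolves this, because $\sum_k (k+1)4^{-k}$ converges.
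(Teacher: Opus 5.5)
Your proposal is correct and is essentially the paper's argument. Your $M_k$ play the role of the paper's $N_k$, giving uniform smallness $\sum_j d^n_j\le 4^{-k}$ beyond them; your $J_k$ are chosen exactly as in the paper to control the tails of the finitely many earlier $d^n$; the weight is $b_j=k+1$ on $[J_k,J_{k+1})$; and the sum is split at the block determined by $n$. The only difference is cosmetic: you bound the head by $(K+1)S_n$ using $b_j\le K+1$ for $j<J_{K+1}$, which is slightly sharper than the paper's $\bigl(\sum_{k\le k_0}(k+1)\bigr)\sum_j d^n_j$ and also handles the small-$k_0$ cases more cleanly.
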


\begin{proof}
First, we define the sequence $\{ N_k \}_{k \in \mathbb{N}}$ by induction:
\[
N_0 \eqdefa 0, \qquad
N_{k+1} \eqdefa \min \Bigl\{ N \in \mathbb{N} : N > N_k,\; \sum_{j=0}^\infty d_j^n \leq 4^{-(k+1)},\ \forall n > N \Bigr\},
\]
where $N_k$ is well-defined since $\lim_{n \to \infty} \sum_{j=0}^\infty d_j^n = 0$. In particular, for all $k \geq 1$, the $N_k$ defined above satisfies
\begin{equation}\label{eq2.2}
\sum_{j=0}^\infty d_j^n \leq 4^{-k}, \qquad \forall n > N_k.
\end{equation}

Next  we define another sequence $\{ J_k \}_{k \in \mathbb{N}}$ by induction:
\[
J_0 \eqdefa 0, \qquad
J_{k+1}\eqdefa \min \Bigl\{ J \in \mathbb{N} : J > J_k,\; \sum_{j=J}^\infty d_j^n \leq 4^{-(k+1)},\ \forall n \leq N_{k+1} \Bigr\},
\]
which implies that for all $k$,
\begin{equation}\label{eq2.3}
\sum_{j=J_k}^\infty d_j^n \leq 4^{-k}, \qquad \forall n \leq N_k.
\end{equation}
Again, we define $b_j = k+1$ for $J_k \leq j < J_{k+1}$.

For any given $n$, there exists some $k_0$ such that $N_{k_0} < n \leq N_{k_0+1}$. We compute
\[
\sum_{j=0}^\infty b_j d_j^n \leq \sum_{k=0}^\infty (k+1) \sum_{j=J_k}^{J_{k+1}-1} d_j^n
\leq \left( \sum_{k=0}^{k_0} (k+1) \right) \sum_{j=0}^\infty d_j^n
+ \sum_{k=k_0+1}^\infty (k+1) \sum_{j=J_k}^\infty d_j^n.
\]
For $n > N_{k_0}$, we get, applying \eqref{eq2.2}, that
\[
\left( \sum_{k=0}^{k_0} (k+1) \right) \sum_{j=0}^\infty d_j^n \leq k_0(k_0+1) 4^{-k_0}.
\]
Whereas for all $k \geq k_0+1$, we have ${N_k \geq N_{k_0+1} \geq n}$, so that we deduce from \eqref{eq2.3}  that
\[
\sum_{k=k_0+1}^\infty (k+1) \sum_{j=J_k}^\infty d_j^n \leq \sum_{k=k_0+1}^\infty (k+1) 4^{-k} \leq 2^{-k_0}.
\]

By summarizing the above estimates, we conclude that
\[
\sum_{j=0}^\infty b_j d_j^n \leq k_0(k_0+1) 4^{-k_0} + 2^{-k_0}, \qquad \forall n \in (N_{k_0}, N_{k_0+1}].
\]
Noting that $N_{k_0} \to \infty$ as $k_0 \to \infty$, we obtain
\[
\lim_{n \to \infty} \sum_{j=0}^\infty b_j d_j^n \leq \lim_{k_0 \to \infty} \bigl( k_0(k_0+1) 4^{-k_0} + 2^{-k_0} \bigr) = 0,
\]
which finishes the proof of Lemma \ref{lem2.2}.
\end{proof}

Now we present the proof of Proposition \ref{prop1.4}.

\begin{proof}[Proof of Proposition \ref{prop1.4}]
First, we denote $d_j \eqdefa 2^{\frac{j}{2}} \| \dot{\Delta}_j^{\mathrm{v}} u_0^\infty \|_{L^2}$, which belongs to $\ell^1(\mathbb{Z})$ because $u_0^\infty \in \dot{B}^{0,\frac12}$. Then we get, by applying Lemma \ref{lem2.1} to  $\{d_j\}_{j \in \mathbb{N}}$,  that there exists a non-decreasing positive sequence $\rb^1 = \{ b_j^1 \}_{j \in \mathbb{N}}$ such that
\begin{equation}\label{eq2.6}
\lim_{j \to \infty} b_j^1 = \infty \quad \text{and} \quad
\sum_{j=0}^\infty b_j^1 d_j \leq 2 \sum_{j=0}^\infty d_j \leq 2 \| u_0^\infty \|_{\dot{B}^{0,\frac12}}.
\end{equation}

Let us denote $d_j^n \eqdefa 2^{\frac{j}{2}} \| \dot{\Delta}_j^{\mathrm{v}} (u_0^n - u_0^\infty) \|_{L^2}$, which satisfies
\[
\lim_{n \to \infty} \sum_{j=0}^\infty d_j^n \leq \lim_{n \to \infty} \| u_0^n - u_0^\infty \|_{\dot{B}^{0,\frac12}} = 0.
\]
We then get, by apply Lemma \ref{lem2.2}, that there exists  a non-decreasing positive sequence $\rb^2 = \{ b_j^2 \}_{j \in \mathbb{N}}$ such that
\begin{equation}\label{eq2.7}
  \lim_{j \to \infty} b_j^2 = \infty \quad \text{and} \quad
  \lim_{n \to \infty} \sum_{j=0}^\infty b_j^2 d_j^n = 0.
\end{equation}

Finally, we define the arithmetic function $b_j = \min( b_j^1, b_j^2 )$ for $j \geq 0$, and extend it as $b_j = b_0$ for all $j < 0$. It is clear that $\rb$ is also a non-decreasing arithmetic function and $\lim_{j \to \infty} b_j = \infty$. Then it follows from  the estimates \eqref{eq2.6}, \eqref{eq2.7} and $u^n_0 \to u_0^\infty$ in $\dot{B}^{0,\frac12}$ that
\begin{equation}
  \|u^\infty_0\|_{\dot{B}^{0,\frac12}_\rb} \leq b_0 \sum_{j<0} 2^{\frac{j}{2}} \| \dot{\Delta}_j^{\mathrm{v}} u^\infty_0 \|_{L^2} + \sum_{j\geq 0} b^1_j d_j \leq (b_0 + 2) \| u^\infty_0 \|_{\dot{B}^{0,\frac12}},
\end{equation}
and
\begin{equation}
  \begin{aligned}
    \lim_{n \to \infty} \| u^n_0 - u^\infty_0 \|_{\dot{B}^{0,\frac12}_\rb}
    &\leq b_0 \lim_{n \to \infty} \sum_{j<0} 2^{\frac{j}{2}} \| \dot{\Delta}_j^{\mathrm{v}} (u^n_0 - u^\infty_0) \|_{L^2} + \lim_{n \to \infty} \sum_{j\geq 0} b_j d_j^n \\
    &\leq b_0 \lim_{n \to \infty} \| u^n_0 - u^\infty_0 \|_{\dot{B}^{0,\frac12}} + \lim_{n \to \infty} \sum_{j\geq 0} b_j^2 d_j^n = 0.
  \end{aligned}
\end{equation}
The boundedness and convergence of $\{ u_0^n \}_{n \in \mathbb{N}}$ in $\dot{B}^{0,\frac12}_\rb$ are direct consequences of the above construction. This finishes the proof of Proposition \ref{prop1.4}.
\end{proof}

\smallskip

\section{Propagation of the slightly higher regularity}\label{Sect3}

In this section, we aim to present the proof of Proposition \ref{prop1.5}, namely, the propagation of the $\dB^{0,\f12}_\rb$ regularity of the initial data.  The key ingredient lies in 
the following time-weighted energy estimate:

\begin{prop}\label{prop3.1}
{\sl Let $u$ be a smooth enough solution of  $(ANS)$ on $[0,T].$ Then there exists some { universal large} constant $C_0>0$ such that
\begin{equation}\label{eq:prop3.1}
\begin{aligned}
&\|\frak{f}u\|_{\widetilde L^\infty_T(B^{0,\f12}_\rb)} + \| \ff\nabla_\h u\|_{\widetilde L^2_T(B^{0,\f12}_\rb)}
\leq C \|u_0\|_{B^{0,\f12}_\rb}\with \frak{f}(t)\eqdefa e^{-C_0 {\rm A}(t)},
\end{aligned}
\end{equation}
where ${\rm A}(t)$ is defined as in \eqref{eq:prop1.5}.
}
\end{prop}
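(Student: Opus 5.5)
We follow the Paicu/Chemin–Zhang strategy for $\dot B^{0,\frac12}$ estimates, with a time weight that absorbs the nonlinear terms. Set $u_\ff\eqdefa\ff u$. Since $\ff'(t)=-C_0{\rm A}'(t)\ff(t)$, we have
\[
\p_t u_\ff + C_0{\rm A}'(t)u_\ff + \ff\,(u\cdot\na u) - \D_\h u_\ff = -\na p_\ff .
\]
Apply $\dDv_j$ and take the $L^2$ inner product with $\dDv_j u_\ff$. The pressure term vanishes by the divergence-free condition. Integrating in time, for every $j$ we get
\[
\|\dDv_j u_\ff\|_{L^\infty_t(L^2)}^2+2\|\na_\h\dDv_j u_\ff\|_{L^2_t(L^2)}^2+2C_0\int_0^t{\rm A}'\|\dDv_j u_\ff\|_{L^2}^2\,d\tau\le \|\dDv_j u_0\|_{L^2}^2+2\int_0^t\ff\,\bigl|(\dDv_j(u\cdot\na u)\mid\dDv_j u_\ff)\bigr|\,d\tau.
\]

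The key step is the anisotropic trilinear estimate, in the weighted form needed here:
\[
\int_0^t\ff\,\bigl|(\dDv_j(u\cdot\na u)\mid \dDv_j u_\ff)\bigr|\,d\tau\lesssim d_j\,b_j^{-2}2^{-j}\Bigl(\int_0^t {\rm A}'\ldots\Bigr)^{\frac12}\|u_\ff\|_{\wt L^2_{t,{\rm A}'}(\dB^{0,\frac12}_\rb)}\,\|\na_\h u_\ff\|_{\wt L^2_t(\dB^{0,\frac12}_\rb)}.
\]
More precisely, we follow Paicu's Lemma, splitting $u\cdot\na u=u^\h\cdot\na_\h u+u^3\p_3u$ and using Bony's vertical paraproduct decomposition. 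For the horizontal part we use the anisotropic Bernstein inequality $\|\dDv_k a\|_{L^2_\h(L^\infty_\v)}\lesssim 2^{k/2}\|\dDv_k a\|_{L^2}$ and the Gagliardo–Nirenberg inequality in $x_\h$, which gives $\|a\|_{L^4_\h}\lesssim\|a\|_{L^2_\h}^{1/2}\|\na_\h a\|_{L^2_\h}^{1/2}$. For the vertical term we write $\p_3u^3=-\dive_\h u^\h$, so no vertical derivative is lost, and we handle the commutator $[\dDv_j,\dSv_{k-1}u^3]\p_3\dDv_k u$ as usual. In every paraproduct the low-frequency factor is measured in the unweighted $\dB^{0,\frac12}$, which produces the $\|u\|_{\dB^{0,\frac12}}\|\na_\h u\|_{\dB^{0,\frac12}}$-type coefficients. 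These are dominated, after Young's inequality, by $({\rm A}')^{1/2}$ times horizontal gradients. The high-frequency factor carries the weight $b_k$.

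The condition $b_j\le b_{j+1}\le 2b_j$ is what lets us transfer the weights. Whenever a sum over $k\ge j-N$ or $|k-j|\le 4$ appears, we have $b_j\le b_k$ for $k\ge j$, and $b_j\le 2^{N}b_k$ for $j-N\le k$. The only delicate part is the remainder term $\sum_{k\ge j-N}$, where the output frequency $j$ is below the input frequencies. There monotonicity gives $b_j\lesssim b_k$, and the sum $\sum_{k\ge j}2^{(j-k)/2}$ converges. So every term is bounded with the weight on the highest-frequency factor, which is the one measured in $\dB_\rb$.

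With the weighted trilinear bound in hand, we take the square root of the $j$-th inequality, multiply by $b_j2^{j/2}$ and sum over $j$. The nonlinear contribution is of the form
\[
C\Bigl(\|u_\ff\|_{\wt L^2_{t,{\rm A}'}(\dB^{0,\frac12}_\rb)}\,\|\na_\h u_\ff\|_{\wt L^2_t(\dB^{0,\frac12}_\rb)}\Bigr)^{\frac12},
\]
where $\|\cdot\|_{\wt L^2_{t,{\rm A}'}}$ denotes the Chemin–Lerner norm with time weight ${\rm A}'$, as in \cite{PZ11}. By Young's inequality this is absorbed by the dissipation term plus the $\sqrt{C_0}$-weighted left-hand term once $C_0$ is large and universal. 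This yields \eqref{eq:prop3.1}.

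The main obstacle is the vertical convection term $u^3\p_3u$ in the critical setting. A naive estimate loses a vertical derivative. It must be handled through the commutator structure and $\p_3u^3=-\dive_\h u^\h$, while keeping the $b$-weights on the correct factor. The remainder-type interactions, where the weight sits at the low output frequency, are the point where the slow growth $b_{j+1}\le 2b_j$ and the monotonicity are indispensable.
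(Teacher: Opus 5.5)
Your proposal follows essentially the same route as the paper: the dyadic energy estimate with time weight $\ff=e^{-C_0{\rm A}}$, and the vertical Bony decomposition with the low-frequency factor bounded by powers of ${\rm A}'$. It also uses the commutator together with $\p_3u^3=-\dive_\h u^\h$ for $u^3\p_3u$, the weight transfer via $b_j\le b_{j+1}\le 2b_j$, and summation in $j$ with absorption for $C_0$ large. One correction: your displayed trilinear bound should read $d_j^2b_j^{-2}2^{-j}\bigl(\|\na_\h u_\ff\|\,\|u_\ff\|_{\wt L^2_{t,{\rm A}'}}+\|\na_\h u_\ff\|^{3/2}\|u_\ff\|_{\wt L^2_{t,{\rm A}'}}^{1/2}\bigr)$, with all norms in $\dB^{0,\frac12}_\rb$ and no extra $(\int_0^t{\rm A}')^{1/2}$ factor, since with a single power of $d_j$ or that factor the square-root summation in $j$ and the universality of $C_0$ would both fail.
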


\begin{proof} We get, by
first  applying $\dDv_j$ to the momentum equations of $(ANS)$ and then taking the $L^2$ inner product of the resulting equation with $\dDv_j u$, that
\begin{eqnarray}\label{eq3.2}
\frac12 \frac{d}{dt} \|\dDv_j u(t)\|_{L^2}^2 + \|\nabla_\h \dDv_j u(t)\|_{L^2}^2 = - \bigl( \dDv_j (u\cdot\nabla u) \mid \dDv_j u \bigr).
\end{eqnarray}

We remark that to overcome the difficulty that one cannot use a Gronwall type argument in the framework of Chemin-Lerner type norms, below we shall use a time-weighted Chemin-Lerner type norm (see \cite{PZ11}). Indeed in view of the definition of the $\ff(t)$ given by \eqref{eq:prop3.1}, one has 
\begin{align*}
\int_0^t \ff^2(\tau) \frac{d}{d\tau} \|\dDv_j u(\tau)\|_{L^2}^2 \, d\tau =& \ff^2(t) \|\dDv_j u(t)\|_{L^2}^2 - \|\dDv_j u_0\|_{L^2}^2 \\
&+ 2C_0 \int_0^t \A'(\tau) \ff^2(\tau) \|\dDv_j u(\tau)\|_{L^2}^2 \, d\tau,
\end{align*}
so that we get, by first multiplying \eqref{eq3.2} by {$2\ff^2(t)$} and then integrating the resulting equation over $[0,t],$ that for all $t \in [0,T]$,
\begin{equation}\label{eq3.3}
\begin{aligned}
\ff^2(t)& \|\dDv_j u(t)\|_{L^2}^2 + 2C_0 \int_0^t \A'(\tau) \ff^2(\tau) \|\dDv_j u(\tau)\|_{L^2}^2 \, d\tau+ 2 \int_0^t \ff^2(\tau) \|\nabla_\h \dDv_j u(\tau)\|_{L^2}^2 \, d\tau \\
\leq& \|\dDv_j u_0\|_{L^2}^2 + 2 \int_0^t \ff^2(\tau)\bigl| \bigl( \dDv_j (u\cdot\nabla u) \mid \dDv_j u \bigr) \bigr| \, d\tau.
\end{aligned}
\end{equation}

Let us turn to the estimate of the nonlinear terms in \eqref{eq3.3}. Due to the anisotropic dissipation in system $(ANS),$ we write $u\cdot \nabla u = u^{\rm h} \cdot \nabla_\h u + u^3 \partial_3 u$ and handle these two terms separately.\smallskip

\noindent $\bullet$ Estimate of $\big| \bigl( \dDv_j (u^{\rm h} \cdot \nabla_\h u) \mid \dDv_j u \bigr) \big|$.\smallskip

By applying Bony's decomposition \eqref{homo bony v} to $u^{\rm h} \cdot \nabla_\h u$ in the vertical variable, we write
\[
u^{\rm h} \cdot \nabla_\h u = \dTv_{u^{\rm h}} \nabla_\h u + \dTv_{\nabla_\h u} u^{\rm h} + \dRv(u^{\rm h}, \nabla_\h u).
\]

Considering the support properties of the Fourier transform of the terms in {$\dTv_{u^{\rm h}} \nabla_\h u$,} we find
\[
\big| \bigl( \dDv_j (\dTv_{u^{\rm h}} \nabla_\h u) \mid \dDv_j u \bigr) \big|
\lesssim \sum_{|k-j|\leq 4} \| \dSv_{k-1} u^{\rm h} \|_{L^4_{\rm h}(L^\infty_{\rm v})} \| \dDv_k \nabla_\h u \|_{L^2} \| \dDv_j u \|_{L^4_{\rm h}(L^2_{\rm v})}.
\]
Yet it follows from Lemma \ref{lemBern}, the 2-D interpolation inequality:
\begin{equation}\label{interpolation inequality}
  \|a\|_{L^4(\R^2)}\lesssim \|a\|_{L^2(\R^2)}^{\f12}\|\na_\h a\|_{L^2(\R^2)}^{\f12},
\end{equation} 
 and the definition of $A(t)$ that
\begin{equation}\label{eq3.4a}
\begin{aligned}
\| \dSv_{k-1} u^{\rm h}(t) \|_{L^4_{\rm h}(L^\infty_{\rm v})}
&\lesssim \sum_{\ell\leq k-2} 2^{\frac{\ell}{2}} \| \dDv_\ell u^{\rm h}(t) \|_{L^4_{\rm h}(L^2_{\rm v})} \\
&\lesssim \sum_{\ell\leq k-2} 2^{\frac{\ell}{2}} \| \dDv_\ell u^{\rm h}(t) \|_{L^2}^{\frac12} \| \nabla_\h \dDv_\ell u^{\rm h}(t) \|_{L^2}^{\frac12} \\
&\lesssim \| u^{\rm h}(t) \|_{\dB^{0,\f12}}^{\frac12} \| \nabla_\h u^{\rm h}(t) \|_{\dB^{0,\f12}}^{\frac12} \lesssim \bigl( \A'(t) \bigr)^{\frac14},
\end{aligned}
\end{equation}
from which we infer
\begin{align*}
\int_0^t &\ff^2(\tau)\big| \bigl( \dDv_j (\dTv_{u^{\rm h}} \nabla_\h u) \mid \dDv_j u \bigr) \big| \, d\tau \\
\lesssim &\sum_{|k-j|\leq 4} \int_0^t \ff^2(\tau)\bigl( \A'(\tau) \bigr)^{\frac14} \| \dDv_k \nabla_\h u(\tau) \|_{L^2} \| \dDv_j u(\tau) \|_{L^2}^{\frac12} \| \nabla_\h \dDv_j u(\tau) \|_{L^2}^{\frac12} \, d\tau \\
\lesssim &\sum_{|k-j|\leq 4} \| \ff \dDv_k \nabla_\h u \|_{L^2_t(L^2)} \| \bigl( \A' \bigr)^{\frac12}
 \ff \dDv_j u\|_{L^2_t(L^2)}^{\frac12} \|\ff\dDv_j \nabla_\h u \|_{L^2_t(L^2)}^{\frac12} \\
\lesssim &\Bigl( \sum_{|k-j|\leq 4} d_k b_k^{-1} 2^{-\frac{k}{2}} \Bigr) d_j b_j^{-1} 2^{-\frac{j}{2}} \| \ff \nabla_\h u \|_{\widetilde L^2_t(\dB^{0,\f12}_\rb)}^{\frac32}  \| \bigl( \A'\bigr)^{\frac12} \ff u\|_{\widetilde L^2_t(\dB^{0,\f12}_\rb)}^{\frac12}.
\end{align*}
Due to the fact that $b_j \leq b_{j+1} \leq 2b_j$, we have $b_k^{-1} \lesssim b_j^{-1}$ for $|k-j|\leq 4$. As a consequence, we deduce that
\begin{equation}\label{eq3.4}
\begin{aligned}
&\int_0^t \ff^2(\tau) \big| \bigl( \dDv_j (\dTv_{u^{\rm h}} \nabla_\h u) \mid \dDv_j u \bigr) \big| \, d\tau \lesssim d_j^2 b_j^{-2} 2^{-j} \| \ff \nabla_\h u \|_{\widetilde L^2_t(\dB^{0,\f12}_\rb)}^{\frac32} \| \bigl( \A' \bigr)^{\frac12} \ff u \|_{\widetilde L^2_t(\dB^{0,\f12}_\rb)}^{\frac12}.
\end{aligned}
\end{equation}

Similarly, it follows from Lemma \ref{lemBern} and the definition of $A(t)$ that
\begin{equation}\label{eq3.5}
\begin{split}
\| \dSv_{k-1} \nabla_\h u(t) \|_{L^2_{\rm h}(L^\infty_{\rm v})}
&\lesssim \sum_{\ell\leq k-2} 2^{\frac{\ell}{2}} \| \dDv_\ell \nabla_\h u(t) \|_{L^2} \\
&\lesssim \| \nabla_\h u(t) \|_{\dB^{0,\f12}} \lesssim \bigl( \A'(t) \bigr)^{\frac12},
\end{split}
\end{equation}
from which, we infer
\begin{align*}
\int_0^t &\ff^2(\tau)\big| \bigl( \dDv_j  (\dTv_{\nabla_\h u} u^{\rm h}) \mid \dDv_j u \bigr) \big| \, d\tau \\
\lesssim &\sum_{|k-j|\leq 4} \int_0^t \ff^2(\tau)\| \dSv_{k-1} \nabla_\h u(\tau) \|_{L^2_{\rm h}(L^\infty_{\rm v})} \| \dDv_k u^{\rm h}(\tau) \|_{L^4_{\rm h}(L^2_{\rm v})} \| \dDv_j u(\tau) \|_{L^4_{\rm h}(L^2_{\rm v})} \, d\tau \\
\lesssim &\sum_{|k-j|\leq 4} \| \bigl( \A'\bigr)^{\frac12} \ff \dDv_k u \|_{L^2_t(L^2)}^{\frac12} \|\ff \dDv_k \nabla_\h u \|_{L^2_t(L^2)}^{\frac12}  \| \bigl( \A' \bigr)^{\frac12} \ff \dDv_j u\|_{L^2_t(L^2)}^{\frac12} \|\ff \dDv_j \nabla_\h u \|_{L^2_t(L^2)}^{\frac12} \\
\lesssim &\Bigl( \sum_{|k-j|\leq 4} d_k b_k^{-1} 2^{-\frac{k}{2}} \Bigr) d_j b_j^{-1} 2^{-\frac{j}{2}} \| \ff\nabla_\h u \|_{\widetilde L^2_t(\dB^{0,\f12}_\rb)} \| \bigl( \A'\bigr)^{\frac12} \ff u \|_{\widetilde L^2_t(\dB^{0,\f12}_\rb)}.
\end{align*}
This gives rise to
\begin{equation} \label{eq3.6}
\begin{aligned}
\int_0^t &\ff^2(\tau) \big| \bigl( \dDv_j (\dTv_{\nabla_\h u} u^{\rm h}) \mid \dDv_j u \bigr) \big| \, d\tau 
\lesssim  d_j^2 b_j^{-2} 2^{-j} \|\ff\nabla_\h u \|_{\widetilde L^2_t(\dB^{0,\f12}_\rb)} \| \bigl( \A'\bigr)^{\frac12} \ff u\|_{\widetilde L^2_t(\dB^{0,\f12}_\rb)}.
\end{aligned}
\end{equation}

Finally, for the remainder term, we deduce from Lemma \ref{lemBern} that
\begin{align*}
\int_0^t &\ff^2(\tau) \big| \bigl( \dDv_j \dRv(u^{\rm h}, \nabla_\h u) \mid \dDv_j u \bigr) \big| \, d\tau \\
\lesssim & 2^{\frac{j}{2}} \sum_{k \geq j-3} \int_0^t \ff^2(\tau) \| \dDv_k u^{\rm h}(\tau) \|_{L^4_{\rm h}(L^2_{\rm v})} \| \wt{\D}^\v_k \nabla_\h u(\tau) \|_{L^2} \| \dDv_j u(\tau) \|_{L^4_{\rm h}(L^2_{\rm v})} \, d\tau.
\end{align*}
It follows from a similar derivation of \eqref{eq3.5} that
\begin{align*}
\| \wt{\D}^\v_k \nabla_\h u(t) \|_{L^2} \lesssim 2^{-\frac{k}{2}} \| \nabla_\h u(t) \|_{\dB^{0,\f12}} \lesssim 2^{-\frac{k}{2}} \bigl( \A'(t) \bigr)^{\frac12},
\end{align*}
from which, we infer
\begin{equation}\label{eq3.7}
\begin{aligned}
\int_0^t &\ff^2(\tau) \big| \bigl( \dDv_j \dRv(u^{\rm h}, \nabla_\h u) \mid \dDv_j u \bigr) \big| \, d\tau \\
\lesssim & d_j b_j^{-1} \sum_{k \geq j-3} d_k b_k^{-1} 2^{-k} \| \ff \nabla_\h u \|_{\widetilde L^2_t(\dB^{0,\f12}_\rb)} \| \bigl( \A'\bigr)^{\frac12} \ff u\|_{\widetilde L^2_t(\dB^{0,\f12}_\rb)} \\
\lesssim & d_j^2 b_j^{-2} 2^{-j} \|\ff \nabla_\h u\|_{\widetilde L^2_t(\dB^{0,\f12}_\rb)} \| \bigl( \A'\bigr)^{\frac12} \ff u\|_{\widetilde L^2_t(\dB^{0,\f12}_\rb)},
\end{aligned}
\end{equation}
where in the last step we used $b_k^{-1}\leq b_{j-3}^{-1}\leq 8 b_j^{-1} $ from the fact that $\{ b_k \}$ is non-decreasing and slowly increasing.

By summarizing the estimates \eqref{eq3.4}, \eqref{eq3.6}, and \eqref{eq3.7}, we achieve
\begin{equation}\label{eq3.8}
\begin{aligned}
\int_0^t &\ff^2(\tau)\big| \bigl( \dDv_j (u^{\rm h} \cdot \nabla_\h u) \mid \dDv_j u \bigr) \big| \, d\tau \\
\lesssim & d_j^2 b_j^{-2} 2^{-j} \Bigl( \|\ff \nabla_\h u\|_{\widetilde L^2_t(\dB^{0,\f12}_\rb)} \| \bigl( \A' \bigr)^{\frac12} \ff u \|_{\widetilde L^2_t(\dB^{0,\f12}_\rb)} + \| \ff \nabla_\h u \|_{\widetilde L^2_t(\dB^{0,\f12}_\rb)}^{\frac32} \| \bigl( \A'\bigr)^{\frac12} \ff u\|_{\widetilde L^2_t(\dB^{0,\f12}_\rb)}^{\frac12} \Bigr).
\end{aligned}
\end{equation}

\noindent $\bullet$ Estimate of $\big| \bigl( \dDv_j (u^3 \partial_3 u) \mid \dDv_j u \bigr) \big|$.

By applying Bony's decomposition \eqref{homo bony v} to $u^3 \partial_3 u$, we write
\[
u^3 \partial_3 u = \dTv_{u^3} \partial_3 u + \dTv_{\partial_3 u} u^3 + \dRv(u^3, \partial_3 u).
\]

We first get,  by using a standard commutator argument (see \cite{CDGG} for instance), that
\begin{equation}\label{eq3.8a}
  \begin{aligned}
\bigl( \dDv_j (T^{\rm v}_{u^3} \partial_3 u) \mid \dDv_j u \bigr)
=&\sum_{|k-j|\leq 4} \Bigl( \bigl( [\dDv_j; \dSv_{k-1} u^3] \dDv_k \partial_3 u \mid \Delta_j^{\rm v} u \bigr) \\
&\qquad\qquad + \bigl( (\dSv_{k-1} u^3 - \dSv_{j} u^3) \dDv_j \dDv_k \partial_3 u \mid \dDv_j u \bigr) \Bigr) \\
&+ \bigl( \dSv_{j} u^3 \dDv_j \partial_3 u \mid \dDv_j u \bigr) \\
\eqdefa & I_1 + I_2 + I_3.
\end{aligned}
\end{equation}

By applying Lemma 2.97 of \cite{BCD}, Lemma \ref{lemBern}, and using $\partial_3 u^3 = -\operatorname{div}_{\rm h} u^{\rm h}$, we deduce from \eqref{eq3.5} that
\begin{align*}
|I_1| &\lesssim \sum_{|k-j|\leq 4} 2^{-j} \| \dSv_{k-1} \partial_3 u^3 \|_{L^2_{\rm h}(L^\infty_{\rm v})} \| \dDv_k \partial_3 u \|_{L^4_{\rm h}(L^2_{\rm v})} \| \dDv_j u \|_{L^4_{\rm h}(L^2_{\rm v})} \\
&\lesssim \sum_{|k-j|\leq 4} 2^{k-j} \bigl( \A'(t) \bigr)^{\frac12} \| \dDv_k u \|_{L^4_{\rm h}(L^2_{\rm v})}\| \dDv_j u \|_{L^4_{\rm h}(L^2_{\rm v})}.
\end{align*}
The term $I_2$ shares the same estimate.

For $I_3$, by using integration by parts and $\partial_3 u^3 = -\operatorname{div}_{\rm h} u^{\rm h}$, we obtain
\begin{align*}
|I_3| \lesssim \| \dSv_j \partial_3 u^3 \|_{L^2_{\rm h}(L^\infty_{\rm v})} \| \dDv_j u \|_{L^4_{\rm h}(L^2_{\rm v})}^2 
\lesssim \bigl( \A'(t) \bigr)^{\frac12} \| \dDv_j u \|_{L^4_{\rm h}(L^2_{\rm v})}^2.
\end{align*}

As a consequence, it comes out
\begin{align*}
\int_0^t &\ff^2(\tau) \big| \bigl( \dDv_j (T^{\rm v}_{u^3} \partial_3 u) \mid \dDv_j u \bigr) \big| \, d\tau \\
\lesssim &\sum_{|k-j|\leq 4} \int_0^t \ff^2(\tau) \bigl( A'(\tau) \bigr)^{\frac12} \| \dDv_k u(\tau) \|_{L^2}^{\frac12} \| \nabla_{\rm h} \dDv_k u(\tau) \|_{L^2}^{\frac12}\| \dDv_j u(\tau) \|_{L^2}^{\frac12} \| \nabla_{\rm h} \dDv_j u(\tau) \|_{L^2}^{\frac12} \, d\tau \\
\lesssim &\Bigl( \sum_{|k-j|\leq 4} d_k b_k^{-1} 2^{-\frac{k}{2}} \Bigr) d_j b_j^{-1} 2^{-\frac{j}{2}} \| \ff \nabla_{\rm h} u\|_{\widetilde L^2_t(\dB^{0,\f12}_\rb)} \| \bigl( \A' \bigr)^{\frac12} \ff u\|_{\widetilde L^2_t(\dB^{0,\f12}_\rb)},
\end{align*}
which implies
\begin{equation}\label{eq3.9}
\begin{aligned}
\int_0^t &\ff^2(\tau) \big| \bigl( \dDv_j (\dTv_{u^3} \partial_3 u) \mid \dDv_j u \bigr) \big| \, d\tau \lesssim d_j^2 b_j^{-2} 2^{-j} \|\ff \nabla_{\rm h} u\|_{\widetilde L^2_t(\dB^{0,\f12}_\rb)} \| \bigl( \A'\bigr)^{\frac12} \ff u \|_{\widetilde L^2_t(\dB^{0,\f12}_\rb)}.
\end{aligned}
\end{equation}

Next, we compute
\begin{align*}
\int_0^t &\ff^2(\tau) \big| \bigl( \dDv_j (\dTv_{\partial_3 u} u^3) \mid \dDv_j u \bigr) \big| \, d\tau \\
\lesssim &\sum_{|k-j|\leq 4} \int_0^t \ff^2(\tau) \| \dSv_{k-1} \partial_3 u(\tau) \|_{L^4_{\rm h}(L^\infty_{\rm v})} \| \dDv_k u^3(\tau) \|_{L^2} \| \dDv_j u(\tau) \|_{L^4_{\rm h}(L^2_{\rm v})} \, d\tau.
\end{align*}
Along the same line to the derivation of \eqref{eq3.4a}, we get, by using Lemma \ref{lemBern} and the definition of ${\rm A}(t)$,  that 
\begin{equation}\notag
\begin{split}
\| \dSv_{k-1} \partial_3 u(t) \|_{L^4_{\rm h}(L^\infty_{\rm v})}
&\lesssim 2^k \sum_{\ell\leq k-2} 2^{\frac{\ell}{2}} \| \dDv_\ell u(t) \|_{L^2}^{\frac12} \| \dDv_\ell \nabla_{\rm h} u(t) \|_{L^2}^{\frac12} \\
&\lesssim 2^k \| u(t) \|_{\dB^{0,\f12}}^{\frac12} \| \nabla_{\rm h} u(t) \|_{\dB^{0,\f12}}^{\frac12} \lesssim 2^k \bigl( \A'(t) \bigr)^{\frac14}.
\end{split}
\end{equation}
Whereas it follows from Lemma \ref{lemBern} and $\partial_3 u^3 = -\operatorname{div}_{\rm h} u^{\rm h}$ that
\begin{equation}\label{eq3.11}
\| \dDv_k u^3 \|_{L^2} \lesssim 2^{-k} \| \dDv_k \partial_3 u^3 \|_{L^2} \lesssim 2^{-k} \| \dDv_k \nabla_{\rm h} u^\h \|_{L^2}.
\end{equation}
Therefore, we deduce that
\begin{align*}
\int_0^t &\ff^2(\tau) \big| \bigl( \dDv_j (T^{\rm v}_{\partial_3 u} u^3) \mid \dDv_j u \bigr) \big| \, d\tau \\
\lesssim &\sum_{|k-j|\leq 4} \| \ff \dDv_k \nabla_{\rm h} u \|_{L^2_t(L^2)} \| \bigl( \A'\bigr)^{\frac12} \ff \dDv_j u \|_{L^2_t(L^2)}^{\frac12}\|\ff \dDv_j \nabla_{\rm h} u\|_{L^2_t(L^2)}^{\frac12} \\
\lesssim &\Bigl( \sum_{|k-j|\leq 4} d_k b_k^{-1} 2^{-\frac{k}{2}} \Bigr) d_j b_j^{-1} 2^{-\frac{j}{2}}
 \|\ff \nabla_{\rm h} u\|_{\widetilde L^2_t(\dB^{0,\f12}_\rb)}^{\frac32}
  \| \bigl( \A' \bigr)^{\frac12} \ff u\|_{\widetilde L^2_t(\dB^{0,\f12}_\rb)}^{\frac12},
\end{align*}
which implies that
\begin{equation}\label{eq3.12}
\begin{aligned}
\int_0^t &\ff^2(\tau)\big| \bigl( \dDv_j (T^{\rm v}_{\partial_3 u} u^3) \mid \dDv_j u \bigr) \big| \, d\tau\lesssim d_j^2 b_j^{-2} 2^{-j} \| \ff \nabla_{\rm h} u\|_{\widetilde L^2_t(\dB^{0,\f12}_\rb)}^{\frac32} \| \bigl( \A'\bigr)^{\frac12} \ff u \|_{\widetilde L^2_t(\dB^{0,\f12}_\rb)}^{\frac12}.
\end{aligned}
\end{equation}

Finally, for the remainder term, we deduce from Lemma \ref{lemBern} that
\begin{align*}
\int_0^t &\ff^2(\tau) \big| \bigl( \dDv_j \dRv(u^3, \partial_3 u) \mid \dDv_j u \bigr) \big| \, d\tau \\
\lesssim & 2^{\frac{j}{2}} \sum_{k \geq j-3} \int_0^t \ff^2(\tau) \|\dot {\D}^\v_k u^3(\tau) \|_{L^2} \| \wt{\D}^\v_k \partial_3 u(\tau) \|_{L^4_{\rm h}(L^2_{\rm v})} \| \dDv_j u(\tau) \|_{L^4_{\rm h}(L^2_{\rm v})} \, d\tau.
\end{align*}
It follows from  \eqref{eq3.11} that
\begin{equation}\label{dkv}
\| \dDv_k u^3(t) \|_{L^2} \lesssim 2^{-k} \| \dDv_k \nabla_{\rm h} u^\h(t) \|_{L^2} \lesssim 2^{-\frac{3}{2}k} \| \nabla_{\rm h} u^\h(t) \|_{\dB^{0,\f12}} \lesssim 2^{-\frac{3}{2}k} \bigl( \A'(t) \bigr)^{\frac12},
\end{equation}
 from which and Lemma \ref{lemBern}, we infer
\begin{align*}
\int_0^t &\ff^2(\tau)\big| \bigl( \dDv_j \dRv(u^3, \partial_3 u) \mid \dDv_j u \bigr) \big| \, d\tau \\
\lesssim & 2^{\frac{j}{2}} \sum_{k \geq j-3} \int_0^t \ff^2(\tau) 2^{-\frac{3}{2}k} \bigl( A'(\tau) \bigr)^{\frac12} 2^k \| \wt{\D}^\v_k u(\tau) \|_{L^4_{\rm h}(L^2_{\rm v})} \| \dDv_j u(\tau) \|_{L^4_{\rm h}(L^2_{\rm v})} \, d\tau \\
\lesssim & d_j b_j^{-1} \sum_{k \geq j-3} d_k b_k^{-1} 2^{-k} \| \ff \nabla_{\rm h} u \|_{\widetilde L^2_t(\dB^{0,\f12}_\rb)} \| \bigl( \A'\bigr)^{\frac12} \ff u \|_{\widetilde L^2_t(\dB^{0,\f12}_\rb)}.
\end{align*}
We thus obtain
\begin{equation}\label{eq3.13}
\begin{aligned}
\int_0^t &\ff^2(\tau) \big| \bigl( \dDv_j \dRv(u^3, \partial_3 u) \mid \dDv_j u \bigr) \big| \, d\tau\lesssim d_j^2 b_j^{-2} 2^{-j} \|\ff \nabla_{\rm h} u \|_{\widetilde L^2_t(\dB^{0,\f12}_\rb)} \| \bigl( \A'\bigr)^{\frac12} \ff  u\|_{\widetilde L^2_t(\dB^{0,\f12}_\rb)}.
\end{aligned}
\end{equation}

By summarizing the estimates \eqref{eq3.9}, \eqref{eq3.12}, and \eqref{eq3.13}, we achieve
\begin{equation}\label{eq3.14}
\begin{aligned}
\int_0^t &\ff^2(\tau) \big| \bigl( \dDv_j (u^3 \partial_3 u) \mid \dDv_j u \bigr) \big| \, d\tau \\
\lesssim & d_j^2 b_j^{-2} 2^{-j} \Bigl( \| \ff \nabla_{\rm h} u \|_{\widetilde L^2_t(\dB^{0,\f12}_\rb)} \| \bigl( \A'\bigr)^{\frac12} \ff u\|_{\widetilde L^2_t(\dB^{0,\f12}_\rb)} + \| \ff \nabla_{\rm h} u \|_{\widetilde L^2_t(\dB^{0,\f12}_\rb)}^{\frac32} \| \bigl( \A'\bigr)^{\frac12} \ff u \|_{\widetilde L^2_t(\dB^{0,\f12}_\rb)}^{\frac12} \Bigr).
\end{aligned}
\end{equation}

By substituting the estimates \eqref{eq3.8} and \eqref{eq3.14} into \eqref{eq3.3}, and then taking supremum over $t \in [0,T]$, we obtain
\begin{align*}
&\sup_{t \in [0,T]} \bigl( \ff^2(t) \| \dDv_j u(t) \|_{L^2}^2 \bigr) + 2C_0 \int_0^T \A'(t) \ff^2(t) \| \dDv_j u(t) \|_{L^2}^2 \, dt + 2 \int_0^T \ff^2(t) \| \nabla_{\rm h} \dDv_j u(t) \|_{L^2}^2 \, dt \\
&\leq \| \dDv_j u_0 \|_{L^2}^2+ C d_j^2 b_j^{-2} 2^{-j} \Bigl( \| \ff \nabla_{\rm h} u \|_{\widetilde L^2_T(\dB^{0,\f12}_\rb)} \| \bigl( \A' \bigr)^{\frac12} \ff u \|_{\widetilde L^2_T(\dB^{0,\f12}_\rb)} \\
&\qquad\qquad\qquad\qquad\qquad\qquad\quad+ \| \ff \nabla_{\rm h} u\|_{\widetilde L^2_T(\dB^{0,\f12}_\rb)}^{\frac32} \| \bigl( \A'\bigr)^{\frac12} \ff u \|_{\widetilde L^2_T(\dB^{0,\f12}_\rb)}^{\frac12} \Bigr).
\end{align*}
By taking the square root of the above inequality, then multiplying it by $b_j 2^{\frac{j}{2}}$, and finally summing up the resulting inequalities  over $j \in \Z$, we arrive at
\begin{equation}\label{eq3.15}
\begin{aligned}
&\| \ff u \|_{\widetilde L^2_T(\dB^{0,\f12}_\rb)} + \sqrt{2C_0} \| \bigl( \A' \bigr)^{\frac12} \ff u \|_{\widetilde L^2_T(\dB^{0,\f12}_\rb)} + \sqrt{2} \|\ff \nabla_{\rm h} u \|_{\widetilde L^2_T(\dB^{0,\f12}_\rb)} \\
&\leq C  \|u_0\|_{\dB^{0,\f12}_\rb}  + C \Bigl( \| \ff \nabla_{\rm h} u \|_{\widetilde L^2_T(\dB^{0,\f12}_\rb)}^{\f12} \| \bigl( \A'\bigr)^{\frac12} \ff u \|_{\widetilde L^2_T(\dB^{0,\f12}_\rb)}^{\f12} \\
 &\qquad\qquad\qquad\qquad\quad+ \| \ff \nabla_{\rm h} u \|_{\widetilde L^2_T(\dB^{0,\f12}_\rb)}^{\frac34} \| \bigl( \A'\bigr)^{\frac12} \ff u \|_{\widetilde L^2_T(\dB^{0,\f12}_\rb)}^{\frac14} \Bigr).
\end{aligned}
\end{equation}
Yet we get, by using Young's inequality,  that
\begin{equation}\label{eq3.16}
\begin{aligned}
&C \Bigl( \| \ff \nabla_{\rm h} u \|_{\widetilde L^2_T(\dB^{0,\f12}_\rb)}^{\f12} \| \bigl( \A'\bigr)^{\frac12} \ff u \|_{\widetilde L^2_T(\dB^{0,\f12}_\rb)}^{\f12} + \| \ff\nabla_{\rm h} u \|_{\widetilde L^2_T(\dB^{0,\f12}_\rb)}^{\frac34} \| \bigl( \A'\bigr)^{\frac12} \ff u \|_{\widetilde L^2_T(\dB^{0,\f12}_\rb)}^{\frac14} \Bigr) \\
&\leq (\sqrt{2} - 1) \| \ff \nabla_{\rm h} u\|_{\widetilde L^2_T(\dB^{0,\f12}_\rb)}  + \Bigl( \frac{1+\sqrt{2}}{2} C^2 +\f{3^3(1+\sqrt{2})^3}{32} C^{4} \Bigr) \| \bigl( \A'\bigr)^{\frac12} \ff u \|_{\widetilde L^2_T(\dB^{0,\f12}_\rb)}.
\end{aligned}
\end{equation}
We remark that the constant $C$ appearing in the proof is independent of $C_0$. Therefore, by taking $C_0$ large enough so that
\[
\sqrt{2C_0} \geq  \Bigl( \frac{1+\sqrt{2}}{2} C^2 +\f{3^3(1+\sqrt{2})^3}{32} C^{4} \Bigr),
\]
we deduce \eqref{eq:prop3.1} from \eqref{eq3.15}. This completes the proof of Proposition \ref{prop3.1}.
\end{proof}

Now, we are in a position to complete the proof of Proposition \ref{prop1.5}.

\begin{proof}[Proof of Proposition \ref{prop1.5}]
For simplicity, we present only the {\it a priori} estimate. Let $u$ be a smooth enough solution of $(ANS)$
and $\ff(t)$ be defined by \eqref{eq:prop3.1}. Then in view of \eqref{eq:prop3.1}, we observe that $\ff(t)\leq \ff(\tau)$ for $0 \leq \tau \leq t \leq T$. Then, using Minkowski's inequality, we obtain
\[
\|u\|_{L^\infty_t(\dB^{0,\f12}_\rb)} + \|\nabla_\h u\|_{L^2_t(\dB^{0,\f12}_\rb)}
\leq e^{C_0 \A(t)} \Bigl( \| \ff u \|_{\widetilde L^\infty_t(\dB^{0,\f12}_\rb)} + \|\ff \nabla_\h u \|_{\widetilde L^2_t(\dB^{0,\f12}_\rb)} \Bigr).
\]
Substituting the estimate \eqref{eq:prop3.1} into the above inequality leads to \eqref{eq:prop1.5}. 
This completes the proof of Proposition \ref{prop1.5}.
\end{proof}

\smallskip

\section{The continuity estimates in the weak space}\label{Sect4}

In this section, we shall derive the estimate for the difference of any two solutions, which belong to
the homogeneous Besov space $\dB^{0,\f12},$ of $(ANS)$  via the weak norm $\|\cdot\|_X$ defined by \eqref{S1eq1a}, and present the proof of Proposition \ref{prop1.6}. 

Let $(u,p)$ and $(v,q)$ be two solutions of $(ANS)$. Then $w\eqdefa u-v$ solves the equations:
\begin{equation}\label{eqs:w}
 \left\{
\begin{array}{l}
\displaystyle \partial_t w + u \cdot \nabla w+w\cdot \nabla v - \Delta_{\mathrm{h}} w = -\nabla \frak{p}, \qquad (t,x) \in \mathbb{R}^+ \times \mathbb{R}^3, \\[4pt]
\displaystyle \operatorname{div} w = 0, \\[4pt]
\displaystyle w|_{t=0} = w_0,
\end{array}
\right.
\end{equation}
where $\frak{p}=p-q$ and $w_0=u_0-v_0$.

The term $w^3\p_3 v$ in \eqref{eqs:w}  causes the loss of one vertical derivative for $w.$ That is the reason why we have to  estimate the high-frequency part of $w$ in $H^{0,-\f12}$ by following ideas in \cite{Paicu}. For the low frequency part, as there is no derivative loss, we shall deal with the 
  estimate in $\dB^{0,\f12}.$
  

Our first estimate is concerned with the low frequency part of $w.$

\begin{prop}\label{S4prop1}
  {\sl
    Let $u,v$ be smooth enough solutions of $(ANS)$ on $[0,T]$ and $\B(t)$ be defined by \eqref{eq:prop1.6}. Then there exists some large enough constant $C>0$ such that for any {$C_3>0$} and $\fg_1(t)\eqdefa e^{-{C_3} \B(t)},$\begin{equation}\label{eq:prop5.1}
\begin{aligned}
    &\|\fg_1 w\|_{\wt L^\oo_T(\dBl^{0,\f12})}^2+\|\fg_1\nabla_\h w(t)\|_{\wt L^2_T(\dBl^{0,\f12})}^2+{C_3}\|\bigl(\B'\bigr)^\f12\fg_1 w\|_{\wt L^2_T(\dBl^{0,\f12})}^2 \\
&\leq {C} \|w_{0}\|_{\dBl^{0,\f12}}^2+C \sum_{\kappa=1}^3 \| \fg_1 \nabla_\h w \|_{\widetilde L^2_T(X)}^{\f\kappa2}\|\bigl(\B'\bigr)^\f12\fg_1 w\|_{\wt L^2_T(X)}^{2-\f{\kappa}2},
\end{aligned}
\end{equation}
where the semi-norm $\wt{L}^p_T(\dBl^{0,\f12})$  and the norm $\wt{L}^p_T(X)$ are given respectively by \eqref{S1eq2aq} and \eqref{S1eq2a}.
  }
\end{prop}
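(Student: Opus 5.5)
The estimate is a time-weighted energy estimate for low vertical frequencies, run as in Proposition \ref{prop3.1}. We apply $\dDv_j$ with $j\le -1$ to \eqref{eqs:w} and take the $L^2$ inner product with $\dDv_j w$. The pressure term vanishes because $\operatorname{div} w=0$. We then multiply by $2\fg_1^2(t)$ and integrate over $[0,t]$. Since $\fg_1'=-C_3\B'\fg_1$, this gives
\begin{equation*}
\begin{aligned}
&\fg_1^2(t)\|\dDv_j w(t)\|_{L^2}^2+2C_3\int_0^t\B'\fg_1^2\|\dDv_j w\|_{L^2}^2\,d\tau+2\int_0^t\fg_1^2\|\nabla_\h\dDv_j w\|_{L^2}^2\,d\tau\\
&\quad\le\|\dDv_j w_0\|_{L^2}^2+2\int_0^t\fg_1^2\bigl|\bigl(\dDv_j(u\cdot\nabla w+w\cdot\nabla v)\mid\dDv_j w\bigr)\bigr|\,d\tau.
\end{aligned}
\end{equation*}
Afterwards we take the supremum in time and a square root, multiply by $2^{j/2}$, and sum over $j\le -1$. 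This produces the left side of \eqref{eq:prop5.1}. The work is to show that each nonlinear contribution is bounded by
\[
d_j^2\,2^{-j}\,\| \fg_1\nabla_\h w\|_{\wt L^2_T(X)}^{\kappa/2}\,\|(\B')^{\f12}\fg_1 w\|_{\wt L^2_T(X)}^{2-\kappa/2},\qquad 1\le\kappa\le 3,
\]
where now $(d_j)$ is normalized on $j\le -1$ but the bound is allowed to involve the full $X$-norm. After the square root and the $\ell^1$ sum this is exactly the right-hand side.

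We split $u\cdot\nabla w+w\cdot\nabla v=u^\h\cdot\nabla_\h w+u^3\p_3 w+w^\h\cdot\nabla_\h v+w^3\p_3 v$ and apply vertical Bony decompositions to each piece.

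\textbf{Terms where $u$ or $v$ is the low-frequency factor.} These are the paraproducts $\dTv_{u}\,\cdot$ and $\dTv_{\nabla v}\,\cdot$, in which the coefficients $u,v$ sit in $\dSv_{k-1}$. We treat them exactly as in \eqref{eq3.4a}–\eqref{eq3.6}. The factor $\|\dSv_{k-1}u\|_{L^4_\h(L^\infty_\v)}$ is bounded by $(\B')^{1/4}$ and $\|\dSv_{k-1}\nabla_\h v\|_{L^2_\h(L^\infty_\v)}$ by $(\B')^{1/2}$. The $w$ factors are localized at $|k-j|\le4$, hence at $k\le 3$, and are bounded through \eqref{interpolation inequality}. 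For $u^3\p_3 w$ we use the commutator splitting \eqref{eq3.8a} and $\p_3u^3=-\dive_\h u^\h$, which avoids losing a vertical derivative on $w$. The term $w^3\p_3 v$ with $w^3$ at frequency $k$ is handled like \eqref{eq3.11}: we write $\dDv_k w^3\approx 2^{-k}\dDv_k\nabla_\h w^\h$, and at low frequency the factor $2^{k}$ coming from $\p_3$ acting on $v$ is harmless. When $k$ runs over $0,\dots,3$ we convert to the $\dHh^{0,-\f12}$ seminorm at the cost of a fixed constant, which is why the $X$-norm appears.

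\textbf{Terms where $w$ is the low-frequency factor, and remainders.} In paraproducts such as $\dTv_{w}\nabla v$ and $\dTv_{\p_3 v}w^3$, and in the remainders $\dRv$, the $w$ factor runs over all frequencies, including $k\ge0$. We bound $\|\dSv_{k-1}w\|_{L^4_\h(L^\infty_\v)}$ by summing $\sum_{\ell\le k-2}2^{\ell/2}\|\dDv_\ell w\|_{L^4_\h(L^2_\v)}$ over $\ell\le -1$ directly. For $\ell\ge0$ we use Cauchy–Schwarz with weights $2^{-\ell/2}$; since $k\le j+4\le 3$, only finitely many $\ell\ge0$ occur, so the result is controlled by the $X$-norm. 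For remainders we follow \eqref{eq3.7} and \eqref{eq3.13}. The output localization $\dDv_j$ gives a factor $2^{j/2}$ via Bernstein, and we bound $\|\dDv_k v\|$ or $\|\dDv_k\nabla_\h v\|$ by $2^{-k/2}(\B')^{1/2}$. It remains to sum $\sum_{k\ge j-3}$ with the $w$ factor measured in $X$.

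\textbf{Main obstacle.} The hard step is the remainder $\dRv(w^3,\p_3 v)$, together with $\dRv(u^3,\p_3 w)$, when the summation index $k$ is large. There $w$ is measured only in $H^{0,-\f12}$, so we must check that the weights close. The intended bookkeeping is as follows. We write $\p_3 w^3=-\dive_\h w^\h$, so $\|\dDv_k w^3\|\lesssim 2^{-k}\|\dDv_k\nabla_\h w\|$, and in the $\dHh^{0,-\f12}$ norm this carries a factor $2^{k/2}c_k$. The factor $\p_3$ on $v$ gives $2^{k}\cdot2^{-k/2}(\B')^{1/2}$. Multiplying everything out leaves $2^{j/2}\sum_{k\ge j-3}c_k\,(\cdots)$, and this must be compared against the target $2^{-j/2}d_j$. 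Since $j\le -1$, the factor $2^{j/2}\le 2^{-j/2}$, so there is room. If this counting fails, the first fallback is to move the $\p_3$ onto the output factor $\dDv_j w$, using $\operatorname{div}$ and integration by parts, which gains $2^{j}$ because $j\le -1$.

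Finally, we combine the time integrals with Hölder's inequality in the form $\|\fg_1\nabla_\h w\|^{\kappa/2}\|(\B')^{1/2}\fg_1 w\|^{2-\kappa/2}$. The three exponent patterns produce $\kappa=1,2,3$: one interpolated $L^4_\h$ factor on each side, a gradient on one side, and so on. This yields \eqref{eq:prop5.1}.
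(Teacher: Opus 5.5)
Your overall architecture is the paper's: the weighted localized identity \eqref{eq5.3}, the split \eqref{S4eq1a} with vertical Bony decompositions, the conversion of the finitely many blocks $0\le k\le 3$ into $\dHh^{0,-\f12}$ as in \eqref{eq5.5} and \eqref{eq5.14}, and the final sum over $j\le -1$. The commutator \eqref{eq3.8a} for $\dTv_{u^3}\p_3 w$ is harmless but unnecessary here, since $\|\dDv_k\p_3 w\|_{L^2}\lesssim\|\dDv_k w\|_{L^2}$ for $k\le 3$ and $1\le\B'$. The gap is that you never use the $\ell^1$ frequency weights of the coefficients $u,v$, and the bounds you describe do not close without them.

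In your main-obstacle computation you bound the $v$-block by $2^{\f k2}(\B')^{\f12}$ and arrive at $2^{\f j2}\sum_{k\ge j-3}c_k(\cdots)$. The $j$-weights are indeed fine; the relevant fact is $2^j\in\ell^1(j\le-1)$, cf. \eqref{eq5.9}. The $k$-sum is the problem. For $k\ge0$ the $w$-factor is controlled only through the $\ell^2$-based seminorm $\dHh^{0,-\f12}$, so $\sum_{k\ge0}c_k$ need not be finite. The same happens in all four remainders if you copy \eqref{eq3.7}: there the $\ell^1$ weight came from the unknown in $\dB^{0,\f12}_\rb$, which $w$ lacks at $k\ge 0$.

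The paraproducts in which $u$ or $v$ sits at the output frequency fail for a similar reason. These are $\dTv_{\nabla_\h w}u^\h$, $\dTv_{\p_3 w}u^3$, $\dTv_{w^\h}\nabla_\h v$ and $\dTv_{w^3}\p_3 v$. There the low-frequency $w$-factor supplies no $\ell^1$ decay in $j$, so the second $\ell^1$ factor in $d_j^2 2^{-j}$ must come from the coefficient. With a bound like $2^{-\f k2}(\B')^{\f12}$ alone you are left with $\sum_j\sqrt{d_j}$, which is uncontrolled.

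The paper keeps these weights, namely $\|\dDv_k u(t)\|_{L^4_\h(L^2_\v)}\lesssim d_k(t)2^{-\f k2}(\B'(t))^{\f14}$ and $\|\dDv_k \nabla_\h v(t)\|_{L^2}\lesssim d_k(t)2^{-\f k2}(\B'(t))^{\f12}$ (see \eqref{eq5.7}, \eqref{eq5.8a}, \eqref{eq5.20}). Then $\sum_{k\ge0}d_k(t)c_k\le1$ and \eqref{eq5.9} applies.

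Note that these $d_k(t)$ depend on time. In the remainders this is harmless: sum over $k$ pointwise in $t$ before integrating, then use $L^2_t(\dHh^{0,-\f12})=\wt L^2_t(\dHh^{0,-\f12})$. In the paraproducts, however, the step that moves $d_k(t)$ outside the time integral, as written in the paper, requires a further argument.
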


\begin{proof}
By applying $\dDv_j$ to the momentum equations of \eqref{eqs:w} and then taking the $L^2$ inner product of the resulting equation with $\dDv_j w$, we obtain
\begin{eqnarray}\label{S5eqdj}
\frac12 \frac{d}{dt} \|\dDv_j w(t)\|_{L^2}^2 + \|\nabla_\h \dDv_j w(t)\|_{L^2}^2 = - \bigl( \dDv_j (u\cdot\nabla w+w\cdot\nabla v) \mid \dDv_j w \bigr).
\end{eqnarray}
To overcome the difficulty that one cannot use a Gronwall type argument in the framework of Chemin-Lerner type spaces, we shall use once again the time-weighted Chemin-Lerner type norm as in the proof of Proposition \ref{prop3.1}. Indeed, noticing that
\begin{align*}
\int_0^t \fg_1^2(\tau) \frac{d}{d\tau} \|\dDv_j w(\tau)\|_{L^2}^2 \, d\tau =&\fg_1^2(t)\|\dDv_j w(t)\|_{L^2}^2 - \|\dDv_j w_0\|_{L^2}^2\\
&+ 2{C_3} \int_0^t \B'(\tau)\fg_1^2(\tau)\|\dDv_j w(\tau)\|_{L^2}^2 \, d\tau,
\end{align*}
we get, by first multiplying \eqref{S5eqdj} by {$2\fg_1^2(t)$} and then integrating the resulting equation over $[0,t],$ that
\begin{equation}\label{eq5.3}
  \begin{aligned}
   & \fg_1^2(t)\|\dDv_j w(t)\|_{L^2}^2 +2 \|\fg_1\nabla_\h \dDv_j w\|_{L^2_t(L^2)}^2+2{C_3} \|\bigl(\B'\bigr)^\f12 \fg_1 \dDv_j w\|_{L^2_t(L^2)}^2 \\
    &\leq \|\dDv_j w_0\|_{L^2}^2 + 2\int_0^t \fg_1^2(\tau)|\bigl( \dDv_j (u\cdot\nabla w+w\cdot\nabla v) \mid \dDv_j w \bigr)|d\tau.
  \end{aligned}
\end{equation}

Let us turn to the estimate of the nonlinear terms in \eqref{eq5.3} with $j\leq -1$. Due to the feature of anisotropic dissipation in $(ANS),$ we write
 \begin{equation}\label{S4eq1a}
  u\cdot\nabla w+w\cdot\nabla v = u^{\rm h} \cdot \nabla_\h w + u^3 \partial_3 w+ w^{\rm h} \cdot \nabla_\h v + w^3 \partial_3 v. \end{equation}
   Below we shall handle these four parts separately. Precisely
 
 \begin{lem}\label{S4lem1}
 {\sl Under the assumptions of Proposition \ref{S4prop1},  for  $j\leq -1,$ one has
 \begin{equation}\label{eq5.13}
\begin{aligned}
\int_0^t \fg_1^2(\tau) \big| \bigl( \dDv_j (u^{\rm h} \cdot \nabla_\h w) \mid \dDv_j w \bigr) \big| \, d\tau \lesssim &d_j^2  2^{-j}  \|\bigl(\B'\bigr)^\f12 \fg_1 w\|_{\wt L^2_t({ X})}^\f12 \| \fg_1 \nabla_\h w \|_{\widetilde L^2_t(X)}^\f32.
\end{aligned}
\end{equation}
}\end{lem} 
 
 \begin{lem}\label{S4lem2}
 {\sl Under the assumptions of Proposition \ref{S4prop1},   for  $j\leq -1,$ one has
 \begin{equation}\label{eq5.19}
\begin{aligned}
\int_0^t & \fg_1^2(\tau)\big| \bigl( \dDv_j (u^3 \p_3 w) \mid \dDv_j w \bigr) \big| \, d\tau \\
\lesssim& d_j^2  2^{-j} \Bigl(\|\fg_1\nabla_\h w\|_{\widetilde L^2_t(X)}\| \bigl(\B'\bigr)^\f12 \fg_1 w\|_{\wt L^2_t(X)}+\|\fg_1 \nabla_\h w \|_{\widetilde L^2_t({X})} ^\f12  \|\bigl(\B'\bigr)^\f12\fg_1  w\|_{\wt L^2_t(X)}^\f32\Bigr).
\end{aligned}
\end{equation}}\end{lem} 

 \begin{lem}\label{S4lem3}
 {\sl Under the assumptions of Proposition \ref{S4prop1}, for  $j\leq -1,$ one has
 \begin{equation}\label{eq5.25}
\begin{aligned}
\int_0^t&\fg_1^2(\tau) \big| \bigl( \dDv_j (w^{\rm h} \cdot \nabla_\h v) \mid \dDv_j w \bigr) \big| \, d\tau
\lesssim d_j^2  2^{-j}\| \fg_1 \nabla_\h w \|_{\widetilde L^2_t(X)} \|\bigl(\B'\bigr)^\f12 \fg_1 w\|_{\wt L^2_t(X)}.
\end{aligned}
\end{equation} }\end{lem} 

 \begin{lem}\label{S4lem4}
 {\sl Under the assumptions of Proposition \ref{S4prop1}, for  $j\leq -1,$ one has
 \begin{equation}\label{eq5.30}
\begin{aligned}
\int_0^t &\fg_1^2(\tau) \big| \bigl( \dDv_j (w^3\p_3 v) \mid \dDv_j w \bigr) \big| \, d\tau \\
\lesssim& d_j^2  2^{-j} \Bigl(\| \fg_1 \nabla_\h w \|_{\widetilde L^2_t(X)} \|\bigl(\B'\bigr)^\f12\fg_1  w\|_{\wt L^2_t(X)}
+ \|\bigl(\B'\bigr)^\f12 \fg_1 w\|_{\wt L^2_t({X})}^\f12 
\| \fg_1 \nabla_\h w \|_{\widetilde L^2_t(X)}^\f32\Bigr).
\end{aligned}
\end{equation}
}\end{lem} 

We admit the above lemmas for the time being, and continue our proof of Proposition \ref{S4prop1}.\\

In view of \eqref{S4eq1a}, by substituting the estimates (\ref{eq5.13}-\ref{eq5.30}) into \eqref{eq5.3}, and then taking supremum over $t\in [0,T]$, we achieve
  \begin{align*}
    \|&\fg_1\dDv_j w\|_{L^\oo_T(L^2)}^2 +2 \|\fg_1\nabla_\h \dDv_j w\|_{L^2_T(L^2)}^2+2{C_3}\|\bigl(\B'\bigr)^\f12\fg_1 \dDv_jw\|_{L^2_T(L^2)}^2\\
    & \leq \|\dDv_j w_0\|_{L^2}^2 +C d_j^2 2^{-j} \sum_{\kappa=1}^3 \| \fg_1 \nabla_\h w\|_{\widetilde L^2_T(X)}^{\f\kappa2}\|\bigl(\B'\bigr)^\f12\fg_1 w\|_{\wt L^2_T(X)}^{2-\f{\kappa}2}.
  \end{align*}
Thanks to \eqref{2.4} and \eqref{S1eq2aq}, by taking the square root of the above inequality, then multiplying it by $ 2^{\frac{j}{2}}$, and finally summing up the resulting inequalities over $j\leq -1$, we arrive at
  \begin{align*}
    \|\fg_1& w\|_{\wt L^\oo_T(\dBl^{0,\f12})}+\|\fg_1\nabla_\h w\|_{\wt L^2_T(\dBl^{0,\f12})}+{\sqrt{C_3}}\|\bigl(\B'\bigr)^\f12\fg_1 w\|_{\wt L^2_T(\dBl^{0,\f12})} \\
\leq& {C} \|w_{0}\|_{\dBl^{0,\f12}}+C \sum_{\kappa=1}^3 \| \fg_1\nabla_\h 
w \|_{\widetilde L^2_T(X)}^{\f\kappa4}  \|\bigl(\B'\bigr)^\f12\fg_1  w\|_{\wt L^2_T(X)}^{1-\f{\kappa}4},
  \end{align*}
which leads to  \eqref{eq:prop5.1}. This finishes the proof of Proposition \ref{S4prop1}.\end{proof}

We now present the proof of Lemmas \ref{S4lem1}-\ref{S4lem4}.
 
\begin{proof}[Proof of Lemma \ref{S4lem1}] By applying  Bony's decomposition \eqref{homo bony v} to $u^{\rm h} \cdot \nabla_\h w$ in the vertical variable, we write
\begin{equation}\label{S4eq2a}
u^{\rm h} \cdot \nabla_\h w = \dTv_{u^{\rm h}} \nabla_\h w + \dTv_{\nabla_\h w} u^{\rm h} + \dRv(u^{\rm h}, \nabla_\h w).
\end{equation}

Considering the support properties to the Fourier transform of the terms in $\dTv_{u^{\rm h}} \nabla_\h w$, we find
\[
\big| \bigl( \dDv_j (\dTv_{u^{\rm h}} \nabla_\h w) \mid \dDv_j w \bigr) \big|
\lesssim \sum_{|k-j|\leq 4} \| \dSv_{k-1} u^{\rm h} \|_{L^4_{\rm h}(L^\infty_{\rm v})} \| \dDv_k \nabla_\h w \|_{L^2} \| \dDv_j w \|_{L^4_{\rm h}(L^2_{\rm v})}.
\]
Yet it follows from Lemma \ref{lemBern}, the 2-D interpolation inequality \eqref{interpolation inequality}
 and the definition of $\B(t)$  (see \eqref{eq:prop1.6}) that
\begin{equation}\label{eq5.4}
\begin{aligned}
\| \dSv_{k-1} u(t) \|_{L^4_{\rm h}(L^\infty_{\rm v})}
&\lesssim \sum_{\ell=-\oo}^{k-2} 2^{\frac{\ell}{2}} \| \dDv_\ell u(t) \|_{L^4_{\rm h}(L^2_{\rm v})} \\
&\lesssim \sum_{\ell=-\oo}^{k-2} 2^{\frac{\ell}{2}} \| \dDv_\ell u(t) \|_{L^2}^{\frac12} \| \nabla_\h \dDv_\ell u(t) \|_{L^2}^{\frac12} \\
&\lesssim \| u(t) \|_{\dB^{0,\f12}}^{\frac12} \| \nabla_\h u(t) \|_{\dB^{0,\f12}}^{\frac12} \lesssim \bigl( \B'(t) \bigr)^{\frac14},
\end{aligned}
\end{equation}
from which,  \eqref{S1eq2aq} and $j\leq -1$,  we infer 
\begin{align*}
\int_0^t &\fg_1^2(\tau)\big| \bigl( \dDv_j (\dTv_{u^{\rm h}} \nabla_\h w) \mid \dDv_j w \bigr) \big| \, d\tau \\
\lesssim &\sum_{|k-j|\leq 4} \int_0^t  \bigl(\B'(\tau) \bigr)^{\frac14}\fg_1^2(\tau) \| \dDv_k \nabla_\h w(\tau) \|_{L^2} \| \dDv_j w(\tau) \|_{L^2}^{\frac12} \| \nabla_\h \dDv_j w(\tau) \|_{L^2}^{\frac12} \, d\tau \\
\lesssim &\sum_{|k-j|\leq 4} \|\fg_1 \dDv_k \nabla_\h w\|_{L^2_t(L^2)} \| \bigl( \B' \bigr)^{\frac12}
\fg_1\dDv_j w \|_{L^2_t(L^2)}^{\frac12}  \| \fg_1 \dDv_j \nabla_\h w\|_{L^2_t(L^2)}^{\frac12} \\
\lesssim &\Bigl( \sum_{|k-j|\leq 4} d_k 2^{-\frac{k}{2}} \Bigr) d_j  2^{-\frac{j}{2}} \|\fg_1{\dSv_4} \nabla_\h w\|_{\widetilde L^2_t(\dB^{0,\f12})}  \| \bigl(\B'\bigr)^\f12 \fg_1 w\|_{\wt L^2_t(\dBl^{0,\f12})}^\f12\| \fg_1  \nabla_\h w \|_{\widetilde L^2_t(\dBl^{0,\f12})}^\f12.
\end{align*}
Notice from \eqref{S1eq2aq} and \eqref{S1eq2a} that
\begin{equation}\label{eq5.5}
  \begin{aligned}
    \|\fg_1 {\dSv_4} \nabla_\h w \|_{\widetilde L^2_t(\dB^{0,\f12})} 
    &\leq \|\fg_1\nabla_\h w\|_{\widetilde L^2_t(\dBl^{0,\f12})} +C\sum_{j=0}^{3} 2^{\f{j}2} \|\fg_1\dDv_j \nabla_\h w \|_{ L^2_t(L^2)}\\
    &\leq \|\fg_1\nabla_\h w\|_{\widetilde L^2_t(\dBl^{0,\f12})} +C \|\fg_1\nabla_\h w \|_{\widetilde L^2_t(\dHh^{0,-\f12})}\leq C\| \fg_1\nabla_\h w \|_{\widetilde L^2_t(X)},
  \end{aligned}
\end{equation}
we deduce that
\begin{equation}\label{eq5.6}
\begin{aligned}
\int_0^t \fg_1^2(\tau) \big| \bigl( \dDv_j (\dTv_{u^{\rm h}} \nabla_\h w) \mid \dDv_j w \bigr) \big| \, d\tau \lesssim  &d_j^2  2^{-j}  \| \bigl(\B'\bigr)^\f12 \fg_1 w\|_{\wt L^2_t({X})}^\f12\| \fg_1\nabla_\h w \|_{\widetilde L^2_t(X)} ^\f32.
\end{aligned}
\end{equation}

Along the same line, it follows from Lemma \ref{lemBern} that
\begin{equation}
\begin{split}
\| \dSv_{k-1} \nabla_\h w(t) \|_{L^2_{\rm h}(L^\infty_{\rm v})}
&\lesssim \sum_{\ell\in \Z} 2^{\frac{\ell}{2}} \| \dDv_\ell \dSv_{k-1} \nabla_\h w(t) \|_{L^2} \lesssim \| \dSv_{k-1} \nabla_\h w(t) \|_{\dB^{0,\f12}} ,
\end{split}
\end{equation}
and 
\begin{equation}\label{eq5.7}
  \| \dDv_k u(t) \|_{L^4_{\rm h}(L^2_\vv)}  \lesssim d_k(t) 2^{-\f{k}2} \|u(t) \|_{\dB^{0,\f12}}^\f12\|\nabla_\h u(t) \|_{\dB^{0,\f12}}^\f12 \lesssim d_k(t) 2^{-\f{k}2}\bigl(\B'(t)\bigr)^\f14,
\end{equation}
so that we deduce from \eqref{interpolation inequality} and $j\leq -1$ that
\begin{align*}
\int_0^t &\fg_1^2(\tau) \big| \bigl( \dDv_j (\dTv_{\nabla_\h w} u^{\rm h}) \mid \dDv_j w \bigr) \big| \, d\tau \\
\lesssim &\sum_{|k-j|\leq 4} \int_0^t \fg_1^2(\tau) \| \dSv_{k-1} \nabla_\h w(\tau) \|_{L^2_{\rm h}(L^\infty_{\rm v})} \| \dDv_k u^{\rm h}(\tau) \|_{L^4_{\rm h}(L^2_{\rm v})} \| \dDv_j w(\tau) \|_{L^4_{\rm h}(L^2_{\rm v})} \, d\tau \\
\lesssim &\sum_{|k-j|\leq 4}  2^{-\f{k}2}\int_0^t d_k(\tau)\bigl(\B'(\tau)\bigr)^\f14 \fg_1^2(\tau)\| \dSv_4 \nabla_\h w(\tau) \|_{\dB^{0,\f12}} \| \dDv_j w(\tau) \|_{L^2}^\f12 \|\nabla_\h \dDv_j w(\tau) \|_{L^2}^\f12 \, d\tau\\
\lesssim &\Bigl( \sum_{|k-j|\leq 4} d_k 2^{-\frac{k}{2}} \Bigr) d_j  2^{-\frac{j}{2}} \|\fg_1{\dSv_4} \nabla_\h w \|_{\widetilde L^2_t(\dB^{0,\f12})}  \| \bigl(\B'\bigr)^\f12 \fg_1 w\|_{\wt L^2_t(\dBl^{0,\f12})}^\f12\| \fg_1 \nabla_\h w \|_{\widetilde L^2_t(\dBl^{0,\f12})}^\f12,
\end{align*}
which together with \eqref{eq5.5} implies
\begin{equation} \label{eq5.8}
\begin{aligned}
\int_0^t \fg^2_1(\tau)\big| \bigl( \dDv_j (\dTv_{\nabla_\h w} u^{\rm h}) \mid \dDv_j w \bigr) \big| \, d\tau \lesssim &d_j^2  2^{-j} \|\bigl(\B'\bigr)^\f12 \fg_1  w\|_{\wt L^2_t({X})}^\f12
\|\fg_1\nabla_\h w\|_{\widetilde L^2_t(X)}^\f32.
\end{aligned}
\end{equation}

Finally, for the remainder term with $j \leq -1$, we deduce from Lemma \ref{lemBern} that
\begin{align*}
\int_0^t &\fg_1^2(\tau) \big| \bigl( \dDv_j \dRv(u^{\rm h}, \nabla_\h w) \mid \dDv_j w \bigr) \big| \, d\tau \\
\lesssim & 2^{\frac{j}{2}} \sum_{k \geq j-3} \int_0^t \fg_1^2(\tau)\| \wt{\D}^\v_k u^{\rm h}(\tau) \|_{L^4_{\rm h}(L^2_{\rm v})} \| \dDv_k \nabla_\h w(\tau) \|_{L^2} \| \dDv_j w(\tau) \|_{L^4_{\rm h}(L^2_{\rm v})} \, d\tau.
\end{align*}
Similar to  \eqref{eq5.7}, one has
\begin{equation}\label{eq5.8a}
\| \wt{\D}^\v_k u(t) \|_{L^4_{\rm h}(L^2_{\rm v})} 
\lesssim d_k(t) 2^{-\frac{k}{2}} \bigl(\B'(t) \bigr)^{\frac14},
\end{equation}
from which and \eqref{interpolation inequality}, we infer 
\begin{align*}
\int_0^t &\fg_1^2(\tau) \big| \bigl( \dDv_j \dRv(u^{\rm h}, \nabla_\h w) \mid \dDv_j w \bigr) \big| \, d\tau \\
\lesssim & 2^{\frac{j}{2}} \sum_{k \geq j-3} \int_0^t  d_k(\tau) 2^{-\frac{k}{2}} \bigl( \B'(\tau) \bigr)^{\frac14}\fg_1^2(\tau) \|\dDv_k \nabla_\h w(\tau)\|_{L^2} \| \dDv_j w(\tau) \|_{L^2}^\f12\| \nabla_\h \dDv_j w(\tau) \|_{L^2}^\f12 \, d\tau\\
\lesssim & d_j \Bigl( \| \fg_1\nabla_\h w \|_{\widetilde L^2_t(\dBl^{0,\f12})}\sum_{k = j-3}^{-1}  d_k 2^{-k} +  \|\fg_1\nabla_\h w \|_{\widetilde L^2_t(\dHh^{0,-\f12})}\sum_{k = 0}^{\oo}  d_k  \Bigr) \\
&\qquad\qquad\qquad\qquad\qquad\qquad\times  \|\bigl(\B'\bigr)^\f12 \fg_1 w \|_{\wt L^2_t(\dBl^{0,\f12})}^\f12\|\fg_1\nabla_\h w \|_{\wt L^2_t(\dBl^{0,\f12})}^\f12 .
\end{align*}
Notice that $2^{j}\in \ell^1(-\N)$, we have 
\begin{equation}\label{eq5.9}
  \sum_{k = j-3}^{-1}  d_k 2^{-k} +\sum_{k = 0}^{\oo}  d_k \lesssim  2^{-j} d_j+1 \lesssim 2^{-j} d_j, \qquad \text{for}\quad j\leq -1.
\end{equation} 
Therefore, we conclude that 
\begin{equation} \label{eq5.11}
\begin{aligned}
\int_0^t \fg_1^2(\tau) \big| \bigl( \dDv_j \dRv(u^{\rm h}, &\nabla_\h w) \mid \dDv_j w \bigr) \big| \, d\tau \lesssim d_j^2  2^{-j}  \|\bigl(\B'\bigr)^\f12 \fg_1w\|_{\wt L^2_t({X})}^\f12
 \|\fg_1 \nabla_\h w \|_{\widetilde L^2_t(X)}^\f32.
\end{aligned}
\end{equation}

By summarizing the estimates \eqref{eq5.6}, \eqref{eq5.8}, and \eqref{eq5.11}, we obtain \eqref{eq5.13}. This completes the proof of Lemma \ref{S4lem1}. \end{proof}

\begin{proof}[Proof of Lemma \ref{S4lem2}]
By applying Bony's decomposition \eqref{homo bony v} once again to $u^3\p_3 w$ in the vertical variable, we write
\begin{equation} \label{S4eq3a}
u^3\p_3 w = \dTv_{u^3} \p_3 w + \dTv_{\p_3 w} u^3 + \dRv(u^3, \p_3 w).
\end{equation}

 Since we focus on the low frequency part of $w$, we do not need to use the commutator argument as the proof of \eqref{eq3.9}. Precisely, we get, by using the support properties of the Fourier transform of the terms in $\dTv_{u^3} \p_3 w$ and \eqref{eq5.4},  that
\begin{align*}
\int_0^t &\fg_1^2(\tau) \big| \bigl( \dDv_j (\dTv_{u^3} \p_3 w) \mid \dDv_j w \bigr) \big| \, d\tau \\
\lesssim &\sum_{|k-j|\leq 4} \int_0^t  \bigl( \B'(\tau)\bigr)^{\frac14} \fg_1^2(\tau)\| \dDv_k \p_3 w(\tau) \|_{L^2} \| \dDv_j w(\tau) \|_{L^2}^{\frac12} \| \nabla_\h \dDv_j w(\tau) \|_{L^2}^{\frac12} \, d\tau \\
\lesssim &\sum_{|k-j|\leq 4} \int_0^t  \bigl( \B'(\tau) \bigr)^{\frac34}\fg_1^2(\tau) \| \dDv_k  w(\tau) \|_{L^2} \| \dDv_j w(\tau) \|_{L^2}^{\frac12} \| \nabla_\h \dDv_j w(\tau) \|_{L^2}^{\frac12} \, d\tau \\
\lesssim &\Bigl( \sum_{|k-j|\leq 4} d_k 2^{-\frac{k}{2}} \Bigr) d_j  2^{-\frac{j}{2}}  \|\bigl(\B'\bigr)^{\f12}\fg_1 {\dSv_4} w\|_{\widetilde L^2_t(\dB^{0,\f12})}   \|\bigl(\B'\bigr)^\f12 \fg_1  w\|_{\wt L^2_t(\dBl^{0,\f12})}^\f12\| \fg_1\nabla_\h w \|_{\widetilde L^2_t(\dBl^{0,\f12})}^\f12,
\end{align*}
where we used Lemma \ref{lemBern} so that $\| \dDv_k \p_3 w(\tau) \|_{L^2} \lesssim \| \dDv_k  w(\tau) \|_{L^2} $ for $k\leq 3$ and $1\lesssim B'(t)$. 

While it follows from a similar derivation of \eqref{eq5.5} that
\begin{equation}\label{eq5.14}
  \begin{aligned}
    \|\bigl(\B'\bigr)^\f12  \fg_1{\dSv_4 } w \|_{\widetilde L^2_t(\dB^{0,\f12})}
    &\leq \| \bigl(\B'\bigr)^\f12 \fg_1 w\|_{\widetilde L^2_t(\dBl^{0,\f12})} +C\sum_{j=0}^{3} 2^{\f{j}2} \| \bigl(\B'\bigr)^\f12 \fg_1\dDv_j  w\|_{ L^2_t(L^2)}\\
    &\leq \| {\bigl(\B'\bigr)^\f12}\fg_1 w \|_{\widetilde L^2_t(\dBl^{0,\f12})} +C \| \bigl(\B'\bigr)^\f12 
    \fg_1 w\|_{\widetilde L^2_t(\dHh^{0,-\f12})},
  \end{aligned}
\end{equation}
we thus deduce that
\begin{equation}\label{eq5.15}
\begin{aligned}
\int_0^t \fg_1^2(\tau) \big| \bigl( \dDv_j (\dTv_{u^3} \p_3 w)& \mid \dDv_j w \bigr) \big| \, d\tau \lesssim d_j^2  2^{-j} \| \fg_1 \nabla_\h w \|_{\widetilde L^2_t({\color{red}X})} ^\f12 \|\bigl(\B'\bigr)^\f12 \fg_1 w\|_{\wt L^2_t(X)}^\f32.
\end{aligned}
\end{equation}

Along the same line to  the proof of \eqref{eq5.8}, we get, by using $\| \dSv_{k-1} \p_3 w(\tau) \|_{L^2_{\rm h}(L^\infty_{\rm v})}\lesssim \| \dSv_4 w(\tau) \|_{L^2_{\rm h}(L^\infty_{\rm v})}$ for $k\leq 3$, $1\lesssim B'(t)$ and \eqref{eq5.7}, that
\begin{align*}
\int_0^t &\fg_1^2(\tau) \big| \bigl( \dDv_j (\dTv_{\p_3 w} u^3) \mid \dDv_j w \bigr) \big| \, d\tau \\
\lesssim &\sum_{|k-j|\leq 4} \int_0^t \fg_1^2(\tau) \| \dSv_{k-1} \p_3 w(\tau) \|_{L^2_{\rm h}(L^\infty_{\rm v})} \| \dDv_k u^3(\tau) \|_{L^4_{\rm h}(L^2_{\rm v})} \| \dDv_j w(\tau) \|_{L^4_{\rm h}(L^2_{\rm v})} \, d\tau \\
\lesssim &\sum_{|k-j|\leq 4}  2^{-\f{k}2}\int_0^td_k(\tau) \bigl(\B'\bigr)^\f34\fg_1^2(\tau)\| \dSv_4 w(\tau) \|_{\dB^{0,\f12}} \| \dDv_j w(\tau) \|_{L^2}^\f12 \|\nabla_\h \dDv_j w(\tau) \|_{L^2}^\f12 \, d\tau\\
\lesssim &\Bigl( \sum_{|k-j|\leq 4} d_k 2^{-\frac{k}{2}} \Bigr) d_j  2^{-\frac{j}{2}}  \| \bigl(\B'\bigr)^\f12 \fg_1 {\dSv_4 } w \|_{\widetilde L^2_t(\dB^{0,\f12})}  \|\bigl(\B'\bigr)^\f12 \fg_1 w\|_{\wt L^2_t(\dBl^{0,\f12})}^\f12\| \fg_1\nabla_\h w \|_{\widetilde L^2_t(\dBl^{0,\f12})}^\f12,
\end{align*}
which together with \eqref{eq5.14} implies
\begin{equation} \label{eq5.17}
\begin{aligned}
\int_0^t  \fg_1^2(\tau)\big| \bigl( \dDv_j (\dTv_{\p_3 w} u^3) &\mid \dDv_j w \bigr) \big| \, d\tau \lesssim d_j^2  2^{-j} \| \fg_1\nabla_\h w \|_{\widetilde L^2_t({X})} ^\f12
\|\bigl(\B'\bigr)^\f12 \fg_1 w\|_{\wt L^2_t(X)}^\f32.
\end{aligned}
\end{equation}

Finally, for the remainder term with $j \leq -1$, we deduce from Lemma \ref{lemBern} that
\begin{align*}
\int_0^t &\fg_1^2(\tau) \big| \bigl( \dDv_j \dRv(u^3, \p_3 w) \mid \dDv_j w \bigr) \big| \, d\tau \\
\lesssim & 2^{\frac{j}{2}} \sum_{k \geq j-3} \int_0^t \fg_1^2(\tau)\| \wt{\D}^\v_k u^3(\tau) \|_{L^2} \| \dDv_k \p_3 w(\tau) \|_{L^4_\h(L^2_\v)} \| \dDv_j w(\tau) \|_{L^4_{\rm h}(L^2_{\rm v})} \, d\tau\\ 
\lesssim & 2^{\frac{j}{2}} \sum_{k \geq j-3} \int_0^t \fg_1^2(\tau) \| \wt{\D}^\v_k \p_3 u^3(\tau) \|_{L^2} \| \dDv_k  w(\tau) \|_{L^4_\h(L^2_\v)} \| \dDv_j w(\tau) \|_{L^4_{\rm h}(L^2_{\rm v})} \, d\tau.
\end{align*}
Observing that
\begin{align*}
\| \wt{\D}^\v_k \p_3u^3 (t) \|_{L^2} \lesssim d_k(t) 2^{-\frac{k}{2}} \| \dive_\h u^\h (t) \|_{\dB^{0,\f12}} \lesssim d_k(t) 2^{-\frac{k}{2}} \bigl( \B'(t) \bigr)^{\frac12},
\end{align*}
from which and \eqref{interpolation inequality}, we infer 
\begin{align*}
\int_0^t &\fg_1^2(\tau)\big| \bigl( \dDv_j \dRv(u^3, \p_3 w) \mid \dDv_j w \bigr) \big| \, d\tau \\
\lesssim & 2^{\frac{j}{2}} \sum_{k \geq j-3} \int_0^t  d_k(\tau) 2^{-\frac{k}{2}} \bigl( \B'(\tau) \bigr)^{\frac12} \fg_1^2(\tau) \|\dDv_k  w(\tau)\|_{L^2}^\f12 \|\dDv_k \nabla_\h w(\tau)\|_{L^2}^\f12 \\
&\qquad\qquad\times\| \dDv_j w(\tau) \|_{L^2}^\f12\| \nabla_\h \dDv_j w(\tau) \|_{L^2}^\f12 \, d\tau\\
\lesssim & d_j \Bigl( \|\fg_1 \nabla_\h w \|_{\widetilde L^2_t(\dBl^{0,\f12})}^\f12 \|\bigl(\B'\bigr)^\f12 \fg_1 w\|_{\wt L^2_t(\dBl^{0,\f12})}^\f12\sum_{k = j-3}^{-1}  d_k 2^{-k} \\
&\qquad+  \|\fg_1 \nabla_\h w\|_{\widetilde L^2_t(\dHh^{0,-\f12})}^\f12 \|\bigl(\B'\bigr)^\f12
\fg_1 w\|_{\wt L^2_t(\dHh^{0,-\f12})}^\f12 \sum_{k = 0}^{\oo}  d_k  \Bigr) \\
&\quad\times   \|\bigl(\B'\bigr)^\f12 \fg_1 w\|_{\wt L^2_t(\dBl^{0,\f12})}^\f12\|\fg_1\nabla_\h w \|_{\wt L^2_t(\dBl^{0,\f12})}^\f12 .
\end{align*}
Together with \eqref{eq5.9}, we conclude that 
\begin{equation} \label{eq5.18}
\begin{aligned}
\int_0^t & \fg_1^2(\tau)\big| \bigl( \dDv_j \dRv(u^3, \p_3 w) \mid \dDv_j w \bigr) \big| \, d\tau \lesssim d_j^2  2^{-j}  \| \fg_1 \nabla_\h w \|_{\widetilde L^2_t(X)} 
\|\bigl(\B'\bigr)^\f12 \fg_1 w\|_{\wt L^2_t(X)}.
\end{aligned}
\end{equation}

By summarizing the estimates \eqref{eq5.15}, \eqref{eq5.17}, and \eqref{eq5.18}, we conclude the proof of \eqref{eq5.19}. \end{proof}

\begin{proof}[Proof of Lemma \ref{S4lem3}]
By applying the  Bony's decomposition \eqref{homo bony v} to $w^\h \cdot \nabla_\h v$ in the vertical variable, we write
\begin{equation} \label{S4eq5a}
w^\h \cdot \nabla_\h v = \dTv_{w^{\rm h}} \nabla_\h v + \dTv_{\nabla_\h v} w^{\rm h} + \dRv(w^{\rm h}, \nabla_\h v).
\end{equation}

Considering the support properties of the Fourier transform of the terms in $\dTv_{w^{\rm h}} \nabla_\h v$, we find
\[
\big| \bigl( \dDv_j (\dTv_{w^{\rm h}} \nabla_\h v) \mid \dDv_j w \bigr) \big|
\lesssim \sum_{|k-j|\leq 4} \| \dSv_{k-1} w^{\rm h} \|_{L^4_{\rm h}(L^\infty_{\rm v})} \| \dDv_k \nabla_\h v \|_{L^2} \| \dDv_j w \|_{L^4_{\rm h}(L^2_{\rm v})}.
\]
Yet it follows from the definition of $\B(t)$ (see \eqref{eq:prop1.6}) that 
\begin{equation}\label{eq5.20}
   \| \dDv_k \nabla_\h v(t) \|_{L^2} \lesssim d_k(t) 2^{-\f{k}2}\|\nabla_\h v(t)\|_{\dB^{0,\f12}}
   \lesssim d_k (t)2^{-\f{k}2} \bigl(\B'(t)\bigr)^\f12
\end{equation}
from which, Lemma \ref{lemBern} and \eqref{interpolation inequality}, we deduce that for $j\leq -1$
\begin{align*}
\int_0^t &\fg_1^2(\tau) \big| \bigl( \dDv_j (\dTv_{w^{\rm h}} \nabla_\h v) \mid \dDv_j w \bigr) \big| \, d\tau \\
\lesssim &\sum_{|k-j|\leq 4} d_k 2^{-\f{k}2} \int_0^t  \bigl( \B' \bigr)^{\frac12} \fg_1^2(\tau) \| {\dSv_4} \nabla_\h w(\tau) \|_{L^2_\h(L^\oo_\v)}^\f12 \| {\dSv_4} w(\tau) \|_{L^2_\h(L^\oo_\v)}^{\frac12} \\
&\qquad\qquad\qquad\quad\times \| \nabla_\h \dDv_j w(\tau) \|_{L^2}^{\frac12}\|  \dDv_j w(\tau) \|_{L^2}^{\frac12} \, d\tau \\
\lesssim &\Bigl( \sum_{|k-j|\leq 4} d_k 2^{-\frac{k}{2}} \Bigr) d_j  2^{-\frac{j}{2}} \|\fg_1{\dSv_4} \nabla_\h w\|_{\widetilde L^2_t(\dB^{0,\f12})}^\f12  \|\bigl(\B'\bigr)^\f12\fg_1 {\dSv_4} w\|_{\wt L^2_t(\dB^{0,\f12})}^\f12\\
&\qquad\qquad\qquad\quad \times  \|\bigl(\B'\bigr)^\f12\fg_1  w\|_{\wt L^2_t(\dBl^{0,\f12})}^\f12 \| \fg_1\nabla_\h w \|_{\widetilde L^2_t(\dBl^{0,\f12})}^\f12,
\end{align*}
which together with \eqref{eq5.5} and \eqref{eq5.14} ensures  that
\begin{equation}\label{eq5.21}
\begin{aligned}
\int_0^t &\fg_1^2(\tau)\big| \bigl( \dDv_j (\dTv_{w^{\rm h}} \nabla_\h v) \mid \dDv_j w \bigr) \big| \, d\tau 
\lesssim d_j^2  2^{-j}\|\fg_1 \nabla_\h w \|_{\widetilde L^2_t(X)}  \|\bigl(\B'\bigr)^\f12 \fg_1 w\|_{\wt L^2_t(X)}.
\end{aligned}
\end{equation}

On the other hand, it follows from Lemma \ref{lemBern} and the definition of $\B(t)$ (see \eqref{eq:prop1.6})  that
\begin{equation}\label{eq5.22}
\begin{split}
\| \dSv_{k-1} \nabla_\h v(t) \|_{L^2_{\rm h}(L^\infty_{\rm v})}
&\lesssim \sum_{\ell\leq k-2} 2^{\frac{\ell}{2}} \| \dDv_\ell  \nabla_\h v(t) \|_{L^2} \lesssim \|  \nabla_\h v(t) \|_{\dB^{0,\f12}} \lesssim \bigl(\B'(t)\bigr)^\f12 ,
\end{split}
\end{equation}
so that we deduce from \eqref{interpolation inequality} that
\begin{align*}
\int_0^t &\fg_1(\tau) \big| \bigl( \dDv_j (\dTv_{\nabla_\h v} w^{\rm h}) \mid \dDv_j w \bigr) \big| \, d\tau \\
\lesssim &\sum_{|k-j|\leq 4} \int_0^t \fg_1(\tau) \| \dSv_{k-1} \nabla_\h v(\tau) \|_{L^2_{\rm h}(L^\infty_{\rm v})} \| \dDv_k w^{\rm h}(\tau) \|_{L^4_{\rm h}(L^2_{\rm v})} \| \dDv_j w(\tau) \|_{L^4_{\rm h}(L^2_{\rm v})} \, d\tau \\
\lesssim &\sum_{|k-j|\leq 4} \int_0^t  (\B'(\tau))^\f12 \fg_1^2(\tau)\| \dDv_k w(\tau) \|_{L^2}^\f12 \|\nabla_\h \dDv_k w(\tau) \|_{L^2}^\f12 \| \dDv_j w(\tau) \|_{L^2}^\f12 \|\nabla_\h \dDv_j w(\tau) \|_{L^2}^\f12 \, d\tau\\
\lesssim &\Bigl( \sum_{|k-j|\leq 4} d_k 2^{-\frac{k}{2}} \Bigr) d_j  2^{-\frac{j}{2}}\|\bigl(\B'\bigr)^\f12
\fg_1 {\dSv_4} w\|_{\wt L^2_t(\dB^{0,\f12})}^\f12 \|\fg_1 {\dSv_4} \nabla_\h w\|_{\widetilde L^2_t(\dB^{0,\f12})}^\f12 \\
&\qquad \qquad\qquad\qquad\qquad\quad\times  \|\bigl(\B'\bigr)^\f12 \fg_1  w\|_{\wt L^2_t(\dBl^{0,\f12})}^\f12 \|\fg_1\nabla_\h w \|_{\widetilde L^2_t(\dBl^{0,\f12})}^\f12,
\end{align*}
which along with \eqref{eq5.5} and \eqref{eq5.14} ensures  that
\begin{equation}\label{eq5.23}
\begin{aligned}
\int_0^t &\fg_1^2(\tau) \big| \bigl( \dDv_j (\dTv_{\nabla_\h v} w^\h) \mid \dDv_j w \bigr) \big| \, d\tau \lesssim d_j^2  2^{-j}\|\fg_1\nabla_\h w\|_{\widetilde L^2_t(X)} \|\bigl(\B'\bigr)^\f12\fg_1 w\|_{\wt L^2_t(X)}.
\end{aligned}
\end{equation}

Finally, for the remainder term with $j \leq -1$, we deduce from Lemma \ref{lemBern} that
\begin{align*}
\int_0^t & \fg_1^2(\tau)\big| \bigl( \dDv_j \dRv(w^{\rm h}, \nabla_\h v) \mid \dDv_j w \bigr) \big| \, d\tau \\
\lesssim & 2^{\frac{j}{2}} \sum_{k \geq j-3} \int_0^t \fg_1^2(\tau)\| \dDv_k w^{\rm h}(\tau) \|_{L^4_{\rm h}(L^2_{\rm v})} \| \wt{\D}^\v_k \nabla_\h v(\tau) \|_{L^2} \| {\dDv_j } w(\tau) \|_{L^4_{\rm h}(L^2_{\rm v})} \, d\tau.
\end{align*}
Yet similar to  \eqref{eq5.20}, one has
$$
\| \wt{\D}^\v_k \nabla_\h v(t) \|_{L^2}  \lesssim d_k(t) 2^{-\frac{k}{2}} \bigl( \B'(t) \bigr)^{\frac12},
$$
from which and \eqref{interpolation inequality}, we infer 
\begin{align*}
\int_0^t & \fg_1^2(\tau)\big| \bigl( \dDv_j \dRv(w^{\rm h}, \nabla_\h v) \mid \dDv_j w \bigr) \big| \, d\tau \\
\lesssim & 2^{\frac{j}{2}} \sum_{k \geq j-3} \int_0^t  d_k(\tau) 2^{-\frac{k}{2}} \bigl( \B'(\tau) \bigr)^{\frac12}\fg_1^2(\tau)\|\dDv_k  w(\tau)\|_{L^2}^\f12\|\dDv_k \nabla_\h w(\tau)\|_{L^2}^\f12 \\
&\qquad\qquad\times\| \dDv_j w(\tau) \|_{L^2}^\f12\| \nabla_\h \dDv_j w(\tau) \|_{L^2}^\f12 \, d\tau\\
\lesssim & d_j \Bigl( \|\fg_1\nabla_\h w\|_{\widetilde L^2_t(\dBl^{0,\f12})}^\f12\|\bigl(\B'\bigr)^\f12
\fg_1 w\|_{\wt L^2_t (\dBl^{0,\f12})}^\f12\sum_{k = j-3}^{-1}  d_k 2^{-k} \\
&\qquad\quad+  \| \fg_1 \nabla_\h w\|_{\widetilde L^2_t(\dHh^{0,-\f12})}^\f12\|\bigl(\B'\bigr)^\f12
\fg_1w\|_{\wt L^2_t (\dHh^{0,-\f12})}^\f12\sum_{k = 0}^{\oo}  d_k  \Bigr) \\
&\quad\times  \| \bigl(\B'\bigr)^\f12\fg_1 w \|_{\wt L^2_t(\dBl^{0,\f12})}^\f12\|\fg_1 \nabla_\h w\|_{\wt L^2_t(\dBl^{0,\f12})}^\f12 .
\end{align*}
Then we get, by applying \eqref{eq5.9}, that 
\begin{equation} \label{eq5.24}
\begin{aligned}
\int_0^t &\fg_1^2(\tau) \big| \bigl( \dDv_j \dRv(w^{\rm h}, \nabla_\h v) \mid \dDv_j w \bigr) \big| \, d\tau 
\lesssim  d_j^2  2^{-j}\| \fg_1 \nabla_\h w\|_{\widetilde L^2_t(X)}  \|\bigl(\B'\bigr)^\f12 \fg_1 w\|_{\wt L^2_t(X)}.
\end{aligned}
\end{equation}

By summarizing the estimates \eqref{eq5.21}, \eqref{eq5.23}, and \eqref{eq5.24}, we conclude the proof of \eqref{eq5.25}. \end{proof}

\begin{proof}[Proof of Lemma \ref{S4lem4}]
Once again, by applying  Bony's decomposition \eqref{homo bony v} to $w^3\p_3 v$ in the vertical variable, we write
\begin{equation} \label{S4eq6a}
w^3\p_3 v = \dTv_{w^3} \p_3 v + \dTv_{\p_3 v} w^3 + \dRv(w^3, \p_3 v).
\end{equation}

 As we focus on the low frequency part, there holds 
 \begin{equation}\label{eq5.26}
   \|\dDv_k \p_3 v(t)\|_{L^2}\lesssim \|\dDv_k v(t)\|_{L^2}\lesssim d_k(t) 2^{-\f{k}2}\| v(t)\|_{\dB^{0,\f12}} \lesssim d_k(t) 2^{-\f{k}2} \bigl(\B'(t)\bigr)^\f12, \  \text{for} \quad k\leq 4.
 \end{equation}
 Then we get, by using the support properties of the Fourier transform of the terms in $\dTv_{u^3} \p_3 w$ and \eqref{interpolation inequality}, that
\begin{align*}
\int_0^t &\fg_1^2(\tau)\big| \bigl( \dDv_j (\dTv_{w^3} \p_3 v) \mid \dDv_j w \bigr) \big| \, d\tau \\
\lesssim &\sum_{|k-j|\leq 4} \int_0^t \fg_1^2(\tau) \| \dSv_{k-1}  w(\tau) \|_{L^4_\h {(L^\oo_\v)}} \|\dDv_k \p_3 v\|_{L^2} \| \dDv_j w(\tau) \|_{L^4_\h(L^2_\v)}  \, d\tau \\
\lesssim &\sum_{|k-j|\leq 4} d_k 2^{-\f{k}2}\int_0^t  \bigl(\B'(\tau) \bigr)^{\frac12} \fg_1^2(\tau)\| {\dSv_4}  w(\tau) \|_{\dB^{0,\f12}}^\f12 \| \nabla_\h {\dSv_4 } w(\tau) \|_{\dB^{0,\f12}}^\f12 \\
&\qquad\qquad\qquad\quad\times \| \dDv_j w(\tau) \|_{L^2}^{\frac12} \| \nabla_\h \dDv_j w(\tau) \|_{L^2}^{\frac12} \, d\tau \\
\lesssim &\Bigl( \sum_{|k-j|\leq 4} d_k 2^{-\frac{k}{2}} \Bigr) d_j  2^{-\frac{j}{2}}  \|\bigl(\B'\bigr)^\f12
\fg_1{\dSv_4 } w\|_{\widetilde L^2_t(\dB^{0,\f12})}^\f12 \|\fg_1{\dSv_4}\nabla_\h w\|_{\wt L^2_t(\dB^{0,\f12})}^\f12 \\
&\qquad\qquad\qquad \times  \|\bigl(\B'\bigr)^\f12\fg_1  w\|_{\wt L^2_t(\dBl^{0,\f12})}^\f12\| \fg_1\nabla_\h w\|_{\widetilde L^2_t(\dBl^{0,\f12})}^\f12,
\end{align*}
which together with \eqref{eq5.5} and \eqref{eq5.14} ensures that 
\begin{equation}\label{eq5.27}
\begin{aligned}
\int_0^t& \fg_1^2(\tau) \big| \bigl( \dDv_j (\dTv_{w^3} \p_3 v) \mid \dDv_j w \bigr) \big| \, d\tau \lesssim d_j^2  2^{-j}\| \fg_1 \nabla_\h w \|_{\widetilde L^2_t(X)} \|\bigl(\B'\bigr)^\f12\fg_1 w\|_{\wt L^2_t(X)}.
\end{aligned}
\end{equation}

It follows from a similar  derivation  of \eqref{eq5.26} that 
\begin{equation}\notag
   \| \dSv_{k-1} \p_3 v(t) \|_{L^2_{\rm h}(L^\infty_{\rm v})}\lesssim \| v(t)\|_{L^2_\h(L^\oo_\v)}\lesssim \| v(t)\|_{\dB^{0,\f12}} \lesssim  \bigl(\B'(t)\bigr)^\f12, \quad \text{for} \quad k\leq 4,
 \end{equation}
from which and  \eqref{interpolation inequality}, we infer
\begin{align*}
\int_0^t & \fg_1^2(\tau)\big| \bigl( \dDv_j (\dTv_{\p_3 v} w^3) \mid \dDv_j w \bigr) \big| \, d\tau \\
\lesssim &\sum_{|k-j|\leq 4} \int_0^t \fg_1^2(\tau) \| \dSv_{k-1} \p_3 v(\tau) \|_{L^2_{\rm h}(L^\infty_{\rm v})} \| \dDv_k w^3(\tau) \|_{L^4_{\rm h}(L^2_{\rm v})} \| \dDv_j w(\tau) \|_{L^4_{\rm h}(L^2_{\rm v})} \, d\tau \\
\lesssim &\sum_{|k-j|\leq 4}\int_0^t \bigl(\B'\bigr)^\f12 \fg_1^2(\tau) \| \dDv_k w(\tau) \|_{L^2}^\f12 \|\nabla_\h \dDv_k  w(\tau) \|_{L^2}^\f12 \\
&\qquad\qquad\times\| \dDv_j w(\tau) \|_{L^2}^\f12 \|\nabla_\h \dDv_j w(\tau) \|_{L^2}^\f12 \, d\tau\\
\lesssim &\Bigl( \sum_{|k-j|\leq 4} d_k 2^{-\frac{k}{2}} \Bigr) d_j  2^{-\frac{j}{2}}  \|\bigl(\B'\bigr)^\f12\fg_1 {\dSv_4}  w \|_{\widetilde L^2_t(\dB^{0,\f12})}^\f12 \|\fg_1{\dSv_4} w\|_{\wt L^2_t(\dB^{0,\f12})}^\f12 \\
&\qquad\qquad\qquad \times  \|\bigl(\B'\bigr)^\f12 \fg_1 w\|_{\wt L^2_t(\dBl^{0,\f12})}^\f12\| \fg_1\nabla_\h w\|_{\widetilde L^2_t(\dBl^{0,\f12})}^\f12.
\end{align*}
By using \eqref{eq5.5} and \eqref{eq5.14}, we arrive at
\begin{equation} \label{eq5.28}
\begin{aligned}
\int_0^t &\fg_1^2(\tau) \big| \bigl( \dDv_j (\dTv_{\p_3 w} u^3) \mid \dDv_j w \bigr) \big| \, d\tau \lesssim d_j^2  2^{-j}\| \fg_1 \nabla_\h w\|_{\widetilde L^2_t(X)} \|\bigl(\B'(\tau)\bigr)^\f12\fg_1  w\|_{\wt L^2_t(X)}.
\end{aligned}
\end{equation}

Finally, for the remainder term with $j \leq -1$, we deduce from Lemma \ref{lemBern} and $\p_3 w_3=-\dive_\h w_\h$ that
\begin{align*}
\int_0^t &\fg_1^2(\tau) \big| \bigl( \dDv_j \dRv(w^3, \p_3 v) \mid \dDv_j w \bigr) \big| \, d\tau \\
\lesssim & 2^{\frac{j}{2}} \sum_{k \geq j-3} \int_0^t \fg_1^2(\tau)\| \dDv_k w^3(\tau) \|_{L^2} \| \wt{\D}^\v_k \p_3 v(\tau) \|_{L^4_\h(L^2_\v)} \| \dDv_j w(\tau) \|_{L^4_{\rm h}(L^2_{\rm v})} \, d\tau\\ 
\lesssim & 2^{\frac{j}{2}} \sum_{k \geq j-3}\int_0^t \fg_1^2(\tau)\| \dDv_k \p_3w^3(\tau) \|_{L^2} \| \wt{\D}^\v_k  v(\tau) \|_{L^4_\h(L^2_\v)} \| \dDv_j w(\tau) \|_{L^4_{\rm h}(L^2_{\rm v})} \, d\tau\\
\lesssim & 2^{\frac{j}{2}} \sum_{k \geq j-3}\int_0^t \fg_1^2(\tau)\| \dDv_k \nabla_\h w^\h(\tau) \|_{L^2} \| \wt{\D}^\v_k  v(\tau) \|_{L^4_\h(L^2_\v)} \| \dDv_j w(\tau) \|_{L^4_{\rm h}(L^2_{\rm v})} \, d\tau.
\end{align*}
Then we get, by repeating the proof of \eqref{eq5.11}, that
\begin{equation} \label{eq5.29}
\begin{aligned}
\int_0^t  \fg_1^2(\tau)\big| \bigl( \dDv_j \dRv(w^3, \p_3 v) &\mid \dDv_j w \bigr) \big| \, d\tau \lesssim d_j^2  2^{-j} \|\bigl(\B'\bigr)^\f12 \fg_1 w\|_{\wt L^2_t({X})}^\f12\| \fg_1\nabla_\h w \|_{\widetilde L^2_t(X)}^\f32.
\end{aligned}
\end{equation}

By summarizing the estimates \eqref{eq5.27}, \eqref{eq5.28}, and \eqref{eq5.29}, we achieve \eqref{eq5.30}.  This completes the proof of Lemma \ref{S4lem4}.\end{proof}

Our second estimate is concerned with the high frequency part of $w.$

\begin{prop}\label{S4prop2}
  {\sl  Let $u,v$ be smooth enough solutions of $(ANS)$ on $[0,T]$ and $w\eqdefa u-v$. Let $\B(t)$ be defined by \eqref{eq:prop1.6}. If $\|w(t)\|_{X}\leq 2^{-10}$ for all $0\leq t\leq T$, then there exists some constant $C>0$ such that for any {$C_3>0$} and $\fg_1(t)\eqdefa e^{-{C_3} \B(t)},$
\begin{equation}\label{eq:prop5.2}
\begin{aligned}
    &\|\fg_1 w\|_{\wt L^\oo_T(\dHh^{0,-\f12})}^2 +2 \|\fg_1\nabla_\h  w\|_{\wt L^2_T(\dHh^{0,-\f12})}^2+2{C_3} \|\bigl(\B'\bigr)^\f12 \fg_1  w\|_{\wt L^2_T(\dHh^{0,-\f12})}^2 \\
    &\leq \| w_{0}\|_{\dHh^{0,-\f12}}^2 + C\sum_{\kappa=1}^3 \|\fg_1\nabla_\h w\|_{\wt L^2_T(X)}^{2-\f\kappa2}\\
  &\qquad\times \Bigl(\int_0^T \B'(t)\bigl( -\ln\|\fg_1w\|_{\wt L^\oo_t(X)}^2\bigr) \ln\bigl( -\ln\|\fg_1 w\|_{\wt L^\oo_t(X)}^2\bigr)\|\fg_1 w\|_{\wt L^\oo_t(X)}^2\,dt\Bigr)^\f{\kappa}4.
\end{aligned}
\end{equation}
  }
\end{prop}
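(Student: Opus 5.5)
We would follow the scheme of Proposition \ref{S4prop1}, now for $j\geq 0$. Starting from the localized identity \eqref{eq5.3} (valid for every $j$), we multiply by $2^{-j}$, take the supremum in $t\in[0,T]$, and sum over $j\geq0$. The left side of \eqref{eq:prop5.2} then appears directly. The whole task is to bound
\[
\sum_{j\geq 0}2^{-j}\int_0^T\fg_1^2\bigl|\bigl(\dDv_j(u\cdot\nabla w+w\cdot\nabla v)\mid\dDv_j w\bigr)\bigr|\,d\tau .
\]
Since $\dHh^{0,-\f12}$ is an $\ell^2$-type seminorm, the summation uses Cauchy--Schwarz in $j$ with generic $c_j\in\ell^2$, rather than $d_j$ as in Section \ref{Sect3}. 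The decomposition \eqref{S4eq1a} and Bony's decomposition in the vertical variable are used term by term, as in Lemmas \ref{S4lem1}--\ref{S4lem4}.

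The terms $u^\h\cdot\nabla_\h w$, $w^\h\cdot\nabla_\h v$ and $u^3\p_3 w$ cost no vertical derivative on $w$. For $u^3\p_3w$ we use the commutator decomposition \eqref{eq3.8a}, integrate by parts in $I_3$, and use $\p_3u^3=-\dive_\h u^\h$. The low-frequency pieces of $w$ (indices $\le -1$) are controlled by the $\dBl^{0,\f12}$ part of $X$. These terms follow Paicu's $H^{0,-\f12}$ computations, with \eqref{eq5.4}, \eqref{eq5.22} and \eqref{interpolation inequality} supplying the factors $(\B')^{\f14}$ or $(\B')^{\f12}$. They give contributions of the form $\|\fg_1\na_\h w\|_{\wt L^2_T(X)}^{2-\f\kappa2}\|(\B')^{\f12}\fg_1 w\|_{\wt L^2_T(X)}^{\f\kappa2}$. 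We then use the trivial bound
\[
\|(\B')^{\f12}\fg_1 w\|_{\wt L^2_T(X)}^2\lesssim\int_0^T\B'\|\fg_1w\|_{\wt L^\infty_t(X)}^2\,dt ,
\]
which is dominated by the right side of \eqref{eq:prop5.2} because $-\ln\|\fg_1w\|^2\ge 20\ln2$ under the smallness hypothesis. This shows how the integral appears with power $\kappa/4$.

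The main obstacle is $w^3\p_3 v$, and especially $\dTv_{\p_3 v}w^3$ and the remainder, where the vertical derivative falls on $v$, which is only in $\dB^{0,\f12}$. Here we would follow the logarithmic argument of \cite{Paicu}. First write $\dDv_k w^3$ through $\dive_\h w^\h$, using $\|\dDv_k w^3\|\lesssim 2^{-k}\|\dDv_k\na_\h w^\h\|$ as in \eqref{eq3.11}. Then split $\dSv_{k-1}\p_3 v$ at a vertical frequency $2^{N}$ with $N\ge 0$ free:
\[
\sum_{\ell\le N}2^{\ell}\|\dDv_\ell v\|\lesssim 2^{N/2}\|v\|_{\dB^{0,\f12}}\quad\text{(roughly)},\qquad \sum_{\ell>N}\ \text{bounded by the tail in }\ell^2 .
\]
Combined with the $2^{-k}$ gain and the weight $2^{-j}$, the low part gives $\sqrt{N}$ or $N$ growth times $\B'$, while the high part contributes a factor $2^{-N}$ times a bounded quantity. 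We choose $N\approx\ln\bigl(-\ln\|\fg_1w\|_{\wt L^\infty_t(X)}^2\bigr)$-adapted, essentially $2^{N}\sim\|\fg_1 w\|^{-2}$, to balance the two pieces. This produces the factor $(-\ln\|\cdot\|^2)\ln(-\ln\|\cdot\|^2)$ in \eqref{eq:prop3.1}'s analogue, as in \eqref{eq:prop1.3}; the iterated logarithm comes from the $\ell^2$ summation of the dyadic pieces $2^{-j}$ over $j\le N$.

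Because the time-weighted Chemin--Lerner norm forbids pointwise-in-time Gronwall, we would carry out the splitting inside the time integral with $N=N(t)$ depending on $\|\fg_1w\|_{\wt L^\infty_t(X)}$. Monotonicity in $t$ makes this legitimate. The hypothesis $\|w\|_X\le 2^{-10}$ guarantees $N\ge1$ and that $x\mapsto x(-\ln x)\ln(-\ln x)$ is increasing on the relevant range. After Hölder in time, this leaves $\|\fg_1\na_\h w\|_{\wt L^2_T(X)}^{2-\f\kappa2}$ times the $\kappa/4$ power of the Osgood integral. Finally, the term $C_3\|(\B')^{\f12}\fg_1w\|^2$ on the left remains available, so no absorption is needed here; absorption is deferred to the combination with Proposition \ref{S4prop1}.
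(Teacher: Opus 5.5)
Your opening reduction (multiply \eqref{eq5.3} by $2^{-j}$, take the supremum in time, sum over $j\ge0$, then finish with the monotonicity of $\tau\mapsto-\tau\ln\tau\ln(-\ln\tau)$ and H\"older in time) matches the paper. The term-by-term analysis, however, has a genuine gap.

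\textbf{The ``easy'' terms are not log-free.} You claim that $u^{\rm h}\cdot\nabla_{\rm h} w$, $u^3\partial_3 w$ and $w^{\rm h}\cdot\nabla_{\rm h} v$ give contributions without any logarithm. This fails for their remainders $\dRv(u^{\rm h},\nabla_{\rm h} w)$, $\dRv(u^3,\partial_3 w)$ and $\dRv(w^{\rm h},\nabla_{\rm h} v)$. Each pairs a $\dB^{0,\frac12}$ factor with a $\dHh^{0,-\frac12}$ factor, so the vertical regularities add up to $0$. After the weight $2^{-j}$ you are left with $\sum_{j\ge0}c_j\sum_{k\ge j-3}d_kc_k$. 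This is not bounded in general: its supremum over $(c_j)$ is the $\ell^2$ norm of a tail of the $\ell^1$ sequence $(d_kc_k)$, which can be of size $\sqrt n$.

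The paper handles this by cutting at $J(t)$ with $2^{-J(t)}\approx\|w(t)\|_{\dHh^{0,-\frac12}}$.
\begin{itemize}
\item For $j\le J(t)$ it uses $\sum_{j\le J}c_j\lesssim\sqrt J$.
\item For $j\ge J(t)$ it uses $w=u-v$ and the a priori $\dB^{0,\frac12}$ bounds to gain $2^{-J(t)}$.
\end{itemize}
For $\dRv(u^3,\partial_3 w)$ and $\dRv(w^{\rm h},\nabla_{\rm h} v)$ this costs only $(-\ln\|w\|_X^2)^{\frac12}$. For $\dRv(u^{\rm h},\nabla_{\rm h} w)$, however, the $L^4_{\rm h}$ interpolation gives $\bigl(J^2{\rm B}'\|w\|^2\bigr)^{\frac14}\|\nabla_{\rm h} w\|^{\frac32}$. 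After Young's inequality this is a factor $(\ln\|w\|^2)^2$, for which Osgood's lemma fails.

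\textbf{The missing idea.} What is needed is the sharp inequality \eqref{sharp interpolation}, $\|f\|_{L^p(\mathbb{R}^2)}\le C\sqrt p\,\|f\|_{L^2}^{2/p}\|\nabla_{\rm h} f\|_{L^2}^{1-2/p}$, applied with $p=\ln J(t)+2$ so that $(Jp)^{p/(p-2)}\lesssim J\ln J$. This is what produces the factor $(-\ln)\ln(-\ln)$. The $\ell^2$ summation over $j\le N$ that you invoke only yields $\sqrt J$.

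\textbf{Wrong target in $w^3\partial_3 v$.} Your treatment aims at the wrong paraproduct.
\begin{itemize}
\item $\dTv_{\partial_3 v}w^3$ is harmless. The factor $2^k$ from $\partial_3$ acting on $\dSv_{k-1}v$ is exactly compensated by $\|\dDv_k w^3\|_{L^2}\lesssim 2^{-k}\|\dDv_k\nabla_{\rm h} w\|_{L^2}$; see \eqref{eq5.57}.
\item $\dTv_{w^3}\partial_3 v$, which you never treat, is where the loss sits: $\|\dSv_{k-1}w^3\|_{L^2_{\rm h}(L^\infty_{\rm v})}$ grows like $\sqrt k\,\|\nabla_{\rm h} w\|_{\dHh^{0,-\frac12}}$.
\end{itemize}
The split must therefore be applied to $\dSv_{k-1}w^3$ at $J(t)$, not to $\dSv_{k-1}\partial_3 v$, whose high part carries no $2^{-N}$ gain. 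Above $J(t)$, $\partial_3 w^3=-\dive_{\rm h} w^{\rm h}$ and $w=u-v$ give $({\rm B}')^{\frac12}2^{-J(t)}$. The same $L^p$ trick then closes the estimate, and $\dRv(w^3,\partial_3 v)$ reduces to the form of $\dRv(u^{\rm h},\nabla_{\rm h} w)$.

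\textbf{Minor points.}
\begin{itemize}
\item Your choice of $N$ is inconsistent: $N\approx\ln(-\ln x)$ and $2^N\sim x^{-1}$ cannot both hold.
\item Your ``trivial bound'' with $\wt L^2_T(X)$ on the left fails for the $\ell^1$ low-frequency part. Estimate pointwise in $t$ with $\|w(t)\|_X$ and non-tilde $L^2_T$ norms, and only then pass to $\|\fg_1 w\|_{\wt L^\infty_t(X)}$.
\end{itemize}
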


\begin{proof}
 As we are dealing with the classical Sobolev type norm $\|w\|_{\dHh^{0,-\frac12}}$, which brings the advantage of avoiding the technical use of time-weighted Chemin–Lerner type norm. More precisely, instead of analyzing the nonlinear terms in \eqref{eq5.3} for each $j\geq 0,$ we first take the supremum of \eqref{eq5.3} over $t \in [0,T]$, then multiply it by $2^{-j}$, and finally sum up the resulting inequalities for $j \geq 0$ to obtain
\begin{equation}\label{eq5.32}
\begin{aligned}
&\|\fg_1 w\|_{\widetilde L^\infty_T(\dHh^{0,-\frac12})}^2 + 2 \|\fg_1 \nabla_{\mathrm{h}} w\|_{L^2_T(\dHh^{0,-\frac12})}^2  + 2{C_3} \| (\B'(t))^{\frac12} \fg_1 w\|_{L^2_T(\dHh^{0,-\frac12})}^2 \\
&\leq \| w_{0}\|_{\dHh^{0,-\frac12}}^2 + 2 \int_0^T \fg_1^2(t)\sum_{j \geq 0} 2^{-j} \bigl| \bigl( \dot{\Delta}_j^{\mathrm{v}} (u \cdot \nabla w + w \cdot \nabla v) \mid \dot{\Delta}_j^{\mathrm{v}} w \bigr) \bigr| \, dt.
\end{aligned}
\end{equation}
Here we used the fact that the summation over $j$ commutes with the integration over time, which enables us to consider the inner product for nonlinear terms in $\dot{H}^{0,-\frac12}$.   By virtue of \eqref{S4eq1a},  below we handle  the nonlinear term in \eqref{eq5.32} by dealing with the following four  parts separately.\smallskip

 \begin{lem}\label{S4lem5}
 {\sl Under the assumptions of Proposition \ref{S4prop2}, one has
 \begin{equation}\label{eq5.41}
\begin{aligned}
  \sum_{j\geq 0}2^{-j}\big| \bigl( \dDv_j (u^{\rm h} \cdot \nabla_\h w) &\mid \dDv_j w \bigr) \big| 
  \lesssim \sum_{\kappa=1}^3 \|\nabla_\h w(t)\|_{X}^{2-\f\kappa2}\\
  &\times \Bigl(\B'(t)\bigl(-\ln \|w(t)\|_{X}^2\bigr)\ln\bigl(-\ln \|w(t)\|_{X}^2\bigr)\| w(t)\|_{X}^2\Bigr)^{\f\kappa4}.
\end{aligned}
\end{equation}
}\end{lem}

 \begin{lem}\label{S4lem6}
 {\sl Under the assumptions of Proposition \ref{S4prop2}, one has 
 \begin{equation}\label{eq5.47}
\begin{aligned}
 &  \sum_{j\geq 0}2^{-j} \big| \bigl( \dDv_j (u^3 \p_3 w) \mid \dDv_j w \bigr) \big|  \lesssim \sum_{\kappa=2}^3 \|\nabla_\h w(t)\|_{X}^{2-\f\kappa2}
\Bigl(\B'(t)\bigl(-\ln \|w(t)\|_{X}^2\bigr)\|w(t)\|_{X}^2\Bigr)^{\f\kappa4}.
\end{aligned}
\end{equation}}
\end{lem}

 \begin{lem}\label{S4lem7}
 {\sl Under the assumption of Proposition \ref{S4prop2}, one has
 \begin{equation}\label{eq5.52}
\begin{aligned}
  \sum_{j\geq 0}2^{-j}   \big| \bigl( \dDv_j ({ w^{\rm h} \cdot \nabla_\h u}) \mid \dDv_j w \bigr) \big|\lesssim &\sum_{\kappa=2}^3\|\nabla_\h w(t)\|_{X}^{2-\f\kappa2}\\
  &\times\Bigl(\B'(t)\bigl(-\ln \|w(t)\|_{X}^2\bigr)\|w(t)\|_{X}^2\Bigr)^{\f\kappa4}.
\end{aligned}
\end{equation} }\end{lem}

 \begin{lem}\label{S4lem8}
 {\sl Under the assumptions of Proposition \ref{S4prop2}, one has
 \begin{equation}\label{eq5.59}\begin{aligned}
  \sum_{j\geq 0}2^{-j} \big| \bigl( \dDv_j (w^3 \p_3 v) &\mid \dDv_j w \bigr) \big| 
 \lesssim  \sum_{\kappa=1}^3\|\nabla_\h w(t)\|_{X}^{2-\f\kappa2}\\
  &\times\Bigl(\B'(t)\bigl(-\ln \|w(t)\|_{X}^2\bigr)\ln\bigl(-\ln \|w(t)\|_{X}^2\bigr)\|w(t)\|_{X}^2\Bigr)^{\f\kappa4}.
\end{aligned}
\end{equation} }\end{lem} 
  
We postpone the proof of the above lemmas after we finish the proof of Proposition \ref{S4prop2}.\\

By substituting the estimates (\ref{eq5.41}-\ref{eq5.59}) (with $w$ there being replaced by $\fg_1w$) into \eqref{eq5.32},  we obtain
  \begin{equation}\label{eq5.60}
  \begin{aligned}
    &\|\fg_1 w\|_{\wt L^\oo_T(\dHh^{0,-\f12})}^2 +2 \|\fg_1 \nabla_\h  w\|_{\wt L^2_T(\dHh^{0,-\f12})}^2+2{C_3} \|\bigl(\B'\bigr)^\f12 \fg_1  w\|_{\wt L^2_T(\dHh^{0,-\f12})}^2 \\
    &\leq \| w_{0}\|_{\dHh^{0,-\f12}}^2 + C\sum_{\kappa=1}^3\int_0^T  \|\fg_1\nabla_\h w(t)\|_{X}^{2-\f\kappa2}\\
  &\qquad\times\Bigl(\B'(t)\bigl(-\ln \|\fg_1 w(t)\|_{X}^2\bigr)\ln\bigl(-\ln \|\fg_1 w(t)\|_{X}^2\bigr)
  \|\fg_1w(t)\|_{X}^2\Bigr)^{\f\kappa4}\,dt.
  \end{aligned}
\end{equation}
Observing that \beq \label{P1t}
\Phi_1(\tau)\eqdefa-\tau\ln\tau\ln\left(-\ln\tau\right)\quad\mbox{is an increasing function of }\ \ \tau \ \ \mbox{if} \ \ 0<\tau<2^{-10},
\eeq
from which and the Minkousi inequality, we infer
\begin{align*}
  &\int_0^T \sum_{\kappa=1}^3\|\fg_1\nabla_\h w(t)\|_{X}^{2-\f\kappa2}\Bigl(\B'(t)\bigl(-\ln \|\fg_1w(t)\|_{X}^2\bigr)\ln\bigl(-\ln \|\fg_1w(t)\|_{X}^2\bigr)\|\fg_1w(t)\|_{X}^2\Bigr)^{\f\kappa4}\,dt\\
  &\lesssim \sum_{\kappa=1}^3\Bigl(\int_0^T \B'(t)\bigl(-\ln\|\fg_1w(t)\|_{X}^2\bigr) \ln\bigl(-\ln \|\fg_1w(t)\|_{X}^2\bigr)\|\fg_1 w(t)\|_{X}^2\,dt\Bigr)^\f{\kappa}4  \|\fg_1\nabla_\h w\|_{ L^2_T(X)}^{2-\f\kappa2}\\
  &\lesssim \sum_{\kappa=1}^3\Bigl(\int_0^T \B'(t)\bigl( -\ln\|\fg_1 w\|_{\wt L^\oo_t(X)}^2\bigr) \ln\bigl( -\ln\|\fg_1w\|_{\wt L^\oo_t(X)}^2\bigr) \|\fg_1 w\|_{\wt L^\oo_t(X)}^2\,dt\Bigr)^\f{\kappa}4 
  \|\fg_1\nabla_\h w{\|}_{\wt L^2_T(X)}^{2-\f\kappa2}.
\end{align*}

By substituting the above estimate into \eqref{eq5.60}, we arrive at \eqref{eq:prop5.2}, which finishes the proof of Proposition \ref{S4prop2}.
\end{proof}

Let us now present the proof of Lemmas \ref{S4lem5}-\ref{S4lem8}.

  \begin{proof}[Proof of  Lemma \ref{S4lem5}]  We first get, by applying  Bony's decomposition \eqref{homo bony v} to write $u^{\rm h} \cdot \nabla_\h w$  as in \eqref{S4eq2a}.

Considering the support properties to the Fourier transform of the terms in $\dTv_{u^{\rm h}} \nabla_\h w$, we get, by using  \eqref{eq5.4} and \eqref{interpolation inequality}, that for $j\geq 0$,
\begin{equation}\notag
\begin{aligned}
  \big| \bigl( &\dDv_j (\dTv_{u^{\rm h}} \nabla_\h w) \mid \dDv_j w \bigr) \big| \,  \\
\lesssim &\sum_{|k-j|\leq 4} \|\dSv_{k-1}u^\h(t)\|_{L^4_\h(L^\oo_\v)} \| \dDv_k \nabla_\h w(t) \|_{L^2} \| \dDv_j w(t) \|_{L^2}^{\frac12} \| \nabla_\h \dDv_j w(t) \|_{L^2}^{\frac12}  \\
\lesssim &\sum_{|k-j|\leq 4}   \bigl(\B'(t) \bigr)^{\frac14} \| \dDv_k \nabla_\h w(t) \|_{L^2} \| \dDv_j w(t) \|_{L^2}^{\frac12} \| \nabla_\h \dDv_j w(t) \|_{L^2}^{\frac12}\\
\lesssim &\Bigl( \sum_{|k-j|\leq 4} c_k(t) 2^{\frac{k}{2}} \Bigr)  c_j(t)  2^{\frac{j}{2}}\bigl( \B'(t) \bigr)^{\frac14} \|(\mathrm{Id}- \dSv_{-5}) \nabla_\h w(t) \|_{\dH^{0,-\f12}}  \|  w(t)\|_{\dHh^{0,-\f12}}^\f12 \|  \nabla_\h w(t) \|_{\dHh^{0,-\f12}}^\f12.
\end{aligned}
\end{equation}
Notice that
\begin{equation}\label{eq5.33}
  \begin{aligned}
    \|(\mathrm{Id}- \dSv_{-5}) \nabla_\h w(t) \|_{\dH^{0,-\f12}}
    &\leq \|  \nabla_\h w(t) \|_{\dHh^{0,-\f12}} +C\sum_{j=-5}^{-1} 2^{-\f{j}2} \|\dDv_j \nabla_\h w(t) \|_{ L^2}\\
    &\leq \|  \nabla_\h w(t) \|_{\dHh^{0,-\f12}} +C \| \nabla_\h w(t) \|_{ \dBl^{0,\f12}}\leq C \| \nabla_\h w(t) \|_{X},
  \end{aligned}
\end{equation}
we thus deduce that
\begin{equation}\label{eq5.34}
\begin{aligned}
  \big| \bigl( \dDv_j (\dTv_{u^{\rm h}} &\nabla_\h w) \mid \dDv_j w \bigr) \big|  \lesssim c_j^2(t)  2^{j}  \bigl(\B'(t)\bigr)^\f14 \|  w(t)\|_{{\color{red}X}}^\f12 
  \|\nabla_\h w(t) \|_{X}^\f32.
\end{aligned}
\end{equation}

Wheres by applying \eqref{eq5.7} and \eqref{interpolation inequality}, we find
\begin{align*}
 \big| \bigl(& \dDv_j (\dTv_{\nabla_\h w} u^{\rm h}) \mid \dDv_j w \bigr) \big|  \\
\lesssim &\sum_{|k-j|\leq 4}  \| \dSv_{k-1} \nabla_\h w(t) \|_{L^2_{\rm h}(L^\infty_{\rm v})} \| \dDv_k u^{\rm h}(t) \|_{L^4_{\rm h}(L^2_{\rm v})} \| \dDv_j w(t) \|_{L^4_{\rm h}(L^2_{\rm v})}  \\
\lesssim &\sum_{|k-j|\leq 4} d_k(t) 2^{-\f{k}2} \bigl(\B'(t)\bigr)^\f14\| \dSv_{k-1} \nabla_\h w(t) \|_{L^2_{\rm h}(L^\infty_{\rm v})} \| \dDv_j w(t) \|_{L^2}^\f12 \|\nabla_\h \dDv_j w(t) \|_{L^2}^\f12 \\
\lesssim &\Bigl( \sum_{|k-j|\leq 4} d_k(t) 2^{\frac{k}{2}} \Bigr) c_j(t)  2^{\frac{j}{2}} \bigl(\B'(t)\bigr)^\f14 \sup_{k\geq -4}\Bigl(2^{-k}\| \dSv_{k-1} \nabla_\h w(t) \|_{L^2_{\rm h}(L^\infty_{\rm v})}\Bigr)\\
& \qquad\qquad\qquad\qquad\qquad\qquad\qquad\qquad\qquad\times \|  w(t)\|_{\dHh^{0,-\f12}}^\f12\|  \nabla_\h w(t) \|_{\dHh^{0,-\f12}}^\f12,
\end{align*}
Yet it follows from Lemma \ref{lemBern} that for $k\geq -4$,
\begin{equation}\label{eq5.36}
\begin{split}
\| \dSv_{k-1} \nabla_\h w(t) \|_{L^2_{\rm h}(L^\infty_{\rm v})}
&\lesssim \sum_{\ell\leq k-2} 2^{\frac{\ell}{2}} \|  \dDv_{\ell} \nabla_\h w(t) \|_{L^2} \\
& \lesssim \|\nabla_\h w(t)\|_{\dBl^{0,\f12}}+\|\nabla_\h w(t)\|_{\dHh^{0,-\f12}}\sum_{\ell=0}^{\max(k-2,0)} c_\ell(t) 2^\ell \\
&\lesssim 2^{k} \Bigl(\|\nabla_\h w(t)\|_{\dBl^{0,\f12}}+\|\nabla_\h w(t)\|_{\dHh^{0,-\f12}}\Bigr) ,
\end{split}
\end{equation}
which together with the fact: $\sum_{|k-j|\leq 4} d_k 2^{\f{k}2}\lesssim c_j 2^{\f{j}2},$ implies
\begin{equation} \label{eq5.37}
\begin{aligned}
  \big| \bigl( \dDv_j (\dTv_{\nabla_\h w} u^{\rm h})& \mid \dDv_j w \bigr) \big| \lesssim c_j^2(t)  2^{j} \bigl(\B'(t)\bigr)^\f14  \|  w(t)\|_{{\color{red}X}}^\f12 
  \|\nabla_\h w(t) \|_{X}^\f32.
\end{aligned}
\end{equation}

Finally, for the remainder term with $j \geq 0$, we deduce from Lemma \ref{lemBern} 
that
\begin{equation}\notag
  \begin{aligned}
  \big| \bigl( \dDv_j \dRv(u^{\rm h},& \nabla_\h w) \mid \dDv_j w \bigr) \big|  \\
\lesssim &  2^{\frac{j}{2}} \sum_{k \geq j-3} \inf_{p\in [4,\oo[} \Bigl(\| \wt{\D}^\v_k u^{\rm h}(t) \|_{L^p_{\rm h}(L^2_{\rm v})} \| \dDv_k \nabla_\h w(t) \|_{L^2} \| \dDv_j w(t) \|_{L^{\f{2p}{p-2}}_{\rm h}(L^2_{\rm v})}\Bigr).
\end{aligned}
\end{equation}
Recall the Sobolev embedding inequality with sharp coeffient:
\begin{equation}\label{sharp interpolation}
  \|f\|_{L^p(\R^2)} \leq C \sqrt{p} \|f\|_{L^2(\R^2)}^\f{2}p \|\nabla_\h f\|_{L^2(\R^2)}^{1-\f{2}p},
\end{equation}
we obtain for any $p\geq 4$ that,
\begin{equation}\label{eq5.38a}
  \| \wt{\D}^\v_k u^{\rm h}(t) \|_{L^p_{\rm h}(L^2_{\rm v})}\lesssim d_k(t)\sqrt{p}2^{-\f{k}2} \|u(t)\|_{\dB^{0,\f12}}^\f{2}p \|\nabla_\h u(t)\|_{\dB^{0,\f12}}^{1-\f2p}\lesssim d_k(t)\sqrt{p} 2^{-\f{k}2} \bigl(\B'(t)\bigr)^{\f12-\f1p},
\end{equation}
and 
\begin{equation}\label{eq5.38b}
  \| \dDv_j w(t) \|_{L^{\f{2p}{p-2}}_{\rm h}(L^2_{\rm v})} \lesssim \| \dDv_j w(t) \|_{L^2}^{1-\f2p} \| \dDv_j \nabla_\h w(t) \|_{L^2}^{\f2p}.
\end{equation}
As a result, we arrive at
\begin{equation}\label{eq5.38}
  \begin{aligned}
 \big| \bigl( &\dDv_j \dRv(u^{\rm h}, \nabla_\h w) \mid \dDv_j w \bigr) \big|  
\lesssim   2^{\frac{j}{2}} \sum_{k \geq j-3} d_k(t) 2^{-\f{k}2} \\
&\qquad\times\inf_{p\in [4,\oo[}\Bigl(\sqrt{p}\bigl(\B'(t)\bigr)^{\f12-\f1p}\| \dDv_k \nabla_\h w(t) \|_{L^2} \| \dDv_j w(t) \|_{L^2}^{1-\f2p}\| \nabla_\h \dDv_j w(t) \|_{L^2}^{\f2p} \Bigr).
\end{aligned}
\end{equation}
 Motivated by \cite{Paicu}, for any fixed $t,$ we introduce  some integer $J(t)$ such that 
\begin{equation}\label{def:J(t)}
  2^{-J(t)}\leq \|w(t)\|_{\dHh^{0,-\f12}}\leq 2^{1-J(t)} \quad\Longleftrightarrow \quad J(t) \approx -\ln  \|w(t)\|_{\dHh^{0,-\f12}}^2.
\end{equation}
Under the assumption that  $\|w(t)\|_X\leq 2^{-10}$, we always have $J(t)\geq 10$. For $0\leq j\leq J(t)$ 
and $p\geq 4$, we have
\begin{align*}
  \sqrt{p}&2^{\frac{j}{2}} \sum_{k \geq j-3} d_k(t) 2^{-\f{k}2}  \bigl(\B'(t)\bigr)^{\f12-\f1p}\| \dDv_k \nabla_\h w(t) \|_{L^2} \| \dDv_j w(t) \|_{L^2}^{1-\f2p}\| \nabla_\h \dDv_j w(t) \|_{L^2}^{\f2p} \\
  \lesssim &\sqrt{p} c_j(t) 2^j\bigl(\B'(t)\bigr)^{\f12-\f1p}\Bigl(  \|\nabla_\h w(t)\|_{\dBl^{0,\f12}} \sum_{-3\leq k\leq -1} d_k(t) 2^{-k} +  \|\nabla_\h w(t)\|_{\dHh^{0,-\f12}} \sum_{ k\geq0} d_k(t) c_k(t) \Bigr) \\
  &\qquad\qquad\times \| w(t)\|_{\dHh^{0,-\f12}}^{1-\f2p} \|\nabla_\h w(t)\|_{\dHh^{0,-\f12}}^\f2p,
  \end{align*}
  from which { and $\sum_{j=0}^{J(t)}c_j(t)\lesssim (J(t))^\f12$}, for any $p\geq 4,$ we infer
  \begin{align*}
 & \sum_{j=0}^{J(t)}2^{-j} \big| \bigl( \dDv_j \dRv(u^{\rm h}, \nabla_\h w) \mid \dDv_j w \bigr) \big| \\  
  &\lesssim \|\nabla_\h w(t)\|_{X} \Bigl(\B'(t)\bigl(J(t)p\bigr)^{\f{p}{p-2}}\| w(t)\|_{\dHh^{0,-\f12}}^2\Bigr)^{\f12-\f1p} \|\nabla_\h w(t)\|_{\dHh^{0,-\f12}}^\f2p.
\end{align*}
Yet by taking the special choice $p=\ln J(t)+2,$  one has
$$
  \bigl(J(t)p\bigr)^{\f{p}{p-2}} \lesssim \bigl(J(t)\ln J(t)\bigr)^{1+\f2{\ln J(t)}}\lesssim J(t)\ln J(t) \Bigl(J(t)\ln J(t)\Bigr)^{\f2{\ln J(t)}} \lesssim J(t)\ln J(t),
$$
where we used $(e^\alpha \alpha)^{\f2\alpha}= e^2 \alpha^{\f2\alpha}\leq e^{2+\f2e} $ with $\alpha=\ln J(t)$. Together with \eqref{def:J(t)} and the Young's inequality, we conclude that \begin{align*}
 \sum_{j=0}^{J(t)}&2^{-j} \big| \bigl( \dDv_j \dRv(u^{\rm h}, \nabla_\h w) \mid \dDv_j w \bigr) \big| \lesssim \sum_{\kappa=1}^2\|\nabla_\h w(t)\|_{X}^{2-\f\kappa2}\\
  &\qquad\qquad\times\Bigl(\B'(t)\bigl(-\ln \|w(t)\|_{\dHh^{0,-\f12}}^2\bigr)\ln\bigl(-\ln \|w(t)\|_{\dHh^{0,-\f12}}^2\bigr)\| w(t)\|_{\dHh^{0,-\f12}}^2\Bigr)^{\f\kappa4} .
\end{align*}

On the other hand, when $j\geq J(\tau)$, due to $w=u-v$, we have
\begin{align*}
  \sum_{k \geq j-3} d_k(t) 2^{-\f{k}2}  \| \dDv_k \nabla_\h w(t) \|_{L^2}
  \lesssim \|\nabla_\h (u,v)(t)\|_{\dB^{0,\f12}} \sum_{k\geq j-3}d_k(t) 2^{-k}\lesssim c_j(t) 2^{-J(t)} \bigl(\B'(t)\bigr)^\f12.
\end{align*}
then by taking 
 $p=4$ in \eqref{eq5.38}, we find
\begin{align*}
  \big| \bigl( \dDv_j \dRv(u^{\rm h}, \nabla_\h w) \mid \dDv_j w \bigr) \big|
  &\lesssim c_j^2(t) 2^j 2^{-J(t)} \bigl(\B'(t)\bigr)^\f34  \| w(t)\|_{\dHh^{0,-\f12}}^\f12 \|\nabla_\h w(t)\|_{\dHh^{0,-\f12}}^\f12 \\
  &\lesssim c_j^2(t) 2^j \bigl(\B'(t)\bigr)^\f34   \| w(t)\|_{\dHh^{0,-\f12}}^\f32 \|\nabla_\h w(t)\|_{\dHh^{0,-\f12}}^\f12.
\end{align*}

By combining these two estimates for the different regimes of $j$, we deduce from the Minkowski inequality that 
\begin{align*}
 &\sum_{j\geq 0}2^{-j} \big| \bigl( \dDv_j \dRv(u^{\rm h}, \nabla_\h w) \mid \dDv_j w \bigr) \big|  
\lesssim  \sum_{\kappa=1}^3\|\nabla_\h w(t)\|_{X}^{2-\f\kappa2}\\
  &\qquad\qquad\times\Bigl(\B'(t)\bigl(-\ln \|w(t)\|_{\dHh^{0,-\f12}}^2\bigr)\ln\bigl(-\ln \|w(t)\|_{\dHh^{0,-\f12}}^2\bigr)\| w(t)\|_{\dHh^{0,-\f12}}^2\Bigr)^{\f\kappa4}.
\end{align*}
which together with \eqref{P1t} ensures that
\begin{equation}\label{eq5.39}
\begin{aligned}
 &\sum_{j\geq 0}2^{-j} \big| \bigl( \dDv_j \dRv(u^{\rm h}, \nabla_\h w) \mid \dDv_j w \bigr) \big|  
\lesssim  \sum_{\kappa=1}^3\|\nabla_\h w(t)\|_{X}^{2-\f\kappa2}\\
  &\qquad\qquad\qquad\times\Bigl(\B'(t)\bigl(-\ln \|w(t)\|_{X}^2\bigr)\ln\bigl(-\ln \|w(t)\|_{X}^2\bigr)\| w(t)\|_{X}^2\Bigr)^{\f\kappa4}.
\end{aligned}
\end{equation}

By summarizing the estimates \eqref{eq5.34}, \eqref{eq5.37}, and \eqref{eq5.39}, we conclude the proof of  \eqref{eq5.41}.
\end{proof}

 \begin{proof}[Proof of Lemma \ref{S4lem6}]  We get, by applying  Bony's decomposition \eqref{homo bony v} to  decompose $u^3 \p_3 w$  as in \eqref{S4eq3a}
 
For the paraproduct $\dTv_{u^3} \p_3 w$, by applying the commutator argument as in \eqref{eq3.8a} and using \eqref{interpolation inequality},  we find
  \begin{align*}
|\bigl( \dDv_j (T^{\rm v}_{u^3} \partial_3 w) \mid  \dDv_j w \bigr)|
\lesssim&  \sum_{|k-j|\leq 4} \|\nabla_\h u(t)\|_{\dB^{0,\f12}} \| \dDv_k w(t) \|_{L^4_{\rm h}(L^2_{\rm v})} \|  \dDv_j w(t)\|_{L^4_{\rm h}(L^2_{\rm v})}\\
\lesssim &\sum_{|k-j|\leq 4} \bigl( \B'(t) \bigr)^{\frac12} \| \dDv_k w(t) \|_{L^2}^\f12\| \nabla_\h\dDv_k w(t) \|_{L^2}^\f12  \|  \dDv_j w(t) \|_{L^2}^\f12\\
&\qquad\qquad\times \| \nabla_\h \dDv_j w(t) \|_{L^2}^\f12\\
\lesssim& \Bigl( \sum_{|k-j|\leq 4} c_k(t) 2^{\frac{k}{2}} \Bigr) c_j(t)  2^{\frac{j}{2}}  \bigl( B'(t) \bigr)^{\frac12} \| (\mathrm{Id}-\dSv_{-5})  w(t)\|_{\dH^{0,-\f12}}^\f12\\
& \times \|(\mathrm{Id}- \dSv_{-5}) \nabla_\h w(t) \|_{\dH^{0,-\f12}}^\f12   \|  w(t)\|_{\dHh^{0,-\f12}}^\f12\|  \nabla_\h w(t) \|_{\dHh^{0,-\f12}}^\f12.
\end{align*}
Similar  to the derivation of \eqref{eq5.33}, one has
\begin{equation}\label{eq5.42}
  \begin{aligned}
    \|(\mathrm{Id}- \dSv_{-5})  w(t) \|_{\dH^{0,-\f12}}
    &\leq \|  w(t) \|_{\dHh^{0,-\f12}} +C\sum_{j=-5}^{-1} 2^{-\f{j}2} \|\dDv_j  w(t) \|_{L^2}\\
    &\leq \|   w(t) \|_{\dHh^{0,-\f12}} +C \| w(t) \|_{\dBl^{0,\f12}}\leq C \| w(t) \|_{X},
  \end{aligned}
\end{equation}
we thus obtain
\begin{equation}\label{eq5.43}
\begin{aligned}
  \big| \bigl( \dDv_j (\dTv_{u^3} \p_3 w) \mid \dDv_j w \bigr) \big| \lesssim &c_j^2(t)  2^{j}  \bigl( \B'(t) \bigr)^{\frac12} \|  \nabla_\h w(t) \|_{X}\|  w(t)\|_{X}.
\end{aligned}
\end{equation}

Along the same line, we get, by using Lemma \ref{lemBern}, $\p_3 u^3=-\dive_\h u^\h$, \eqref{eq5.20} (with $v$ being replaced by $u$) and \eqref{interpolation inequality}, that
\begin{align*}
  \big| \bigl( &\dDv_j (\dTv_{\p_3 w} u^3) \mid \dDv_j w \bigr) \big|  \\
\lesssim &\sum_{|k-j|\leq 4}   \| \dSv_{k-1} \p_3 w(t) \|_{L^4_{\rm h}(L^\infty_{\rm v})} \| \dDv_k u^3(t) \|_{L^2} \| \dDv_j w(t) \|_{L^4_{\rm h}(L^2_{\rm v})} \\
\lesssim &\sum_{|k-j|\leq 4}  \| \dSv_{k-1}  w(t) \|_{L^4_{\rm h}(L^\infty_{\rm v})} \| \dDv_k \dive_\h u^\h(t) \|_{L^2} \| \dDv_j w(t) \|_{L^4_{\rm h}(L^2_{\rm v})} \\
\lesssim &\sum_{|k-j|\leq 4} d_k(t) 2^{-\f{k}2} \bigl(\B'(\tau)\bigr)^\f12\| \dSv_{k-1}  w(t) \|_{L^4_{\rm h}(L^\infty_{\rm v})} \| \dDv_j w(t) \|_{L^2}^\f12 \|\nabla_\h \dDv_j w(t) \|_{L^2}^\f12 \\
\lesssim &\Bigl( \sum_{|k-j|\leq 4} d_k(t) 2^{\frac{k}{2}} \Bigr) c_j(t)  2^{\frac{j}{2}} \bigl( \B'(t) \bigr)^{\frac12} \sup_{k\geq -4}\Bigl(2^{-k}\| \dSv_{k-1} w(t) \|_{L^4_{\rm h}(L^\infty_{\rm v})}\Bigr) \\
&\qquad\qquad\qquad\qquad\qquad\qquad\qquad\qquad\times\|  w(t)\|_{\dHh^{0,-\f12}}^\f12\|  \nabla_\h w(t) \|_{\dHh^{0,-\f12}}^\f12,
\end{align*}
it follows from a similar derivation of \eqref{eq5.36}  that for $k\geq -4$,
\begin{equation}\label{eq5.44}
\begin{split}
\| \dSv_{k-1}  w(t) \|_{L^4_{\rm h}(L^\infty_{\rm v})}
& \lesssim \sum_{\ell\leq k-2} 2^{\frac{\ell}{2}} \|  \dDv_{\ell} w(t) \|_{L^4_\h(L^2_\v)} \\
&\lesssim \sum_{\ell\leq k-2} 2^{\frac{\ell}{2}} \| \dDv_{\ell}  w(t) \|_{L^2}^\f12 \| \dDv_{\ell} \nabla_\h w(t) \|_{L^2}^\f12  \\
& \lesssim \|w(t)\|_{\dBl^{0,\f12}}^\f12\|\nabla_\h w(t)\|_{\dBl^{0,\f12}}^\f12+\|w(t)\|_{\dHh^{0,-\f12}}^{\f12}\|\nabla_\h w(t)\|_{\dHh^{0,-\f12}}^{\f12}\sum_{\ell=0}^{\max(k-2,0)} c_\ell(t) 2^\ell \\
&\lesssim 2^{k} \Bigl(\|w(t)\|_{\dBl^{0,\f12}}^\f12\|\nabla_\h w(t)\|_{\dBl^{0,\f12}}^\f12+\|w(t)\|_{\dHh^{0,-\f12}}^{\f12}\|\nabla_\h w(t)\|_{\dHh^{0,-\f12}}^{\f12}\Bigr) ,
\end{split}
\end{equation}
which together with $\sum_{|k-j|\leq 4} d_k(t) 2^{\f{k}2}\lesssim c_j(t) 2^{\f{j}2}$ implies
\begin{equation} \label{eq5.45}
\begin{aligned}
  \big| \bigl( \dDv_j (\dTv_{\p_3 w} u^3) \mid \dDv_j w \bigr) \big| &\lesssim c_j^2(t)  2^{j} \bigl( \B'(t) \bigr)^{\frac12} \|  w(t) \|_{X} \|  \nabla_\h w(t) \|_{X}.
\end{aligned}
\end{equation}

Finally, for the remainder term with $j \geq 0$, we deduce from Lemma \ref{lemBern}, $\p_3u^3=-\dive_\h u^\h$ and \eqref{eq5.20} (with $v$  being replaced by $u$) that
\begin{equation}\label{eq5.45a}
  \begin{aligned}
  &\big| \bigl( \dDv_j \dRv(u^3, \p_3 w) \mid \dDv_j w \bigr) \big|  \\
&\lesssim   2^{\frac{j}{2}} \sum_{k \geq j-3} \| \wt{\D}^\v_k u^{3}(t) \|_{L^2} \| \dDv_k \p_3 w(t) \|_{L^4_\h(L^2_\v)} \| \dDv_j w(t) \|_{L^4_{\rm h}(L^2_{\rm v})}  \\
&\lesssim   2^{\frac{j}{2}} \sum_{k \geq j-3} \| \wt{\D}^\v_k \nabla_\h u^\h(t) \|_{L^2} \| \dDv_k w(t) \|_{L^4_\h(L^2_\v)} \| \dDv_j w(t) \|_{L^4_{\rm h}(L^2_{\rm v})}  \\
&\lesssim   2^{\frac{j}{2}} \sum_{k \geq j-3} d_k(t) 2^{-\f{k}2} \bigl(\B'(t)\bigr)^\f12 \| \dDv_k  w(t) \|_{L^2}^\f12\| \dDv_k \nabla_\h w(t) \|_{L^2}^\f12 \| \dDv_j w(t) \|_{L^2}^\f12\| \nabla_\h \dDv_j w(t) \|_{L^2}^\f12 .
\end{aligned}
\end{equation}
For $0\leq j\leq J(t)$ with $J(t)$ being determined by  \eqref{def:J(t)}, we have
\begin{align*}
  &2^{\frac{j}{2}} \sum_{k \geq j-3} d_k(t) 2^{-\f{k}2}  \| \dDv_k  w(t) \|_{L^2}^\f12\| \dDv_k \nabla_\h w(t) \|_{L^2}^\f12 \| \dDv_j w(t) \|_{L^2}^\f12\| \nabla_\h \dDv_j w(t) \|_{L^2}^\f12\\
  &\lesssim c_j(t) 2^j\Bigl(  \| w(t)\|_{\dBl^{0,\f12}}^\f12\|\nabla_\h w(t)\|_{\dBl^{0,\f12}}^\f12 \sum_{-3\leq k\leq -1} d_k(t) 2^{-k} \\
  &\qquad\qquad+  \| w(t)\|_{\dHh^{0,-\f12}}^\f12 \|\nabla_\h w(t)\|_{\dHh^{0,-\f12}}^\f12 \sum_{ k\geq0} d_k(t) c_k(t) \Bigr) \| w(t)\|_{\dHh^{0,-\f12}}^\f12 \|\nabla_\h w(t)\|_{\dHh^{0,-\f12}}^\f12,
  \end{align*}
  so that
  \begin{align*}
  &\sum_{j=0}^{J(t)}2^{-j}\big| \bigl( \dDv_j \dRv(u^3, \p_3 w) \mid \dDv_j w \bigr) \big|   \\
  &\lesssim \sqrt{J(t)}\bigl(\B'(t)\bigr)^\f12{\| w(t)\|_{X}^\f12\|\nabla_\h w(t)\|_{X}^\f12}\| w(t)\|_{\dHh^{0,-\f12}}^\f12 \|\nabla_\h w(t)\|_{\dHh^{0,-\f12}}^\f12.
\end{align*}

On the other hand,  for $j\geq J(t)$, we use $w=u-v$ to get
\begin{align*}
  &\sum_{k \geq j-3} d_k(t) 2^{-\f{k}2}  \| \dDv_k  w(t) \|_{L^2}^\f12\| \dDv_k \nabla_\h w(t) \|_{L^2}^\f12\\
  &\lesssim \| (u,v)(t)\|_{\dB^{0,\f12}}^\f12\|\nabla_\h (u,v)(t)\|_{\dB^{0,\f12}}^\f12 \sum_{k\geq j-3}d_k(t) 2^{-k}\\
  &\lesssim c_j(t) 2^{-J(t)} \bigl(\B'(\tau)\bigr)^\f14,
\end{align*}
so that for $j\geq J(t)$, there holds
\begin{align*}
  &2^{\frac{j}{2}} \sum_{k \geq j-3} d_k(t) 2^{-\f{k}2} { \bigl(B'(t)\bigr)^\f12 }\| \dDv_k  w(t) \|_{L^2}^\f12\| \dDv_k \nabla_\h w(t) \|_{L^2}^\f12 \| \dDv_j w(t) \|_{L^2}^\f12\| \nabla_\h \dDv_j w(t) \|_{L^2}^\f12\\
  &\lesssim c_j^2(t) 2^j 2^{-J(t)} {\bigl(\B'(t)\bigr)^\f34 } \| w(t)\|_{\dHh^{0,-\f12}}^\f12 \|\nabla_\h w(t)\|_{\dHh^{0,-\f12}}^\f12 \\
  &\lesssim c_j^2(t) 2^j {\bigl(\B'(t)\bigr)^\f34}   \| w(t)\|_{\dHh^{0,-\f12}}^\f32 \|\nabla_\h w(t)\|_{\dHh^{0,-\f12}}^\f12.
\end{align*}

By combining these two estimates for different regime of $j$, we conclude that 
\begin{align*}
 & \sum_{j\geq 0}2^{-j} \big| \bigl( \dDv_j \dRv(u^3, \p_3 w) \mid \dDv_j w \bigr) \big|  
\lesssim   \bigl(\B'(t)\bigr)^\f34   \| w(t)\|_{\dHh^{0,-\f12}}^\f32 \|\nabla_\h w(t)\|_{\dHh^{0,-\f12}}^\f12  \\
&\qquad+\bigl(\B'(t)\bigr)^\f12 \bigl(-\ln\|w(t)\|_{\dHh^{0,-\f12}}^2\bigr)^\f12\|w(t)\|_{\dHh^{0,-\f12}}^\f12{\|w(t)\|_X^\f12\|\nabla_\h w(t)\|_{X}} .
\end{align*}
Notice that
\beq \label{P2t}
\Phi_2(\tau)\eqdefa-\tau\ln{\tau^2}\quad\mbox{is an increasing function of }\ \ \tau \ \ \mbox{if} \ \ 0<\tau<2^{-10},
\eeq
we thus obtain
\begin{equation}\label{eq5.46}
\begin{aligned}
 & \sum_{j\geq 0}2^{-j} \big| \bigl( \dDv_j \dRv(u^3, \p_3 w) \mid \dDv_j w \bigr) \big|  
\lesssim   \bigl(\B'(t)\bigr)^\f34   \| w(t)\|_{X}^\f32 \|\nabla_\h w(t)\|_{X}^\f12  \\
&\qquad\qquad\qquad\qquad\qquad\qquad+\bigl(\B'(t)\bigr)^\f12 \bigl(-\ln\|w(t)\|_{X}^2\bigr)^\f12\|w(t)\|_{X}\|\nabla_\h w(t)\|_{X}.
\end{aligned}
\end{equation}

By summarizing the estimates \eqref{eq5.43}, \eqref{eq5.45}, and \eqref{eq5.46}, we conclude the proof of  \eqref{eq5.47}. \end{proof}

\begin{proof} [Proof of Lemma \ref{S4lem7}]
We first get, by  applying Bony's decomposition \eqref{homo bony v} to  decompose $w^{\rm h} \cdot \nabla_\h v$  as in \eqref{S4eq5a}.

Considering the support properties to the Fourier transform of the terms in $\dTv_{w^{\rm h}} \nabla_\h v$, we  get, by applying \eqref{eq5.20} and \eqref{interpolation inequality}, that for $j\geq 0$,
\begin{align*}
  \big| \bigl( &\dDv_j (\dTv_{w^{\rm h}} \nabla_\h v) \mid \dDv_j w \bigr) \big| \,  \\
\lesssim &\sum_{|k-j|\leq 4} \|\dSv_{k-1}w^\h(t)\|_{L^4_\h(L^\oo_\v)} \| \dDv_k \nabla_\h v(t) \|_{L^2} \| \dDv_j w(t) \|_{L^2}^{\frac12} \| \nabla_\h \dDv_j w(t) \|_{L^2}^{\frac12}  \\
\lesssim &\sum_{|k-j|\leq 4} d_k(t) 2^{-\f{k}2} \bigl( \B'(t) \bigr)^{\frac12} \|\dSv_{k-1}w(t)\|_{L^4_\h(L^\oo_\v)}\| \dDv_j w(t) \|_{L^2}^{\frac12} \| \nabla_\h \dDv_j w(t) \|_{L^2}^{\frac12}\\
\lesssim &\Bigl( \sum_{|k-j|\leq 4} d_k(t) 2^{\frac{k}{2}} \Bigr) c_j(t)  2^{\frac{j}{2}} \bigl( \B'(t) \bigr)^{\frac12} \sup_{k\geq -4}\Bigl(2^{-k}\| \dSv_{k-1} w(t) \|_{L^4_{\rm h}(L^\infty_{\rm v})}\Bigr)\\
&\qquad\qquad\qquad\qquad\qquad\qquad\qquad\qquad\times \|  w(t)\|_{\dHh^{0,-\f12}}^\f12\|  \nabla_\h w(t) \|_{\dHh^{0,-\f12}}^\f12,
\end{align*}
which together with \eqref{eq5.44} implies
\begin{equation}\label{eq5.48}
\begin{aligned}
  \big| \bigl( \dDv_j (\dTv_{w^{\rm h}} \nabla_\h v) \mid \dDv_j w \bigr) \big|  &\lesssim c_j^2(t)  2^{j} \bigl( \B'(t) \bigr)^{\frac12} \|  w(t) \|_{X} \|  \nabla_\h w(t) \|_{X}. \end{aligned}
\end{equation}

Whereas by applying \eqref{eq5.22} and \eqref{interpolation inequality}, we find
\begin{align*}
 \big| \bigl( &\dDv_j (\dTv_{\nabla_\h v} w^{\rm h}) \mid \dDv_j w \bigr) \big|  \\
\lesssim &\sum_{|k-j|\leq 4}  \| \dSv_{k-1} \nabla_\h v(t) \|_{L^2_{\rm h}(L^\infty_{\rm v})} \| \dDv_k w^{\rm h}(t) \|_{L^4_{\rm h}(L^2_{\rm v})} \| \dDv_j w(t) \|_{L^4_{\rm h}(L^2_{\rm v})}  \\
\lesssim &\sum_{|k-j|\leq 4}  \bigl(\B'(t)\bigr)^\f12\| \dDv_{k}  w(t) \|_{L^2}^\f12\| \dDv_{k} \nabla_\h w(t) \|_{L^2}^\f12 \| \dDv_j w(t) \|_{L^2}^\f12 \|\nabla_\h \dDv_j w(t) \|_{L^2}^\f12 \\
\lesssim &\Bigl( \sum_{|k-j|\leq 4} c_k(t) 2^{\frac{k}{2}} \Bigr) c_j (t) 2^{\frac{j}{2}}  \bigl( \B'(t) \bigr)^{\frac12} \| (\mathrm{Id}-\dSv_{-5})  w(t)\|_{\dH^{0,-\f12}}^\f12  \|(\mathrm{Id}- \dSv_{-5}) \nabla_\h w(t) \|_{\dH^{0,-\f12}}^\f12\\
&\qquad \times   \|  w(t)\|_{\dHh^{0,-\f12}}^\f12\|  \nabla_\h w(t) \|_{\dHh^{0,-\f12}}^\f12,
\end{align*}
from which, \eqref{eq5.33} and \eqref{eq5.42}, we infer
\begin{equation} \label{eq5.50}
\begin{aligned}
  \big| \bigl( \dDv_j (\dTv_{\nabla_\h v} w^{\rm h}) \mid \dDv_j w \bigr) \big| \lesssim &c_j^2(t)  2^{j}  \bigl( \B'(t) \bigr)^{\frac12} \|  \nabla_\h w(t) \|_{X}  \|  w(t)\|_{X}.
\end{aligned}
\end{equation}

Finally, for the remainder term, we deduce from Lemma \ref{lemBern}, \eqref{eq5.20} and \eqref{interpolation inequality} that
\begin{align*}
 \big| \bigl( &\dDv_j \dRv(w^{\rm h}, \nabla_\h v) \mid \dDv_j w \bigr) \big|  \\
\lesssim &  2^{\frac{j}{2}} \sum_{k \geq j-3} \| \dDv_k w^{\rm h}(t) \|_{L^4_{\rm h}(L^2_{\rm v})} \| \wt{\D}^\v_k \nabla_\h v(t) \|_{L^2} \| \dDv_j w(t) \|_{L^4_{\rm h}(L^2_{\rm v})} \\
\lesssim &  2^{\frac{j}{2}} \sum_{k \geq j-3} d_k(t) 2^{-\f{k}2} \bigl(\B'(t)\bigr)^\f12 \| \dDv_k  w(t) \|_{L^2}^\f12\| \dDv_k \nabla_\h w(t) \|_{L^2}^\f12 \| \dDv_j w(t) \|_{L^2}^\f12\| \nabla_\h \dDv_j w(t) \|_{L^2}^\f12 .
\end{align*}
Notice that the above term is exactly the same  as in the last line of \eqref{eq5.45a},  we get, by a simialr derivation of  \eqref{eq5.46}, that 
\begin{equation}\label{eq5.51}
\begin{aligned}
 &  \sum_{j\geq 0}2^{-j}\big| \bigl( \dDv_j \dRv(w^\h, \nabla_\h v) \mid \dDv_j w \bigr) \big|  
\lesssim  \bigl(\B'(t)\bigr)^\f34   \| w(t)\|_{X}^\f32 \|\nabla_\h w(t)\|_{X}^\f12  \\
&\qquad\qquad\qquad\qquad\qquad\qquad+\bigl(\B'(t)\bigr)^\f12 \bigl(-\ln\|w(t)\|_{X}^2\bigr)^\f12\|w(t)\|_{X}\|\nabla_\h w(t)\|_{X}.
\end{aligned}
\end{equation}

By summarizing the estimates \eqref{eq5.48}, \eqref{eq5.50}, and \eqref{eq5.51}, we complete the proof of \eqref{eq5.52}. \end{proof}

  \begin{proof}[Proof of Lemma \ref{S4lem8}]
By applying Bony's decomposition \eqref{homo bony v}, we decompose $w^3 \p_3 v$ as in \eqref{S4eq6a}.

For the high frequency part of the paraproduct $\dTv_{w^3} \p_3 v$, we shall encounter  the difficulty of lossing one vertical derivative,  so that the trick by introducing $J(t)$ is needed once again. Indeed  we deduce from Lemma \ref{lemBern} and \eqref{sharp interpolation} that for any $p\geq 4$,
\begin{align*}
  \| \dDv_k \p_3v(t) \|_{L^p_{\rm h}(L^2_{\rm v})} &\lesssim \sqrt{p} 2^k \|\dDv_k v(t)\|_{L^2}^\f2p \|\dDv_k \nabla_\h v(t)\|_{L^2}^{1-\f2p}\\
  &\lesssim \sqrt{p}d_k(t) 2^{\f{k}2}\|v(t)\|_{\dB^{0,\f12}}^{\f2p}\|\nabla_\h v(t)\|_{\dB^{0,\f12}}^{1-\f2p}\\
  &\lesssim \sqrt{p}d_k(t) 2^{\f{k}2} \bigl(\B'(t)\bigr)^{\f12-\f1p},
\end{align*}
from which and \eqref{interpolation inequality}, we infer
\begin{equation}\label{eq5.53}
  \begin{aligned}
|\bigl( &\dDv_j (T^{\rm v}_{w^3} \partial_3 v) \mid  \dDv_j w \bigr)| \\
\lesssim &\sum_{|k-j|\leq 4} \|\dSv_{k-1} w^3(t)\|_{L^2_\h(L^\oo_\v)} \inf_{p\in [4,\oo[} \Bigl(\| \dDv_k \p_3v(t) \|_{L^p_{\rm h}(L^2_{\rm v})} \|  \dDv_j w(t)\|_{L^{\f{2p}{p-2}}_{\rm h}(L^2_{\rm v})}\Bigr)\\
\lesssim &\sum_{|k-j|\leq 4} d_k(t) 2^{\f{k}2} \| \dSv_k {w^3(t)} \|_{L^2_\h(L^\oo_\h)}\\
&\qquad\qquad\qquad\times\inf_{p\in [4,\oo[} \Bigl(\sqrt{p}\bigl( B'(t) \bigr)^{\f12-\f1p}  \|  \dDv_j w(t) \|_{L^2}^{1-\f2p} \| \nabla_\h \dDv_j w(t) \|_{L^2}^\f2p\Bigr)\\
\lesssim& \Bigl(\sum_{|k-j|\leq 4} d_k(t) 2^{\f{k}2}\Bigr) c_j(t) 2^{\f{j}2}  \sup_{k\geq -5}\| \dSv_k {w^3(t)} \|_{L^2_\h(L^\oo_\h)}\\
&\qquad\qquad\qquad\times\inf_{p\in [4,\oo[} \Bigl(\sqrt{p}\bigl( \B'(t) \bigr)^{\f12-\f1p}  \|   w(t) \|_{\dHh^{0,-\f12}}^{1-\f2p} \| \nabla_\h  w(t) \|_{\dHh^{0,-\f12}}^\f2p\Bigr).
\end{aligned}
\end{equation}
Let us  focus on the term  $\dSv_k {w^3 }$. Recalling the definition of $J(t)$ defined in \eqref{def:J(t)}. In case  $k\leq J(t)+2$, we get, by applying Lemma \ref{lemBern} and $\p_3 w^3=-\dive_\h w^\h,$ that
\begin{align*}
  \|\dSv_{k-1} w^3(t)\|_{L^2_\h(L^\oo_\v)} &\lesssim  \sum_{\ell\leq k-2} 2^{\f{\ell}2} \|\dot{\Delta}^\v_\ell w^3(t)\|_{L^2} \\
  &\lesssim  \sum_{\ell\leq 0} 2^{\f{\ell}2} \|\dot{\Delta}^\v_\ell w^3(t)\|_{L^2}+ \sum_{0\leq\ell\leq \max(k,-1)} 2^{-\f{\ell}2} \|\dot{\Delta}^\v_\ell \p_3 w^3(t) \|_{L^2} \\
  &\lesssim \|w(t)\|_{\dBl^{0,\f12}} \sum_{\ell\leq-1} d_\ell(t) +\|\nabla_\h w(t)\|_{\dHh^{0,-\f12}}\sum_{\ell=0}^{J(t)}  c_\ell(t)\\
  &\lesssim \|w(t)\|_{\dBl^{0,\f12}}+\sqrt{J(t)}\|\nabla_\h w(t)\|_{\dHh^{0,-\f12}}.
\end{align*}
In case $k\geq J(t)+3$, we  get, by using  $\p_3 w^3=-\dive_\h w^\h$ and $w=u-v,$ that
\begin{align*}
  \|(\dSv_{k-1}-\dSv_{J(t)})w^3(t)&\|_{L^2_\h(L^\oo_\v)} \lesssim \sum_{\ell=J(t)+1}^{k-2} 2^{\f{\ell}2}\|\dot{\Delta}^\v_\ell w^3(t)\|_{L^2}\lesssim \sum_{\ell=J(t)+1}^{k-2} 2^{-\f{\ell}2}\|\dot{\Delta}^\v_\ell \nabla_\h w^\h(t)\|_{L^2}\\
  &\lesssim \sum_{\ell=J(t)+1}^{k-2} 2^{-\f{\ell}2}\|\dot{\Delta}^\v_\ell \nabla_\h (u,v)(t)\|_{L^2}\lesssim \|\nabla_\h (u,v)(t)\|_{\dB^{0,\f12}}\sum_{\ell=J(t)+1}^{k-2} d_\ell(t)2^{-\ell} \\
  &\lesssim \bigl(\B'(t)\bigr)^\f12 2^{-J(t)} \lesssim \bigl(\B'(t)\bigr)^\f12 \|w(t)\|_{\dHh^{0,-\f12}}.
\end{align*}

By combining the above two cases and using the fact:  $1\leq {\B'(t)}, $ we conclude that for all $k\geq -5$,
\begin{equation}\notag
  \begin{aligned}
  \|\dSv_{k-1} w^3(t)\|_{L^2_\h(L^\oo_\v)} \lesssim &\bigl(\B'(t)\bigr)^\f12  \|w(t)\|_{X}+\sqrt{J(t)}\|\nabla_\h w(t)\|_{{X}}.
\end{aligned}
\end{equation}
By substituting the above estimate into \eqref{eq5.53}, we find
\begin{equation}\label{eq5.56a}
\begin{aligned}
  \big|& \bigl( \dDv_j (\dTv_{w^3} \p_3 v) \mid \dDv_j w \bigr) \big| \lesssim c_j^2(t)  2^{j}  \bigl(\B'(t)\bigr)^\f34 \|w(t)\|_{X}^\f32 \|\nabla_\h w(t)\|_{{X}}^\f12\\
&+c_j^2(t)  2^{j} \|\nabla_\h w(t)\|_{{X}} \inf_{p\in [4,\oo[} \Bigl(\sqrt{J(t)p}\bigl( \B'(t) \bigr)^{\f12-\f1p}  \|   w(t) \|_{\dHh^{0,-\f12}}^{1-\f2p} \| \nabla_\h  w(t) \|_{\dHh^{0,-\f12}}^\f2p\Bigr),
\end{aligned}
\end{equation}
where we take $p=4$ for the first part without $J(t)$.
Then we get, by  taking the special choice $p=\ln J(t)+2$ in the second line of \eqref{eq5.56a} and using Young's inequality, that
\begin{align*}
 & \sqrt{J(t)p}\bigl( \B'(t) \bigr)^{\f12-\f1p}  \|   w(t) \|_{\dHh^{0,-\f12}}^{1-\f2p} \| \nabla_\h  w(t) \|_{\dHh^{0,-\f12}}^\f2p \\
  &\lesssim  \Bigl( \B'(t)\bigl(J(t)p\bigr)^{\f{p}{p-2}}\|w(t)\|_{\dHh^{0,-\f12}}^2\Bigr)^{\f12-\f1p} \|\nabla_\h w(t)\|_{\dHh^{0,-\f12}}^\f2p\\
  &\lesssim \Bigl( \B'(t)J(t)\ln J(t)\|w(t)\|_{\dHh^{0,-\f12}}^2\Bigr)^{\f12-\f1p} \|\nabla_\h w(t)\|_{\dHh^{0,-\f12}}^\f2p\\
  &\lesssim  \sum_{\ka=1}^2 \Bigl( \B'(t)\Bigl(-\ln\|w(t)\|_{\dHh^{0,-\f12}}^2\Bigr)\ln \Bigl(-\ln\|w(t)\|_{\dHh^{0,-\f12}}^2\Bigr)\|w(t)\|_{\dHh^{0,-\f12}}^2\Bigr)^{\f\kappa4} \|\nabla_\h w(t)\|_{\dHh^{0,-\f12}}^{1-\f\kappa2},
\end{align*}
from which, \eqref{P1t} and \eqref{eq5.56a},  we deduce   that 
\begin{equation}\label{eq5.56}
\begin{aligned}
  \big| \bigl( \dDv_j (\dTv_{w^3} \p_3 v) \mid \dDv_j w \bigr) \big| \lesssim & c_j^2(t)  2^{j} \sum_{\ka=1}^3 \|\nabla_\h w(t)\|_{{X}}^{2-\f\kappa2}\\
  &\times \Bigl( \B'(t)\bigl(-\ln\|w(t)\|_{X}^2\bigr)\ln \bigl(-\ln\|w(t)\|_{X}^2\bigr)\|w(t)\|_{X}^2\Bigr)^{\f\kappa4} .
\end{aligned}
\end{equation}

Whereas we get, by using Lemma \ref{lemBern} and $\p_3 w^3=-\dive_\h w^\h,$ 
and then applying \eqref{eq5.4} (with $u$ being  replaced by $v$) and \eqref{interpolation inequality}, that 
\begin{align*}
  \big| \bigl(& \dDv_j (\dTv_{\p_3 v} w^3) \mid \dDv_j w \bigr) \big|  \\
\lesssim &\sum_{|k-j|\leq 4}   \| \dSv_{k-1} \p_3 v(t) \|_{L^4_{\rm h}(L^\infty_{\rm v})} \| \dDv_k w^3(t) \|_{L^2} \| \dDv_j w(t) \|_{L^4_{\rm h}(L^2_{\rm v})} \\
\lesssim &\sum_{|k-j|\leq 4}  \| \dSv_{k-1}  v(t) \|_{L^4_{\rm h}(L^\infty_{\rm v})} \| \dDv_k \dive_\h w^\h(t) \|_{L^2} \| \dDv_j w(t) \|_{L^4_{\rm h}(L^2_{\rm v})} \\
\lesssim &\sum_{|k-j|\leq 4}  \bigl(\B'(t)\bigr)^\f14\| \dDv_{k} \nabla_\h w(t) \|_{L^2} \| \dDv_j w(t) \|_{L^2}^\f12 \|\nabla_\h \dDv_j w(t) \|_{L^2}^\f12 \\
\lesssim &\Bigl( \sum_{|k-j|\leq 4} d_k(t) 2^{\frac{k}{2}} \Bigr) c_j(t)  2^{\frac{j}{2}} \bigl( \B'(t) \bigr)^{\frac14} \|(\mathrm{Id}-\dSv_{-5}) \nabla_\h w(t) \|_{\dH^{0,-\f12}} \|  w(t)\|_{\dHh^{0,-\f12}}^\f12\|  \nabla_\h w(t) \|_{\dHh^{0,-\f12}}^\f12,
\end{align*}
which together with \eqref{eq5.33} implies
\begin{equation} \label{eq5.57}
\begin{aligned}
  \big| \bigl( \dDv_j (\dTv_{\p_3 v} w^3) \mid \dDv_j w \bigr) \big| \lesssim & c_j^2(t)  2^{j} \bigl( \B'(t) \bigr)^{\frac14} \|  w(t) \|_{{X}}^\f12\|  \nabla_\h w(t) \|_{X}^\f32.
\end{aligned}
\end{equation}

Finally, for the remainder term, we deduce from Lemma \ref{lemBern}, $\p_3u^3=-\dive_\h u^\h$, \eqref{eq5.38a} (with $u$ being replaced by $v$) and \eqref{eq5.38b} that
  \begin{align*}
 \big| \bigl( &\dDv_j \dRv(w^3, \p_3 v) \mid \dDv_j w \bigr) \big|  \\
\lesssim &  2^{\frac{j}{2}} \sum_{k \geq j-3} \| \dDv_k w^{3}(t) \|_{L^2} \inf_{p\in [4,\oo[}\Bigl(\| \wt{\D}^\v_k \p_3 v(t) \|_{L^p_\h(L^2_\v)} \| \dDv_j w(t) \|_{L^\f{2p}{p-2}_{\rm h}(L^2_{\rm v})} \Bigr) \\
\lesssim &  2^{\frac{j}{2}} \sum_{k \geq j-3} \| \dDv_k \nabla_\h w^\h(t) \|_{L^2} \inf_{p\in [4,\oo[}\Bigl( \| \wt{\D}^\v_k v(t) \|_{L^p_\h(L^2_\v)} \| \dDv_j w(t) \|_{L^\f{2p}{p-2}_{\rm h}(L^2_{\rm v})}\Bigr)  \\
\lesssim &  2^{\frac{j}{2}} \sum_{k \geq j-3} d_k(t) 2^{-\f{k}2} \inf_{p\in [4,\oo[}\Bigl(\sqrt{p}\bigl(\B'(t)\bigr)^{\f12-\f1p}\| \dDv_k \nabla_\h w(t) \|_{L^2} \| \dDv_j w(t) \|_{L^2}^{1-\f2p}\| \nabla_\h \dDv_j w(t) \|_{L^2}^{\f2p} \Bigr).
\end{align*}
Observing that the last line coincides with \eqref{eq5.38}, we get, by a similar  derivation of  \eqref{eq5.39}, that
\begin{equation}\label{eq5.58}
\begin{aligned}
 &\sum_{j\geq 0}2^{-j} \big| \bigl( \dDv_j \dRv(w^3, \p_3 v) \mid \dDv_j w \bigr) \big|  
\lesssim  \sum_{\kappa=1}^3\|\nabla_\h w(t)\|_{X}^{2-\f\kappa2}\\
  &\qquad\qquad\qquad\times\Bigl(\B'(t)\bigl(-\ln \|w(t)\|_{X}^2\bigr)\ln\bigl(-\ln \|w(t)\|_{X}^2\bigr)\| w(t)\|_{X}^2\Bigr)^{\f\kappa4}.
\end{aligned}
\end{equation}

By summarizing the estimates \eqref{eq5.56}, \eqref{eq5.57}, and \eqref{eq5.58}, we finish the proof of \eqref{eq5.59}. \end{proof}

We are now in a position to complete the proof of Proposition \ref{prop1.6}.

\begin{proof}[Proof of Proposition \ref{prop1.6}] In view of \eqref{S1eq1a} and \eqref{S1eq2a}, we get,   by summing up  \eqref{eq:prop5.1} and \eqref{eq:prop5.2}, that
  \begin{equation}\label{eq5.64}
\begin{aligned}
    &\|\fg_1 w\|_{\wt L^\oo_T(X)}^2+\|\fg_1\nabla_\h w\|_{\wt L^2_T(X)}^2+C_1\|\bigl(\B'\bigr)^\f12
    \fg_1 w{\|}_{\wt L^2_T(X)}^2 \\
& \leq {C} \|w_0\|_{X}^2+C \sum_{\kappa=1}^3  \| \fg_1 \nabla_\h w \|_{\widetilde L^2_T(X)}^{\f\kappa2}  \|\bigl(\B'\bigr)^\f12\fg_1 w\|_{\wt L^2_T(X)}^{2-\f{\kappa}2}+ C\sum_{\kappa=1}^3 \|\fg_1 \nabla_\h w\|_{\wt L^2_T(X)}^{2-\f\kappa2} \\
&\qquad\quad\times \Bigl(\int_0^T \B'(t)\Bigl( -\ln\|\fg_1w\|_{\wt L^\oo_t(X)}^2\Bigr)  \ln\Bigl( -\ln\|\fg_1w{\|}_{\wt L^\oo_t(X)}^2\Bigr)\|\fg_1w\|_{\wt L^\oo_t(X)}^2dt\Bigr)^\f{\kappa}4 .
\end{aligned}
\end{equation}

By applying Young's inequality, we find
\begin{align*}
  C &\sum_{\kappa=1}^3  \| \fg_1 \nabla_\h w \|_{\widetilde L^2_T(X)}^{\f\kappa2}  \|\bigl(\B'\bigr)^\f12\fg_1  w\|_{\wt L^2_T(X)}^{2-\f{\kappa}2}\\
&+ C\sum_{\kappa=1}^3 \|\fg_1\nabla_\h w\|_{\wt L^2_T(X)}^{2-\f\kappa2} \Bigl(\int_0^T \B'(t)\Bigl( -\ln\|\fg_1 w\|_{\wt L^\oo_t(X)}^2\Bigr)  \\
  &\qquad\times \ln\Bigl( -\ln\|\fg_1 w\|_{\wt L^\oo_t(X)}^2\Bigr)\|\fg_1w\|_{\wt L^\oo_t(X)}^2\,dt\Bigr)^\f{\kappa}4 \\
  \leq &\f12 \|\fg_1 \nabla_\h w\|_{\wt L^2_T(X)}^2 +1000C  \|\bigl(\B'\bigr)^\f12\fg_1 w\|_{\wt L^2_T(X)}^2 \\
  &+1000C \int_0^T \B'(t)\Bigl( -\ln\|\fg_1 w\|_{\wt L^\oo_t(X)}^2\Bigr)  \ln\Bigl( -\ln\|\fg_1w\|_{\wt L^\oo_t(X)}^2\Bigr)\|\fg_1 w\|_{\wt L^\oo_t(X)}^2\,dt. 
\end{align*}

By substituting the above inequality into \eqref{eq5.64} and taking $C_1$ larger than $1000C$, we conclude that 
\begin{align*}
  &\|\fg_1w\|_{\wt L^\oo_T(X)}^2+\|\fg_1\nabla_\h w{\|}_{\wt L^2_T(X)}^2\\
&\leq C \|w_0\|_{X}^2+C \int_0^T \B'(t)\Bigl( -\ln\|\fg_1 w\|_{\wt L^\oo_t(X)}^2\Bigr) \ln\Bigl( -\ln\|\fg_1 w\|_{\wt L^\oo_t(X)}^2\Bigr)\|\fg_1 w\|_{\wt L^\oo_t(X)}^2\,dt,
\end{align*}
which is the $t=T$ case of \eqref{eq:prop1.6}. This finishes the proof of Proposition \ref{prop1.6}.
\end{proof}
\smallskip

\appendix

\setcounter{equation}{0}
\section{Tool box on Littlewood–Paley theory}\label{App}

For the convenience of readers, here we collect some basic facts on Littlewood–Paley theory in this {appendix}. We first recall the following anisotropic Bernstein inequalities from \cite{CZ07, Paicu}:
\begin{lem}\label{lemBern}
{\sl Let ${\bf B}_{\rm v}$ be a ball of $\mathbb{R}_{\rm v}$, and $\mathcal{C}_{\rm v}$ a ring of $\mathbb{R}_{\rm v}$; let $1\leq p\leq \infty$ and $1\leq q_2\leq q_1\leq \infty$. Then there holds
\beno
\begin{aligned}
\mbox{if}\ \ \operatorname{Supp} \widehat{a} \subset 2^\ell{\bf B}_{\rm v}&\Rightarrow
\|\partial_{x_3}^N a\|_{L^{p}_{\rm h}(L^{q_1}_{\rm v})}
\lesssim 2^{\ell\left(N+\frac1{q_2}-\frac1{q_1}\right)} \|
a\|_{L^{p}_{\rm h}(L^{q_2}_{\rm v})};\\
\mbox{if}\ \ \ \operatorname{Supp} \widehat{a} \subset 2^\ell\mathcal{C}_{\rm v}&\Rightarrow
\|a\|_{L^{p}_{\rm h}(L^{q_1}_{\rm v})} \lesssim 2^{-\ell N}
\|\partial_{x_3}^N a\|_{L^{p}_{\rm h}(L^{q_1}_{\rm v})}.
\end{aligned}
\eeno
}
\end{lem}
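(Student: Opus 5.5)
The plan is to reduce both inequalities to one-dimensional convolution estimates in the vertical variable $x_3$, applied fiberwise for each fixed $x_\h\in\R^2$. The $L^p_\h$ norm is taken only at the end. This works because all the operators involved are Fourier multipliers in $\xi_3$ alone, so they commute with freezing $x_\h$.

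\emph{Ball case.} First fix a function $\theta\in C_c^\infty(\R)$ with $\theta\equiv1$ on ${\bf B}_\v$. If $\operatorname{Supp}\widehat a\subset 2^\ell{\bf B}_\v$, then $\widehat a(\xi)=\theta(2^{-\ell}\xi_3)\widehat a(\xi)$. Hence $a=h_\ell\ast_\v a$, where $h=\mathcal F^{-1}_{\xi_3}\theta\in\mathcal S(\R)$, $h_\ell(x_3)=2^\ell h(2^\ell x_3)$, and $\ast_\v$ denotes convolution in $x_3$ only. Differentiating gives $\p_{x_3}^N a=(\p^N h_\ell)\ast_\v a$. Choose $r\in[1,\infty]$ with $1+\frac1{q_1}=\frac1r+\frac1{q_2}$; this is admissible since $q_2\le q_1$. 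For each fixed $x_\h$, Young's inequality in $x_3$ gives
\[
\|\p_{x_3}^N a(x_\h,\cdot)\|_{L^{q_1}_\v}\le \|\p^N h_\ell\|_{L^r(\R)}\,\|a(x_\h,\cdot)\|_{L^{q_2}_\v}.
\]
Scaling yields $\|\p^N h_\ell\|_{L^r}=2^{\ell(N+1-\frac1r)}\|\p^N h\|_{L^r}$, and $1-\frac1r=\frac1{q_2}-\frac1{q_1}$. Taking the $L^p$ norm in $x_\h$ then gives the first inequality, with constant $\|\p^N h\|_{L^r}$, which is independent of $\ell$ and of $a$.

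\emph{Ring case.} Next fix $\widetilde\varphi\in C_c^\infty(\R\setminus\{0\})$ with $\widetilde\varphi\equiv1$ on $\mathcal C_\v$, and set $\widehat g(\xi_3)\eqdefa\widetilde\varphi(\xi_3)(i\xi_3)^{-N}$. This function is smooth and compactly supported, because its support avoids the origin; so $g\in\mathcal S(\R)\subset L^1(\R)$. If $\operatorname{Supp}\widehat a\subset2^\ell\mathcal C_\v$, then
\[
\widehat a=\widetilde\varphi(2^{-\ell}\xi_3)\widehat a=2^{-\ell N}\,\widehat g(2^{-\ell}\xi_3)\,(i\xi_3)^N\widehat a .
\]
Hence $a=2^{-\ell N}g_\ell\ast_\v\p_{x_3}^N a$, where $g_\ell(x_3)=2^\ell g(2^\ell x_3)$ and $\|g_\ell\|_{L^1}=\|g\|_{L^1}$. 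Young's inequality with the $L^1$ kernel, applied fiberwise in $L^{q_1}_\v$ and then followed by the $L^p_\h$ norm, gives the second inequality.

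The only point needing care is justifying that the vertical multipliers act fiberwise, i.e.\ that $a(x_\h,\cdot)$ makes sense and $a=h_\ell\ast_\v a$ holds for a.e.\ $x_\h$. I would handle this by first proving the estimates for $a\in\mathcal S$ and then arguing by density, or by noting that for $a$ in the relevant mixed Lebesgue spaces the convolution identity holds in $\mathcal S'$ and therefore a.e. Beyond this the argument is routine, and I do not expect a genuine obstacle.
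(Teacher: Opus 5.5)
The paper gives no proof of this lemma; it simply recalls it from \cite{CZ07, Paicu}. Your argument is exactly the standard proof behind that citation, and it is correct: you write $a$ (resp.\ $\p_{x_3}^N a$) as a vertical convolution with a rescaled Schwartz kernel, apply Young's inequality on each fiber with $x_\h$ fixed, and only then take the $L^p_\h$ norm. One harmless simplification is to take $\theta$ and $\widetilde\varphi$ equal to $1$ on a neighborhood of ${\bf B}_\v$ and $\mathcal C_\v$; then identities such as $\widehat a=\theta(2^{-\ell}\xi_3)\widehat a$ hold trivially in $\mathcal S'$.
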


To deal with the law of product in anisotropic Besov spaces, we shall constantly use Bony's decomposition (see \cite{Bo81}) in the vertical variable, which refers to
\begin{equation}\label{homo bony v}\begin{split}
 &ab = \dot{T}^{\mathrm{v}}_a b + \dot{T}^{\mathrm{v}}_b a + \dot{R}^{\mathrm{v}}(a,b) \quad \text{with } \dot{T}^{\mathrm{v}}_a b \eqdefa \sum_{\ell\in\mathbb{Z}} \dot{S}^{\mathrm{v}}_{\ell-1} a \, \dot{\Delta}^{\mathrm{v}}_\ell b,\\
& \dot{R}^{\mathrm{v}}(a,b) \eqdefa \sum_{\ell\in\mathbb{Z}} \dot{\Delta}^{\mathrm{v}}_\ell a \, \widetilde{\Delta}^{\mathrm{v}}_{\ell} b \quad \text{and} \quad \widetilde{\Delta}^{\mathrm{v}}_{\ell} b \eqdefa \sum_{j=\ell-1}^{\ell+1} \dot{\Delta}^{\mathrm{v}}_j b.
\end{split}\end{equation}

\section*{Acknowledgement}
N. Burq is supported by the European research Council (ERC) under the European Union’s Horizon 2020 research and innovation programme (Grant agreement 101097172 - GEOEDP).

  P. Zhang is partially  supported by National Key R$\&$D Program of China under grant 2021YFA1000800 and by National Natural Science Foundation of China under Grant 12421001, 12494542 and 12288201.

\section*{Declarations}

\subsection*{Conflict of interest} The authors declare that there are no conflicts of interest.

\subsection*{Data availability}
This article has no associated data.

\end{document}